\documentclass[11pt, twoside]{amsart}

\title{The Relative Monoidal Center and Tensor Products of Monoidal Categories}

\date{\today}
\author{Robert Laugwitz}
\address{Department of Mathematicss, Rutgers University,
Hill Center, 110 Frelinghuysen Road,
Piscataway, NJ 08854-8019}
\email{robert.laugwitz@rutgers.edu}
\urladdr{https://www.math.rutgers.edu/~rul2/}


\usepackage{comment}
\usepackage{microtype}
\usepackage{mathabx}
\usepackage{import}
\usepackage{amsmath}
\usepackage{amsfonts}
\usepackage{amsthm}
\usepackage{amssymb}
\usepackage{dsfont}
\usepackage[alphabetic, initials]{amsrefs}
\usepackage[english]{babel}
\usepackage{url}
\usepackage{graphicx}
\usepackage[
colorlinks=true,
linkcolor=black, 
anchorcolor=black,
citecolor=black,
urlcolor=black, 
]{hyperref}
\usepackage[all]{xy}
\usepackage{geometry}



\font\sc=rsfs10


\newcommand{\longrightleftarrows}{\resizebox{18pt}{7pt}{$\rightleftarrows$}}

\newcommand{\leftexp}[2]{{\vphantom{#2}}^{#1}{#2}}
\newcommand{\leftexpsub}[3]{{\vphantom{#3}}^{#1}_{#2}{#3}}
\newcommand{\adj}[4]{#1\colon #2~\longrightleftarrows~ #3\colon #4}

\newcommand{\op}[1]{\mathrm{#1}}
\newcommand{\oop}{\mathrm{op}}
\newcommand{\cop}{\mathrm{cop}}
\newcommand{\ov}[1]{\overline{#1}}


\newcommand{\lYD}[1]{\leftexpsub{#1}{#1}{\mathbf{YD}}}
\newcommand{\rYD}[1]{\mathbf{YD}^{#1}_{#1}}

\newcommand{\lmod}[1]{#1\text{--}\mathbf{Mod}}
\newcommand{\rmod}[1]{\mathbf{Mod}\text{--}#1}
\newcommand{\lrmod}[2]{#1\text{--}\mathbf{Mod}\text{--}#2}
\newcommand{\lrfmod}[2]{#1\text{--}\mathbf{Mod}^{\mathrm{fd}}\text{--}#2}
\newcommand{\llmod}[2]{#1\text{--}#2\text{--}\mathbf{Mod}}
\newcommand{\rrmod}[2]{\mathbf{Mod}\text{--}#1\text{--}#2}
\newcommand{\lcomod}[1]{#1\text{--}\mathbf{CoMod}}
\newcommand{\rcomod}[1]{\mathbf{CoMod}\text{--}#1}


\newcommand{\bal}[1]{#1\operatorname{-bal}}

\newcommand{\coev}{\operatorname{coev}}

\newcommand{\Cone}{\mathbf{coCone}_{\Bbbk}}

\newcommand{\Drin}{\operatorname{Drin}}
\newcommand{\ev}{\operatorname{ev}}

\newcommand{\Heis}{\operatorname{Heis}}

\newcommand{\Hopf}{\mathbf{Hopf}}
\newcommand{\Hom}{\operatorname{Hom}}
\newcommand{\cHom}{\mathbf{Hom}}
\newcommand{\ide}{\operatorname{Id}}

\newcommand{\Isom}{\mathbf{Isom}^{\otimes}}
\newcommand{\isomorph}{\stackrel{\sim}{\longrightarrow}}
\newcommand{\natisomorph}{\stackrel{\sim}{\Longrightarrow}}

\newcommand{\one}{\mathds{1}}

\newcommand{\reg}{\operatorname{reg}}

\newcommand{\triv}{\mathrm{triv}}


\newcommand{\Alg}{\mathbf{Alg}}
\newcommand{\BiAlg}{\mathbf{BiAlg}}
\newcommand{\BiMod}{\mathbf{BiMod}}

\newcommand{\Cat}{\mathbf{Cat}_{\Bbbk}}

\newcommand{\CoAlg}{\mathbf{CoAlg}}

\newcommand{\Fun}{\mathbf{Fun}_\Bbbk}
\newcommand{\Ends}{\mathbf{End}_\Bbbk}
\newcommand{\Homs}{\mathbf{Hom}_\Bbbk}

\newcommand{\MonCat}{\mathbf{MonCat}}

\newcommand{\Vect}{\mathbf{Vect}_\Bbbk}


\newcommand{\Ug}{U_q(\mathfrak{g})}


\providecommand{\fr}[1]{\mathfrak{#1}}

\providecommand{\op}[1]{\operatorname{#1}}

\newcommand{\mZ}{\mathbb{Z}}

\newcommand{\mF}{\mathbb{F}}

\newcommand{\cC}{\mathcal{C}}
\newcommand{\cD}{\mathcal{D}}
\newcommand{\cB}{\mathcal{B}}
\newcommand{\cE}{\mathcal{E}}

\newcommand{\cH}{\mathcal{H}}
\newcommand{\cO}{\mathcal{O}}
\newcommand{\cI}{\mathcal{I}}

\newcommand{\cT}{\mathcal{T}}
\newcommand{\cV}{\mathcal{V}}
\newcommand{\cW}{\mathcal{W}}
\newcommand{\cM}{\mathcal{C}}
\newcommand{\cN}{\mathcal{D}}

\newcommand{\cX}{\mathcal{X}}
\newcommand{\cZ}{\mathcal{Z}}

\newcommand{\rB}{\mathrm{B}}

\newcommand{\rF}{\mathrm{F}}
\newcommand{\rG}{\mathrm{G}}
\newcommand{\rH}{\mathrm{H}}
\newcommand{\rR}{\mathrm{R}}
\newcommand{\rT}{\mathrm{T}}

\newcommand{\bfD}{\mathbf{D}}




\newtheoremstyle{defstyle}
  {0.5cm}                   
  {0.5cm}                   
  {\normalfont}           
  {}     
  {\normalfont\bfseries}  
  {:}                     
  {0.3cm}              
  {\thmname{#1}\thmnumber{ #2}\thmnote{ (#3)}}

\numberwithin{equation}{section}

\newtheorem*{rep@theorem}{\rep@title}
\newcommand{\newreptheorem}[2]{%
\newenvironment{rep#1}[1]{%
 \def\rep@title{#2 \ref{##1}}%
 \begin{rep@theorem}}%
 {\end{rep@theorem}}}
\makeatother

\newtheorem{theorem}{Theorem}[section]

\newtheorem{proposition}[theorem]{Proposition}
\newreptheorem{proposition}{Proposition}
\newtheorem{corollary}[theorem]{Corollary}
\newreptheorem{corollary}{Corollary}
\newtheorem{lemma}[theorem]{Lemma}

\newtheorem{theorem*}{Theorem}
\newreptheorem{theorem}{Theorem}

\theoremstyle{definition}
\newtheorem{definition}[theorem]{Definition}

\newtheorem{assumption}[theorem]{Assumption}

\theoremstyle{remark}
\newtheorem{example}[theorem]{Example}
\newtheorem{remark}[theorem]{Remark}



\geometry{left=1in,right=1in,top=1in,bottom=1in, footskip=1cm}


\begin{document}
\begin{abstract}
This paper develops a theory of monoidal categories relative to a braided monoidal category, called \emph{augmented monoidal categories}. For such categories,  balanced bimodules are defined using the formalism of balanced functors. The two main constructions are a relative tensor product of monoidal categories as well as a relative version of the monoidal center, which are Morita dual constructions. A general existence statement for a relative tensor products is derived from the existence of pseudo-colimits.

In examples, a category of locally finite weight modules over a quantized enveloping algebra is equivalent to the relative monoidal center of modules over its Borel part. A similar result holds for small quantum groups, without restricting to locally finite weight modules. More generally, for modules over braided bialgebras, the relative center is shown to be equivalent to the category of braided Yetter--Drinfeld modules (or crossed modules). This category corresponds to modules over the braided Drinfeld double (or double bosonization) which are locally finite for the action of the dual.
\end{abstract}
\subjclass[2010]{Primary 16T05; Secondary 17B37, 20G42, 18D10}
\keywords{Monoidal center, Categorical modules, Braided monoidal categories, Relative tensor product}
\maketitle
\vspace{-10pt}
\tableofcontents


\section{Introduction}\label{introduction}

The subject of this paper is a categorical construction that gives modules over quantum groups (both small and generic) in specific examples --- the \emph{relative monoidal center} $\cZ_\cB(\cM)$. Here $\cM$ is what is called a \emph{$\cB$-augmented monoidal category} in this paper, for $\cB$ a braided monoidal category. Further, categorical modules over the relative center are studied. The categorical Morita dual of $\cZ_\cB(\cM)$ is shown to be the relative tensor product $\cM\boxtimes_\cB \cM^{\oop}$.

\subsection{Background}\label{background}

The \emph{Drinfeld double} (or \emph{quantum double}, \cite{Dri}) of a Hopf algebra is a fundamental construction in the field of quantum algebra. An important application is the algebraic construction of classes of three-dimensional topological field theories (TFTs): For example, Dijkgraaf--Witten and Reshetikhin--Turaev TFTs. In the former, the Drinfeld double of a finite group appears \cite{DPR} and the latter is obtained by studying a quotient of a category of modules over the quantum groups $U_q(\fr{g})$  \cite{RT}. This quantum group can either be constructed as a quotient of the Drinfeld double of its Borel Hopf subalgebra \cite{Dri}, or using a version of the Drinfeld double construction \cites{Maj1,Maj3}, called \emph{double bosonization} or the \emph{braided Drinfeld double}.

The center $\cZ(\cM)$ of a monoidal category $\cM$ \cites{Maj2,JS} provides a interpretation of the Drinfeld double construction. It is a braided monoidal category equivalent to the endomorphism category of the regular bimodule over $\cM$, i.e.
$$\cZ(\cM)\simeq \cHom_{\cM\text{--}\cM}(\cM,\cM),$$
cf. \cite{EGNO}*{Section 7.13}.
A natural source of monoidal categories are module categories over Hopf algebras $H$, $\cM=\lmod{H}$. For such categories, an equivalent description of the monoidal center is given by the category $\lYD{H}$ of \emph{crossed modules} (also called  \emph{Yetter--Drinfeld modules}) over $H$, see e.g. \cite{Mon}*{Section 10.6}.

In the vision of Crane--Frenkel \cite{CF}, it is suggested that  algebras (and their representation theory) should ultimately be replaced by categories (and their categorical modules) in order to algebraically construct four-dimensional invariants of manifolds. Studying categorical modules over monoidal categories can be seen as a step into this direction. 

It was shown in \cites{Ost,EGNO} that categorical modules over the monoidal center $\cZ(\cM)$
 can be constructed using categorical bimodules over $\cM$. In the case where $\cM$ is a finite $\Bbbk$-multitensor category, this construction gives all categorical modules over $\cZ(\cM)$. This finiteness condition holds, for example, for the category $\lmod{H}^{\mathrm{fd}}$ of finite-dimensional modules over a finite-dimensional Hopf algebra $H$ \cite{EGNO}*{Section 5.3}. Hence, all categorical modules over $\lmod{\Drin(H)}^{\mathrm{fd}}$ are described through this construction. 
 
\subsection{Quantum Groups and the Relative Center}\label{motivation}

The main motivation for this paper stems from the representation theory of the quantum group $U_q(\fr{g})$ \cites{Dri,Lus} and its relation to the monoidal center.
The category of Yetter--Drinfeld modules, describing the center of the monoidal category $\lmod{B}$ of modules over a bialgebra $B$, can --- more generally --- be defined for a bialgebra object $B$ in a braided monoidal category $\cB$, giving the braided monoidal category $\lYD{B}(\cB)$, see \cite{BD}. The negative nilpotent part $B=U_q(\fr{n}_-)$ of $U_q(\fr{g})$ is naturally such a bialgebra object (in fact, a Hopf algebra object) in the braided monoidal category $\cB=\lmod{U_q(\fr{t})}$, see e.g. \cites{AS,Lus}. Via the duality between $U_q(\fr{n}_-)$ and $U_q(\fr{n}_+)$ and a result of Majid  in \cite{Maj3}, we can view $\lYD{B}(\cB)$ in this example as the category of $U_q(\fr{g})$-modules which have a locally finite $u_q(\fr{n}_+)$-action. This tensor category contains the non-monoidal category $\cO_q$ associated to the quantum group \cite{AM}, which is an analogue of an important construction in Lie theory \cite{BGG}.

Motivated by the quantum group example, it is natural to expect the center of the monoidal category $\cM=\lmod{B}(\cB)$ to be equivalent to the above category $\lYD{B}(\cB)$. However, in this example, $\cZ(\cM)$ is a larger category, consisting of certain modules over $\Drin(U_q(\fr{b}_-))$, which is a Hopf algebra defined on the vector space $U_q(\fr{n}_+)\otimes U_q(\fr{t})^*\otimes U_q(\fr{t})\otimes U_q(\fr{n}_-)$. This algebra contains an additional copy of the dual of the Cartan part $U_q(\fr{t})$. Drinfeld takes the quotient by the Hopf ideal identifying $U_q(\fr{t})$ and $U_q(\fr{t})^*$ in \cite{Dri}.
In general, for a finite-dimensional braided bialgebra $B$  in a braided monoidal category $\cB=\lmod{H}$, for $H$ a quasi-triangular Hopf algebra, one can use the Radford biproduct  (or \emph{bosonization}) $B\rtimes H$ \cite{Rad} and find that
$$\cZ(\cM)\simeq \lYD{B\rtimes H}\simeq \lmod{\Drin(B\rtimes H)}.$$ 
One main construction of the present paper is a version $\cZ_{\cB}(\cM)$ of the monoidal center \emph{relative} to a braided monoidal category $\cB$. In the case where $\cM=\lmod{B}(\cB)$ studied above, we find that
$$\cZ_{\cB}(\cM)\simeq \lYD{B}(\cB)\simeq\lmod{\Drin_H(B^*,B)},$$
where $\Drin_H(B^*,B)$ is the double bosonization (or braided Drinfeld double) of Majid. Hence the relative monoidal center gives the category of $U_q(n_+)$-locally finite weight modules in the example of $U_q(\fr{g})$, and thus a refinement of the center construction. For an even root of unity $\epsilon$, the relative monoidal center of $\lmod{u_{\epsilon} (\fr{b}_-)}$ is equivalent to the category of modules over the small quantum group $u_\epsilon(\fr{g})$. Note that in the case $\cB=\Vect$ we recover the monoidal center $\cZ(\cM)$. 




\subsection{Summary of Results}
In the spirit of moving from the representation theory of algebras to modules of categories (in this case, monoidal categories), this paper studies the representation theory of the relative monoidal center  in the framework of \cite{EGNO}. 
After establishing the categorical setup and recalling generalities on categorical modules (Sections \ref{setup} \& \ref{catmodules}), the relative tensor product of categorical bimodules is reviewed in the  generality of finitely cocomplete $\Bbbk$-linear categories (Section \ref{reltensorsect} \& Appendix \ref{appendixA}).

The concept of the relative monoidal center from \cite{Lau} is refined using the language of $\cB$-balanced functors to allow a better study of its categorical modules. For this, $\cB$-augmented monoidal categories are introduced in Section \ref{augmentedcats}.
These can be thought of as categorical analogues of $C$-augmented $C$-algebras $R$ over a commutative $\Bbbk$-algebra $C$. 
Over a $\cB$-augmented monoidal category, we can study $\cB$-balanced bimodules (Section \ref{balancedbimodsect}). This construction is a categorical analogue of $R\otimes_C R^{\oop}$-modules, over a $C$-algebra $R$. In other words, $R$-bimodules for which the left and right $C$-action coincide.
Based on this concept, we present two constructions for a $\cB$-augmented monoidal category $\cM$: 
\begin{enumerate}
\item the \emph{relative monoidal center} $\cZ_{\cB}(\cM)$, which is defined as $\cZ_{\cB}(\cM)=\cHom_{\cM\text{--}\cM}^{\cB}(\cM,\cM),$
i.e. the category of endofunctors of $\cB$-balanced bimodule functors of the regular $\cM$-bimodule (Section \ref{monoidalcentersect});
\item 
the \emph{relative tensor product} $\cM\boxtimes_{\cB}\cM^{\oop}$ (Theorem \ref{relativeproductproperties}) which is a monoidal category, generalizing the tensor product of categories of  Kelly \cite{Kel}.
\end{enumerate}
The monoidal category $\cM\boxtimes_{\cB}\cM^{\oop}$ has the universal property that its categorical modules give all $\cB$-balanced bimodules over $\cM$ (Theorem \ref{bimoduletheorem}). We show that there is a natural construction to turn a $\cM$-bimodule into a $\cB$-balanced $\cM$-bimodule.
\begin{reptheorem}{adjunctionthm}
Restriction along the monoidal functor $\cM\boxtimes\cM^{\oop}\to \cM\boxtimes_{\cB}\cM^{\oop}$ has a left $2$-adjoint
$$(-)_{\cB}\colon \lmod{\cM\boxtimes\cM^{\oop}}\longrightarrow \lmod{\cM\boxtimes_{\cB}\cM^{\oop}}, \quad \cV\longmapsto\cV_\cB.$$
\end{reptheorem}
This construction is a categorical analogue of restricting a $R$-$R$-bimodule to the subspace on which the left and right $C$-action coincide. Moreover, for any $\cB$-augmented monoidal categories $\cM$ we prove that there is a bifunctor (see Section \ref{Moritasection})
$$\lmod{\cM\boxtimes_{\cB}\cM^{\oop}}\longrightarrow \lmod{\cZ_\cB(\cM)}.$$
This gives a way to produce categorical modules over the relative monoidal center from $\cB$-balanced $\cM$-bimodules. In the finite case, we generalize the result of \cites{EGNO,Ost} that $\cZ(\cM)$ and $\cM\boxtimes\cM^{\oop}$ are categorically Morita equivalent to the relative case.
\begin{reptheorem}{centermorita}
In the case where $\cM$ and $\cB$ are finite multitensor categories, the monoidal categories $\cZ_\cB(\cM)$ and $\cM\boxtimes_{\cB}\cM^{\oop}$ are \emph{categorically Morita equivalent}.
\end{reptheorem}
We can specialize these categorical constructions to categories of modules over bialgebras (or Hopf algebras) in a braided monoidal category. 
\begin{repproposition}{YDprop}[See also Example \ref{tensorBBmod}]
If $\cM=\lmod{B}(\cB)$ is the monoidal category of $B$-modules over a Hopf algebra object $B$ in a braided monoidal category $\cB$, then 
\begin{align*}
\cZ_{\cB}(\cM)\simeq \lYD{B}(\cB), && \cM\boxtimes_{\cB}\cM^{\oop}\simeq \lrmod{B}{B}(\cB).
\end{align*}
\end{repproposition}
The former category is that of \emph{Yetter--Drinfeld modules} (or \emph{crossed modules}) in $\cB$ of \cite{BD}, while the latter category consists of objects in $\cB$ that have commuting left and right $B$-module structures. We further show that for algebra (or coalgebra) objects $A$ in $\cM$, the categories $\lmod{A}(\cM)$ (respectively, $\lcomod{A}(\cM)$) give a large supply of categorical modules over the relative monoidal center in Section \ref{YDactionsect}.  

In Section \ref{drinapp}, we specialize the setup further, assuming $\cB=\lmod{H}$, for $H$ a quasi-triangular Hopf algebra over a field $\Bbbk$. 
\begin{repcorollary}{drincor}
For Hopf algebras $B,C$ in $\cB=\lmod{H}$ with a non-degenerate duality pairing
\begin{align*}
\cM\boxtimes_{\cB}\cM^{\oop}\simeq \lmod{(B\otimes \leftexp{\cop}{B})\rtimes H}, && \cZ_{\cB}(\cM)\simeq \lmod{\Drin_H(C,B)}^{C\mathrm{-lf}},
\end{align*}
where $C\mathrm{-lf}$ denotes the full subcategory where the $C$-action is locally finite.
For $B$ finite-dimensional, $\cZ_{\cB}(\cM)\simeq \lmod{\Drin_H(C,B)}$.
\end{repcorollary}
The Hopf algebra $\Drin_H(C,B)$ is the \emph{double bosonization} of \cites{Maj1,Maj2} and referred to as the \emph{braided Drinfeld double} here. For the Hopf algebra $(B\otimes \leftexp{\cop}{B})\rtimes H$ see Definition \ref{BBHdef}.

Some results of Section \ref{reptheorysect} relate to results that were obtained in \cite{Lau2} by direct computations. In particular, the Radford--Majid biproduct $A\rtimes B^*$ is naturally a comodule algebra over the (braided) Drinfeld double of $B$. Further, these constructions give a natural map inducing $2$-cocycles over a (braided) Hopf algebra $B$ to $2$-cocycles over its (braided) Drinfeld double.

Applications to the representation theory of quantum groups are included in Sections \ref{qgroupssect} and \ref{smallqgroupssect}. Fix a Cartan datum $(I,\cdot)$, denote by $\fr{g}=\fr{n}_-\oplus \fr{t}\oplus \fr{n}_+$ the associated Lie algebra, and let $\epsilon$ be a primitive even root of unity. Then we derived the following results for the small quantum group.
\begin{reptheorem}{smallthm}
There is an isomorphism of Hopf algebras between $\Drin_{u_\epsilon(\fr{t})}(u_\epsilon(\fr{n}_+),u_\epsilon(\fr{n}_-))$ and $u_\epsilon(\fr{g})$. Thus, there is an equivalence of monoidal categories 
$$\cZ_{\lmod{u_\epsilon(\fr{t})}}(\lmod{u_\epsilon(\fr{b}_-)})\simeq \lmod{u_{\epsilon}(\fr{g})}.$$
\end{reptheorem}
\begin{repcorollary}{smallqcor}Let $\cM=\lmod{u_\epsilon(\fr{b_-})}^{\mathrm{fd}}$ and $\cB=\lmod{u_\epsilon(\fr{t})}^{\mathrm{fd}}$.
Then $\cM$ is $\cB$-augmented, and the monoidal categories $\lmod{u_\epsilon(\fr{g})}^{\mathrm{fd}}$ and $\cM\boxtimes_{\cB}\cM^{\oop}$ are categorically Morita equivalent. The latter is equivalent to finite-dimensional modules over a Hopf algebra $t_\epsilon (\fr{g})$ (see Definition \ref{tepdef}).
\end{repcorollary}
In \cite{EG} another approach displays the small quantum group as a Drinfeld double of a quasi-Hopf algebra.
In the case were $q$ is a generic parameter, it was shown in \cite{Maj3} that $U_q(\fr{g})$ is isomorphic as a Hopf algebra to the braided Drinfeld double (or \emph{double bosonization}) of  $U_q(\fr{n_-})$. 
\begin{reptheorem}{quantumcenter}
Let $\cB=\lcomod{L}_q$, for $L$ the root lattice, and $\cM=\lmod{U_q(\mathfrak{n}_-)}(\cB)$. Then the category $\cZ_{\cB}(\cM)$ is equivalent the category  $\lmod{U_q(\fr{g})}^{U_q(\fr{n}_+)\mathrm{-lfw}}$ of  $U_q(\fr{n}_+)$-locally finite weight modules over $U_q(\fr{g})$.
\end{reptheorem}
The categorical results of this paper show that there exists a bifunctor from $\lmod{\cM\boxtimes_{\cB}\cM^{\oop}}\simeq \lmod{T_q(\fr{g})}^{\op{w}}$ to categorical modules over $\lmod{U_q(\fr{g})}^{U_q(\fr{n}_+)\mathrm{-lfw}}$. See Definition \ref{Tqdef} for the Hopf algebra $T_q(\fr{g})$. This result provides a way to construct categorical modules over the latter category.
The representation-theoretic applications of these results will be explored further elsewhere.

\subsection{Applications to TFT}

Monoidal categories are closely connected to the construction of TFTs \cite{TV}. Working with finite tensor categories and their exact modules, relative tensor products of bimodules appear in the construction of $3$-dimensional TFTs with defects \cite{FSS}. In this context, the relative tensor product of bimodule categories, and (twisted versions of) the monoidal center are studied. This paper generalizes some of the categorical constructions used in this work, motivated by the representation theory of quantum groups, by allowing for more general tensor categories, and using the idea of viewing a monoidal category relative to a braided monoidal category $\cB$. Applications to TFTs are expected to be the subject of future work.

\subsection{Acknowledgments}

The author thanks Arkady Berenstein, Kobi Kremnizer, Shahn Majid, Ehud Meir, Vanessa Miemietz, and Chelsea Walton for interesting and helpful discussions related to the subject of this paper. Early parts of this research were supported by an EPSRC Doctoral Prize at the University of East Anglia. The author is supported by the Simons Foundation.

\section{Categorical Setup}

\subsection{The 2-Category of Finitely Cocomplete Categories}\label{setup}

Let $\Bbbk$ be a field. We say that a category $\cC$ is \emph{$\Bbbk$-linear} if it is pre-additive and $\Bbbk$-enriched \cite{EGNO}*{Definition 1.2.2}. We shall work with $\Bbbk$-linear categories that are  equivalent to small categories and in addition have all finite colimits. This assumption implies that finite biproducts exist and hence $\cC$ is additive. The existence of finite colimits can, for example, be ensured if $\cC$, in addition to finite coproducts, has coequalizers. As primary examples, we may consider the category $\lmod{A}$ of $A$-modules, where $A$ is a $\Bbbk$-algebra.

The collection of all such $\Bbbk$-linear categories with finite colimits forms a $2$-category together with $\Bbbk$-linear functors, which will automatically preserve all finite biproducts and are required to preserve all finite colimits, together with natural transformations of such functors. The resulting structure is that of a $\Bbbk$-linear $2$-category, which is denoted by $\Cat^c$. Between two categories $\cC$, $\cD$ of $\Cat^c$, we denote the category of $\Bbbk$-linear finite colimit preserving functors by $\Fun^c(\cC,\cD)$, which again lies in $\Cat^c$ (finite colimits are computed pointwise, and commute with other colimits). In contrast, we will denote the category of all $\Bbbk$-linear functors between such categories (not necessarily preserving finite biproducts or colimits) by $\Fun(\cC,\cD)$ and the corresponding larger $2$-category by $\Cat$.

There is a naive tensor product for $\cC$, $\cD$ denoted by $\otimes$ (cf. \cite{Kel}*{Section 1.4}), giving a $\Bbbk$-linear category $\cC\otimes \cD$. Its objects are denoted by $X\otimes Y$, were $X$ is an object of $\cC$, and $Y$ is an object of $\cD$. The spaces of morphisms are defined as $$\Hom_{\cC\otimes \cD}\left(X\otimes Y,X'\otimes Y'\right)=\Hom_{\cC}(X,X')\otimes_\Bbbk \Hom_{\cC}(Y,Y').$$
Note that in $\cC\otimes \cD$ biproducts are not ensured to exist. A proper cartesian closed structure on $\Cat$ is given by the tensor product $\cC\boxtimes \cD$, see \cite{Kel}*{Section 6.5}. Moreover, if finite colimits exist in $\cC$ and $\cD$, then they exist in $\cC\boxtimes \cD$, giving a cartesian closed structure on $\Cat^c$. It satisfies the universal property of being a finitely cocomplete $\Bbbk$-linear category such that there is a $\Bbbk$-linear functor $\cC\otimes \cD\to \cC\boxtimes \cD$ which is colimit preserving in both components, inducing natural equivalences 
$$\Fun^c(\cC\boxtimes \cD, \cE)\simeq\Fun^c(\cC,\Fun^c(\cD, \cE)),$$
in $\Cat^c$,
for $\cE$ in $\Cat^c$. The tensor product $\cC\boxtimes \cD$ is the closure of $\cC\otimes \cD$ under finite colimits \cite{Kel}*{(6.27)}. Hence, it follows that finite colimit preserving $\Bbbk$-linear functors $\cC\boxtimes \cD\to \cE$ also correspond to $\Bbbk$-linear functors $\cC\otimes \cD\to \cE$ which are finite colimit preserving in both components. Note that if $\cC$ and $\cD$ are abelian categories, $\cC\boxtimes \cD$ is not necessarily abelian. For a clarification of the relationship to Deligne's tensor product of abelian categories, see \cite{LF}. In the abelian case, the finite colimit preserving functors are precisely the right exact functors.

Unless otherwise stated, all categories and functors considered live in the $2$-category $\Cat^c$. We shall use properties of the $2$-category $\Cat^c$ such as its closure under colimits and pseudo-colimits \cites{BKP,Kel3} in order to obtain existence results for a relative version of the tensor products mentioned in this section (see Appendix~\ref{appendixA}). The cocompleteness of $\Cat^c$ is obtained by the construction of a $2$-comonad $T$ on $\Cat$ such that its coalgebras are finitely cocomplete $\Bbbk$-linear categories, with coalgebra morphisms preserving the finite colimits \cite{BKP}. Dually to \cite{BKP}*{Theorem 2.6}, it follows that $\Cat^c$ admits pseudo-colimits, taking the $2$-category ${\sc\mbox{K}}$ appearing therein to be the complete and cocomplete $2$-category of small $\Bbbk$-linear categories $\Cat$ \cite{Kel3}, and $T$ is a $2$-comonad such that $T$-coalgebras are finite cocomplete $\Bbbk$-linear categories.

\subsection{Notations and Conventions}

Throughout this paper, $\cM$ denotes a monoidal category in $\Cat^c$, with tensor product $\otimes=\otimes^{\cM}\colon \cM\boxtimes \cM\to \cM$, associativity isomorphism $\alpha\colon \otimes (\otimes \boxtimes \ide_\cM)\natisomorph \otimes (\ide_\cM \boxtimes \,\otimes)$, and unit $\one$ together with an isomorphism $\iota\colon \one \otimes \one \to \one$ such that left and right tensoring by $\one$ are auto-equivalences of $\cM$, see \cite{EGNO}*{Definition 2.1.1}. 

\begin{assumption}\label{strict} For simplification of the exposition, we will assume that $\cM$ is strictly associative and unital. That is, $(X\otimes Y)\otimes Z=X\otimes (Y\otimes Z)$ and $\alpha_{X,Y,Z}=\ide$ for all objects $X,Y,Z$ of $\cM$. Further $\one\otimes M=M=M\otimes \one$, and left and right unitary isomorphisms are given by identities. 
\end{assumption}

The choice to work with a strict monoidal category is justified by Mac Lane's coherence theorem. That is, whenever two objects constructed by applying the tensor product and tensor unit can be identified via two isomorphisms which are combinations of the natural isomorphisms present in the structure of the monoidal category, then these isomorphisms are equal \cite{Mac}*{Section VII.2}. If needed, one can insert the associativity (or unitary) isomorphisms in the diagrams in the appropriate places and the statements will remain valid. Note further that by Mac Lane's strictness theorem for monoidal categories \cite{Mac}*{Section XI.3}, any monoidal category is equivalent (via strong monoidal functors) to a strict monoidal category.

The symbol $\cB$ will denote a braided monoidal category in $\Cat^c$ with braiding $\Psi$. We also omit the associativity and unitary isomorphisms of $\cB$ from the notation, treating $\cB$ as strict.  The category $\overline{\cB}$ is the same as $\cB$ but declaring the inverse braiding $\Psi^{-1}$ to be the braiding.

All functors of monoidal categories are assumed to be strong monoidal. A strong monoidal functor $\rG\colon \cM \to \cN$ comes with a natural isomorphism $\mu^{\rG}\colon \otimes(\rG\boxtimes \rG)\stackrel{\sim}{\Longrightarrow}\rG\otimes$ satisfying a commutative diagram as in \cite{EGNO}*{(2.23)} relating the associativity isomorphisms of $\cM$ and $\cN$. In addition, we assume that $\rG$ is strictly unital. That is, $\rG(\one)=\one$, and $\mu^{\rG}_{\one, M}$, $\mu^{\rG}_{M,\one}$ are both given by identities.

We summarize here some notational conventions adapted in this paper for monoidal categories.
The symbol $\otimes^{\oop}$ is used to denote the opposite tensor product given by $X\otimes^{\oop}Y=Y\otimes X$. With relation to Kelly's tensor product of categories $\boxtimes$, we denote pure tensors of pairs of objects $X,Y$ by $X\otimes Y$. When brackets are missing, we adapt the convention $\otimes^{\cM}$ before $\boxtimes$. For a morphism $f$ and an object $X$ in $\cM$ we write $f\otimes X$ to denote $f\otimes \ide_X$. Similarly, when considering natural transformations, we may denote by  $\phi\otimes \rG$ the natural transformation $\phi\otimes \ide_{\rG}$.

A main source of examples of monoidal categories will be obtained from a braided bialgebra or Hopf algebra $B$ (see e.g. \cite{Maj1}*{Definition 9.4.5}).  That is, $B$ is a bialgebra or Hopf algebra object in a braided monoidal category $\cB$. More explicitly, $B$ comes equipped with the following morphisms in $\cB$: a product map $m = m_B\colon B\otimes B\to B$, with unit $1\colon I \to B$, and a coproduct map $\Delta=\Delta_B\colon B\to B\otimes B$, with counit $\varepsilon\colon B\to I$, satisfying the axioms (\ref{bialgebraaxiom1})--(\ref{bialgebraaxiomlast}).
\begin{gather}\label{bialgebraaxiom1}
m(m\otimes \ide_B) = m (\ide_B\otimes m ),\\
m( 1\otimes \ide_B) = m(\ide_B\otimes 1)=\ide_B,\\
(\Delta \otimes \ide_B) \Delta = (\ide_B\otimes \Delta )\Delta ,\\
(\varepsilon\otimes \ide_B)\Delta  =(\ide \otimes \varepsilon)\Delta=\ide,\\
\Delta m = m\otimes m(\ide_B\otimes \Psi_{B,B}\otimes \ide_B)(\Delta\otimes \Delta),\label{bialgebraaxiom}\\
\Delta 1 = 1\otimes 1,\\
\varepsilon m = \varepsilon\otimes \varepsilon.\label{bialgebraaxiomlast}
\end{gather}
The first two axioms state that $B$ is algebra algebra object in $\cB$, and the next two axioms state that $B$ is also a coalgebra object in $\cB$.
The last three axioms may be rephrased by saying that $\Delta$, and  $\varepsilon$ are morphisms of algebras in $\cB$, or, equivalently, that $m$, $1$ are morphisms of coalgebras in $\cB$. Morphisms of bialgebras are morphisms in $\cB$ which commute with all of these structural maps. If, in addition, there exists an invertible morphism $S\colon B\to B$ in $\cB$ which is a two-sided convolution inverse to $\ide_B$, i.e.
\begin{align}
m(\ide_B\otimes S)\Delta&=m(S\otimes \ide_B)\Delta=1\varepsilon,
\end{align}
then we say $B$ is a \emph{Hopf algebra} in $\cB$. Morphisms of Hopf algebras are required to commute with this antipode $S$. It is a consequence that $S$ is an anti-algebra morphism in $\cB$ (see \cite{Maj1}*{Fig. 9.14} for a proof).

We denote the category of algebra objects in $\cB$ by $\Alg(\cB)$, and write $\CoAlg(\cB)$ for coalgebras in $\cB$, $\BiAlg(\cB)$ for bialgebras, and $\Hopf(\cB)$ for Hopf algebras. Note that while $\Alg(\cB)$ and $\CoAlg(\cB)$ are monoidal categories with the underlying tensor product in $\cB$ (under use of the braiding), $\BiAlg(\cB)$ and $\Hopf(\cB)$ are not. Given an algebra $A$ in $\cB$, we denote the category of left $A$-modules by $\lmod{A}(\cB)$, and right $A$-modules by $\rmod{A}(\cB)$. Similarly, given a coalgebra $C$, we write $\lcomod{C}(\cB)$ and $\rcomod{C}(\cB)$ for left and right comodules over $C$ in $\cB$.


\section{Balanced Bimodules and the Relative Monoidal Center}\label{catsect}

This sections contains this paper's main constructions on the level of monoidal categories and their (bi)modules. The main object is the \emph{relative monoidal center} $\cZ_\cB(\cM)$ which is defined here using the concept of $\cB$-balanced bimodules, refining the construction from \cite{Lau}. The Morita dual to this center construction is the monoidal category $\cM\boxtimes_{\cB}\,\cM^{\oop}$, for which an existence statement is provided. All constructions are carried out in the $2$-category $\Cat^c$.

\subsection{Categorical Modules}\label{catmodules}

Following \cite{EGNO}*{Definition 7.1.3}, a \emph{left module} $\cV$ over a monoidal category $\cM$ is a category $\cV$ together with a monoidal functor
\begin{align*}
\triangleright\colon \cM\longrightarrow \Ends^c(\cV),
\end{align*}
where $\Ends^c(\cV)$ is the monoidal category of endofunctors on $\cV$ in $\Cat^c$ with respect to composition. 
It is often helpful to spell out the definition of a categorical module in terms of a functor $\triangleright \colon \cM \boxtimes \cV \longrightarrow \cV$ together with a natural isomorphism $\chi \colon \triangleright (\otimes \boxtimes \ide_\cV)\natisomorph \triangleright(\ide_\cM \boxtimes \triangleright)$ satisfying compatibility conditions with the monoidal structure of $\cM$. Recalling that we shall treat $\cM$ as a \emph{strict monoidal category}, cf. Assumption 
\ref{strict}, this amounts to the conditions that $\one \triangleright V=V$ for any object $V$ of $\cV$, and the diagram
\begin{equation}\label{modulecoherence}
\vcenter{\hbox{\xymatrix{
(M\otimes N\otimes P)\triangleright V\ar[rr]^{\chi_{M\otimes N, P,V}}\ar[d]^{\chi_{M,N\otimes P,V}}&& (M\otimes N)\triangleright (P\triangleright V)\ar[d]^{\chi_{M,N,P\triangleright V}}\\
M\triangleright ((N\otimes P)\triangleright V))\ar[rr]^{M\triangleright \chi_{N,P,V}}&&M\triangleright(N\triangleright(P\triangleright V))
}}}
\end{equation}
commutes for any choice of objects $M,N,P\in \cM$ and $V\in\cV$ (cf. \cite{EGNO}*{Definition 7.1.2}).
Further, note that $\chi_{M,\one,V}=\chi_{\one,M,V}=\ide_{M\triangleright V}$.

A \emph{morphism} $(\rG,\lambda)\colon (\cV,\triangleright_\cV)\longrightarrow (\cW,\triangleright_\cW)$ of left modules over $\cM$ is a functor $\rG\colon \cV\to\cW$ together with a system of natural isomorphisms
\begin{align*}
\lambda_X \colon  \rG(X\triangleright_{\cV}\ide_\cW)&\natisomorph X\triangleright_{\cW}\rG.
\end{align*}
which is natural in $X$ and compatible with the monoidal structure in $\cM$ as in \cite{EGNO}*{Eq. (7.6)}. We write $\lambda_{X,V}=(\lambda_X)_V$. The compatibility conditions amount to  $\lambda_{\one,V}=\ide_{\rF(V)}$, and that the diagram
\begin{align}\label{modulemorph1}
\vcenter{\hbox{
\xymatrix{
&\rG(M\otimes N\triangleright_{\cV}V)\ar[ld]_{\rG(\chi^{\cV}_{M,N,V})}\ar[rd]^{\lambda_{M\otimes N,V}}&\\
\rG(M\triangleright_{\cV}(N\triangleright_{\cV}V))\ar[d]_{\lambda_{M,N\triangleright_{\cV} V}}&&M\otimes N\triangleright_{\cW}\rG(V)\ar[d]^{\chi^{\cW}_{M,N,\rG(V)}}\\
M\triangleright_{\cW}\rG(N\triangleright_{\cV}V)\ar[rr]^{M \triangleright_{\cW} \lambda_{N,V}}&&M\triangleright_{\cW}( N\triangleright_{\cW}\rG (V ))
}}}
\end{align}
commutes for any objects $M,N$ in $\cM$, and $V$ in $\cV$.

In other words, the coherent  isomorphism $\lambda$ defines a weakly commuting diagram of functors
\begin{align*}
\xymatrix{\cM\ar[rr]^{\triangleright_{\cV}}\ar[d]^{\triangleright_{\cW}} &&\Ends^c(\cV)\ar[d]^{\rG(-)}\ar@{=>}[lld]^{\lambda}\\
\Ends^c(\cW)\ar[rr]_{(-)\rG} &&\Fun^c(\cV,\cW)\,.
}
\end{align*}

Given two morphisms $\rG, \rH\colon \cV\to \cW$ or $\cM$-modules, we can define a $2$-morphism $\tau\colon\rG\Longrightarrow\rH$ to be a natural transformation such that
\begin{equation}
(\ide_{\cM}\triangleright_{\cW} \tau)\lambda^{\rG}=\lambda^{\rH}\tau_{\triangleright_{\cV}},
\end{equation}
i.e. the diagram
\begin{align}\label{2morphism}
\vcenter{\hbox{
\xymatrix{
\rG(M\triangleright_{\cV} V)\ar[rr]^-{\lambda^{\rG}_{M,V}}\ar[d]_{\tau_{M\triangleright_{\cV} V}}&&M\triangleright_{\cW} \rG(V)\ar[d]^{M\triangleright_{\cW} \tau_{V}}\\
\rH(M\triangleright_{\cV} V)\ar[rr]^-{\lambda^{\rH}_{M,V}}&&M\triangleright_{\cW} \rH(V)
}}}
\end{align}
commutes for any objects $M$ of $\cM$ and $V$ of $\cV$. Hence, we obtain a $\Bbbk$-linear $2$-category $\lmod{\cM}$.
Indeed, given two morphisms of left $\cM$-modules $\rG\colon \cV\to \cW$, $\rH\colon \cW\to \cX$, the composition $\rH\rG$ is a morphism of $\cM$-modules with $\lambda^{\rH\rG}$ defined by
\begin{align}\label{lambdacompose}
\lambda^{\rH\rG}_{M,V}&=\lambda_{M,\rG(V)}^{\rH}\rH(\lambda_{M,V}^{\rG}).
\end{align}
The category of $\cM$-module morphisms from $\cV$ to $\cW$, denoted by $\cHom_{\cM}(\cV,\cW)$, contains finite colimits and hence $\lmod{\cM}$ is enriched over $\Cat^c$.

\begin{remark}
This information can be rephrased as follows: A left module $\cV$ over $\cM$ is a $\Bbbk$-linear bifunctor
$
\triangleright_{\cV} \colon \cM\longrightarrow \Cat^c,
$ where $\cM$ is viewed as a bicategory with one object, and $\cV$ is the image of this object. A morphism of left modules over $\cM$ is now a morphism of bifunctors $\triangleright_{\cV}\Longrightarrow \triangleright_\cW$.
This way, $2$-morphisms of $\cM$-modules correspond to modifications of morphisms of bifunctors.
\end{remark}

From the monoidal functor point of view, a \emph{right} $\cM$-module structure on $\cV$ is a monoidal functor $\cM^{\oop}\longrightarrow \Ends^c(\cV)$, and a $\cM$-\emph{bimodule} is a monoidal functor $\cM\boxtimes \cM^{\oop}\longrightarrow \Ends^c(\cV)$. More generally, a $\cM$-$\cN$-bimodule is a monoidal functor $\cM\boxtimes \cN^{\oop}\longrightarrow \Ends^c(\cV)$. We may equivalently express the data of a bimodule category as follows (cf.  \cite{FSS}*{Definition 2.5}):

\begin{proposition}\label{bimoduledata}
An $\cM$-$\cN$-bimodule structure on $\cV$ is equivalent to the data of two functors
\begin{align*}
\triangleright &\colon \cM\boxtimes \cV\longrightarrow \cV,& \triangleleft & \colon \cV\boxtimes \cN\longrightarrow \cV,
\end{align*}
together with natural isomorphisms
\begin{gather*}
\chi\colon \triangleright (\otimes \boxtimes\ide_\cV) \natisomorph\triangleright (\ide_\cM\boxtimes ~\triangleright),\qquad
\xi \colon \triangleleft(\ide_\cV\boxtimes ~\otimes)\natisomorph \triangleleft(\triangleleft \boxtimes \ide_\cN),\\
\zeta\colon \triangleleft(\triangleright \boxtimes \ide_\cN)\natisomorph \triangleright (\ide_\cM\boxtimes ~\triangleleft).
\end{gather*}
These natural isomorphisms satisfy  the coherence condition given by Eq. (\ref{modulecoherence}), as well as the following Eqs. (\ref{modulecoherence2})--(\ref{bimodcoherence2}) below. 
\end{proposition}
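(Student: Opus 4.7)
The plan is to unpack the definition of a monoidal functor $F\colon \cM\boxtimes\cN^{\oop}\to\Ends^c(\cV)$ and match its data against the quintuple $(\triangleright, \triangleleft, \chi, \xi, \zeta)$. The key observation is that in the monoidal category $\cM\boxtimes\cN^{\oop}$ every object decomposes as $M\otimes N = (M\otimes \one)\otimes(\one\otimes N) = (\one\otimes N)\otimes(M\otimes \one)$, where the tensor in $\cN^{\oop}$ reverses order. Thus the two monoidal subcategories $\cM\boxtimes\{\one\}$ and $\{\one\}\boxtimes\cN^{\oop}$ generate $\cM\boxtimes\cN^{\oop}$, and their objects commute up to a canonical isomorphism given by the symmetry-type identity above.

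From a monoidal $F$ to the data: set $M\triangleright V := F(M\otimes\one)(V)$ and $V\triangleleft N := F(\one\otimes N)(V)$. Restricting the tensorator $\mu^F$ to the two subcategories produces monoidal functors $\cM\to\Ends^c(\cV)$ and $\cN^{\oop}\to\Ends^c(\cV)$, and hence $\chi$ and $\xi$ together with their respective coherence conditions (the first being exactly (\ref{modulecoherence})). The natural isomorphism $\zeta$ is obtained from the instance of $\mu^F$ applied to the pair $(M\otimes\one)$ and $(\one\otimes N)$: because the two composite objects $(M\otimes\one)\otimes(\one\otimes N)$ and $(\one\otimes N)\otimes(M\otimes\one)$ are both equal to $M\otimes N$, one obtains a canonical isomorphism $F(M\otimes\one)\circ F(\one\otimes N)\natisomorph F(\one\otimes N)\circ F(M\otimes\one)$, which is precisely $\zeta\colon(M\triangleright V)\triangleleft N\natisomorph M\triangleright(V\triangleleft N)$.

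In the reverse direction, given $(\triangleright,\triangleleft,\chi,\xi,\zeta)$, define $F(M\otimes N)(V):=M\triangleright(V\triangleleft N)$ on pure tensors, and extend by the universal property of $\boxtimes$ to a functor $\cM\boxtimes\cN^{\oop}\to\Ends^c(\cV)$. The tensorator $\mu^F_{(M\otimes N),(M'\otimes N')}$ must provide an isomorphism between $F(M\otimes N)\circ F(M'\otimes N')(V)=M\triangleright((M'\triangleright(V\triangleleft N'))\triangleleft N)$ and $F((M\otimes M')\otimes(N'\otimes N))(V)=(M\otimes M')\triangleright(V\triangleleft(N'\otimes N))$. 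Such an isomorphism is assembled from $\chi$ (to move $M$ past $M'$ on the left), $\xi$ (to combine $N',N$ on the right), and $\zeta$ (to commute the right action of $N$ past the left action of $M'$); the axioms (\ref{modulecoherence2})--(\ref{bimodcoherence2}) to be spelled out are exactly those ensuring this $\mu^F$ is well-defined and satisfies the pentagon and unit conditions of a strong monoidal functor.

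The main obstacle is the combinatorial bookkeeping in the round-trip verification: one must check that the composite of $\chi$, $\xi$, and $\zeta$ used to define $\mu^F$ does not depend on the order in which the individual re-associations and commutations are carried out, and that the resulting monoidal functor $F$ recovers the original data. This reduces to pasting diagrams whose local faces are instances of (\ref{modulecoherence}), its right-module analogue, and the compatibilities expressing that $\zeta$ is compatible with $\chi$ and $\xi$. These are exactly the conditions (\ref{modulecoherence2})--(\ref{bimodcoherence2}), so once the translation of axioms is fixed, the two passages are mutual inverses essentially by construction.
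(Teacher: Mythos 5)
Your proposal is correct and follows the standard unpacking that the paper itself relies on (the paper states Proposition \ref{bimoduledata} without proof, citing \cite{FSS}, and the explicit formulas you give --- $F(M\otimes N)(V)=M\triangleright(V\triangleleft N)$ with the tensorator assembled from $\chi$, $\xi$, $\zeta$ --- appear verbatim in the paper's proof of Proposition \ref{balancedbimondprop}). The identification of $\zeta$ with the tensorator applied to the two factorizations $(M\otimes\one)\otimes(\one\otimes N)=(\one\otimes N)\otimes(M\otimes\one)=M\otimes N$ in $\cM\boxtimes\cN^{\oop}$, and of Eqs.~(\ref{modulecoherence2})--(\ref{bimodcoherence2}) with the coherence of that tensorator, is exactly the intended argument.
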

\begin{gather}\label{modulecoherence2}
\vcenter{\hbox{\xymatrix{
V\triangleleft (M\otimes N\otimes P)\ar[rr]^{\xi_{V,M,N\otimes P}}\ar[d]^{\xi_{V,M\otimes N,P}}&& (V\triangleleft M)\triangleleft(N\otimes P) \ar[d]^{\xi_{V\triangleleft M,N,P}}\\
(V\triangleleft (M\otimes N))\triangleleft P\ar[rr]^{\xi_{V,M,N}\triangleleft P}&&((V\triangleleft M)\triangleleft N)\triangleleft P,
}}}
\end{gather}
\begin{gather}
\label{bimodcoherence1}
\vcenter{\hbox{
\xymatrix{
&(X\otimes Y\triangleright V)\triangleleft M\ar[ld]_{\chi_{X,Y,V}\triangleleft M}\ar[rd]^{\zeta_{X\otimes Y,V,M}}&\\
(X\triangleright (Y\triangleright V))\triangleleft M\ar[d]^{\zeta_{X,Y\triangleright V,M}}&&X\otimes Y \triangleright (V\triangleleft M)\ar[d]^{\chi_{X,Y,V\triangleleft M}}\\
X\triangleright ((Y\triangleright V)\triangleleft M)\ar[rr]^{X\triangleright \zeta_{Y,V,M}}&&X\triangleright (Y \triangleright (V\triangleleft M)),
}}}
\end{gather}
\begin{gather}
\label{bimodcoherence2}
\vcenter{\hbox{
\xymatrix{
&X\triangleright (V\triangleleft M\otimes N)\ar[ld]_{X\triangleright \xi_{V,M,N}}\ar[rd]^{\zeta^{-1}_{X,V,M\otimes N}}&\\
X\triangleright ((V\triangleleft M)\triangleleft N)  \ar[d]^{\zeta^{-1}_{X,V\triangleleft M,N}}&&(X \triangleright V)\triangleleft M\otimes N\ar[d]^{\xi_{X\triangleright V,M,N}}\\
(X\triangleright (V\triangleleft M))\triangleleft N\ar[rr]^{\zeta^{-1}_{X,V,M}\triangleleft N}&&((X\triangleright V) \triangleleft M)\triangleleft N,
}}}
\end{gather}
for objects $X,Y$ of $\cM$, $V$ of $\cV$, and $M,N,P$ of $\cN$.
In addition, we require $\one \triangleright V=V$, $V\triangleleft \one =V$, and that
\begin{align}
\chi_{\one,X,V}&=\chi_{X,\one,V}=\ide_{X\triangleright V}, &
\xi_{V,\one,N}&=\xi_{V,N,\one}=\ide_{V\triangleleft N}, \\
\zeta_{X, V,\one}&=\ide_{X\triangleright V}, &\zeta_{\one, V,N}&=\ide_{V\triangleleft N},
\end{align}
for objects $X$ of $\cM$, $N$ of $\cN$, and $V$ of $\cV$.

\begin{definition}
We define the $2$-category $\BiMod_{\cM\text{--}\cN}$ as the $2$-category of $\cM$-$\cN$-bimodules  in $\Cat^c$ using the equivalent description of the data from Proposition \ref{bimoduledata}. This amounts to having as objects $\cM$-$\cN$-bimodules, $1$-morphisms in $\BiMod_{\cM\text{--}\cN}$ are given by functors $\rG\colon \cV\to\cW$ together with natural isomorphisms
\begin{align*}
\lambda \colon  \rG(X\triangleright_{\cV}\ide_\cV)&\natisomorph X\triangleright_{\cW}\rG,\\
\rho \colon  \rG(\ide_\cV\triangleleft_{\cV}N)&\natisomorph \rG\triangleleft_{\cW}N,
\end{align*}
for objects $X$ of $\cM$, $N$ of $\cN$, which are coherent with the structural isomorphisms of the modules. That is, diagram (\ref{modulemorph1}) commutes, in addition to a similar diagram for $\rho$ and $\xi$:
\begin{align}\label{modulemorph2}
\vcenter{\hbox{
\xymatrix{
&\rG(V\triangleleft_{\cV}M\otimes N)\ar[ld]_{\rG(\xi^{\cV}_{V,M,N})}\ar[rd]^{\rho_{V,M\otimes N}}&\\
\rG((V\triangleleft_{\cV}M)\triangleleft_{\cV} N)\ar[d]_{\rho_{V\triangleleft_{\cV}M,N}}&&\rG(V)\triangleleft_{\cW}M\otimes N\ar[d]^{\xi^{\cW}_{\rG(V),M,N}}\\
\rG(V\triangleleft_{\cV}M)\triangleleft_{\cW}N\ar[rr]^{\rho_{V,M}\triangleleft_{\cW} N}&&(\rG(V)\triangleleft_{\cW}M)\triangleleft_{\cW}N,
}}}
\end{align}
for objects $M,N$ of $\cN$; further, the diagram
\begin{align}\label{modulemorph3}
\vcenter{\hbox{
\xymatrix{
\rG((X\triangleright_{\cV} V)\triangleleft_{\cV} N)\ar[rr]^{\rG(\zeta_{X,V,N}^{\cV})}\ar[d]^{\rho_{X\triangleright_{\cV} V,N}}&&\rG(X\triangleright_{\cV} (V\triangleleft_{\cV} N))\ar[d]^{\lambda_{X,V\triangleleft_{\cV} N}}\\
\rG(X\triangleright_{\cV} V)\ar[d]^{\lambda_{X,V}\triangleleft_{\cW} N}\triangleleft_{\cW} N&&X\triangleright_{\cW} \rG(V\triangleleft_{\cV} N)\ar[d]^{X\triangleright_{\cW} \rho_{V, N}}\\
(X\triangleright_{\cW} \rG(V))\triangleleft_{\cW} N\ar[rr]^{\zeta_{X,\rG(V),N}^{\cW}}&&X\triangleright _{\cW}(\rG(V)\triangleleft_{\cW} N),
}}}
\end{align}
is required to commute for objects $X$ of $\cM$, $M,N$ of $\cN$, $V$ of $\cV$. Moreover, we require that 
\begin{align}
\lambda_{\one, V}=\ide_{\rG(V)}, && \rho_{V,\one}=\ide_{\rG(V)},&&\forall V\in \cV.
\end{align}

Note that given two such morphisms $(\rG,\lambda^{\rG}, \rho^{\rG})\colon \cV\to \cW$ and $(\rH,\lambda^{\rH}, \rho^{\rH})\colon \cW\to \cX$ the composition is $(\rH\rG,\lambda^{\rH\rG}, \rho^{\rH\rG})$ with $\lambda^{\rH\rG}$ defined in Eq. (\ref{lambdacompose}) and
\begin{align}\label{rhocompose}
\rho^{\rH\rG}_{V,N}&=\rho^{\rH}_{\rG(V),N}\rH(\rho_{V,N}^{\rG}).
\end{align}
In $\BiMod_{\cM\text{--}\cN}$, a $2$-morphism $\tau\colon (\rG,\lambda^{\rG},\rho^{\rG}) \Longrightarrow (\rH,\lambda^{\rH},\rho^{\rH})$ is a natural transformations commuting with this structure. That is, diagram (\ref{2morphism}) and the diagram
\begin{align}\label{2morphism2}
\vcenter{\hbox{
\xymatrix{
\rG(V\triangleleft_{\cV} N)\ar[rr]^-{\rho^{\rG}_{V,N}}\ar[d]_{\tau_{V\triangleleft_{\cV} N}}&&\rG(V)\triangleleft_{\cW} N\ar[d]^{\tau_V\triangleleft_{\cW}N}\\
\rH(V\triangleleft_{\cV} N)\ar[rr]^-{\rho^{\rH}_{V,N}}&&\rH(V)\triangleleft_{\cW} N
}}}
\end{align}
commute for objects $N$ of $\cN$, $V$ of $\cV$. Given two $\cM$-$\cN$-bimodules, we denote the category of $\cM$-$\cN$-bimodule morphism from $\cV$ to $\cW$ by $\cHom_{\cM\text{--}\cN}(\cV,\cW)$.
\end{definition}
The discussion can be summarized by saying that there is a biequivalence of $\Bbbk$-linear $2$-categories
$\BiMod_{\cM\text{--}\cN}\simeq \lmod{\cM\boxtimes\cN^{\oop}}.$


\subsection{Balanced Functors and Relative Tensor Products}\label{reltensorsect}

We recall the following definition of balanced functors from \cite{ENO}, \cite{FSS}:

\begin{definition}\label{reltensordef}
Let $\cV$ be a right $\cC$-module and $\cW$ a left $\cC$-module category. A functor
$$\rF\colon \cV\boxtimes \cW\to \cT, \qquad V\otimes W\longmapsto \rF(V, W)$$
is called \emph{$\cC$-balanced} if it comes equipped with a system of isomorphisms 
\begin{align*}
\eta_{V,C,W}\colon \rF(V\triangleleft C, W)\stackrel{\sim}{\longrightarrow}\rF(V,C\triangleright W),&&\forall C\in \cC, V\in \cV, W\in \cW,
\end{align*}
natural in $V,C,W$,
which is compatible with the natural isomorphisms of the module structures (as in \cite{ENO}*{Definition 3.1}). That is, the diagram
\begin{align}\label{balancedcoh}
\vcenter{\hbox{
\xymatrix{
\rF(V\triangleleft (C\otimes D),W)\ar[rr]^{\rF(\xi_{V,C,D}, W)}\ar[d]_{\eta_{V, C\otimes D, W}}&&\rF((V\triangleleft C)\triangleleft D,W)\ar[d]^{\eta_{V\triangleleft C,D, W}}\\
\rF(V,(C\otimes D)\triangleright W)\ar[rd]_{\rF(V, \chi_{C,D,W})}&&\rF(V\triangleleft C, D\triangleright W)\ar[ld]^{\eta_{V,C,D\triangleright W}}\\
&\rF(V,C\triangleright(D\triangleright W))&
}}}
\end{align}
commutes for any objects $V$ in $\cV$, $W$ in $\cW$, $C,D$ in $\cC$. Further, we require
\begin{align}\label{balancedcohone}
\eta_{V,\one,W}=\ide_{\rF(V,W)}.
\end{align}

A \emph{morphism of $\cC$-balanced functors} $\phi\colon (\rF, \eta^{\rF})\to (\rG,\eta^{\rG})$ is a natural transformation $\phi\colon \rF\Rightarrow \rG$ such that the diagram
\begin{align}\label{balancedmor}
\vcenter{\hbox{
\xymatrix{\rF(V\triangleleft C, W)\ar^-{\eta^{\rF}_{V,C,W}}[rr]\ar_-{\phi_{V\triangleleft C, W}}[d]&&\rF(V,C\triangleright W)\ar^-{\phi_{V, C\triangleright W}}[d]\\
\rG(V\triangleleft C, W)\ar^-{\eta^{\rG}_{V,C,W}}[rr]&&\rG(V,C\triangleright W)
}}}
\end{align}
commutes. The category of $\cC$-balanced functors is denoted by
${\Fun}^{\bal{\cC}}(\cV\boxtimes \cW, \cT)$ where, again, all categories and functors are in $\Cat^c$.
\end{definition}

The following definition is a adapted from \cite{Scha}*{Definition 3.1} to working in $\Cat^c$.

\begin{definition}\label{relativetensordef}
Let $\cC$ be a monoidal category with a right $\cC$-module $\cV$, and a left $\cC$-module $\cW$ in $\Cat^c$. A category $\cT$ in $\Cat^c$, together with a $\cC$-balanced functor $\rT_{\cV,\cW}\colon \cV\boxtimes \cW\to \cT$, is a \emph{relative tensor product} of $\cV$ and $\cW$ over $\cC$ if for any category $\cD$, $\rT_{\cV,\cW}$ induces an equivalence of $\Bbbk$-linear categories
\begin{align*}
\Psi_{\cV,\cW}\colon {\Fun^c}(\cT, \cD)\isomorph \Fun^{\bal{\cC}}(\cV\boxtimes \cW, \cD), && \rG\mapsto \rG\rT_{\cV,\cW}.
\end{align*}
Further, part of the data is a fixed choice of a $\Bbbk$-linear functor
\[
\Phi_{\cV,\cW}\colon {\Fun^c}(\cT, \cD)\isomorph \Fun^{\bal{\cC}}(\cV\boxtimes \cW, \cD)
\]
such that there are natural isomorphisms
\begin{align*}
\theta_{\cV,\cW}\colon \ide \natisomorph \Phi_{\cV,\cW}\Psi_{\cV,\cW}, &&\tau_{\cV,\cW}\colon \Psi_{\cV,\cW}\Phi_{\cV,\cW}\natisomorph \ide
\end{align*}
satisfying the adjunction axioms (so that $\Psi_{\cV,\cW},\Phi_{\cV,\cW}$ form an \emph{adjoint equivalence} of $\Bbbk$-linear categories).

It follows that if such a category $\cT$ exists, then it is unique, up to unique adjoint equivalence of categories. We hence use the notation $\cT=\cV\boxtimes_\cC \cW$. 
\end{definition}

It follows from \cite{Scha}*{Proposition 3.4} that the tensor product $\boxtimes_{\cC}$ (if existent in this generality) has the following naturality properties. Given a functor $\rF\colon \cV\to \cV'$ of right $\cC$-module categories, and  $\rG\colon \cW\to \cW'$ of left $\cC$-module categories, there exists an induced factorization functor $$\rF\boxtimes_{\cC}\rG \colon \cV\boxtimes_{\cC}\cW\longrightarrow \cV'\boxtimes_{\cC}\cW',$$
which is unique up to unique isomorphism of functors.

This is proved by first observing that the composition functor
$$\rT_{\cV',\cW'} (\rF\boxtimes \rG)\colon \cV\boxtimes \cW\xrightarrow{\rF\boxtimes \rG} \cV'\boxtimes \cW'\xrightarrow{\rT_{\cV',\cW'}} \cV'\boxtimes_{\cC} \cW'$$
is $\cC$-balanced, using the $\cC$-balancing isomorphism determined by $\beta_{\rF(V),C,\rG(W)}$ for $\beta$ the $\cC$-balancing isomorphism of $\rT'$. Further, \cite{Scha}*{Proposition 3.4 (1)} gives an isomorphism of morphisms of balanced bimodules   $\phi_{\rF,\rG}\colon \rT_{\cV',\cW'}(\rF\boxtimes\rG)\natisomorph(\rF\boxtimes_{\cC}\rG) \rT_{\cV,\cW}$. 

Further, assume given pairs of right (respectively, left) $\cC$-module functors $\rF,\rF'\colon \cV\to \cV'$ (respectively,  $\rG, \rG'\colon \cW\to \cW'$) and natural transformations 
$\alpha\colon \rF\Longrightarrow \rF'$, $\beta \colon \rG\Longrightarrow \rG'$ of such functors, there exists a \emph{unique} natural transformation
$$\alpha\boxtimes_{\cC}\beta\colon \rF\boxtimes_{\cC}\rG \Longrightarrow \rF'\boxtimes_{\cC}\rG'$$
such that the following diagram commutes:
\begin{align}
\vcenter{\hbox{
\xymatrix{
\rT_{\cV',\cW'}(\rF\boxtimes\rG)\ar@{=>}[d]^{\phi_{\rF,\rG}}\ar@{=>}[rr]^{\alpha\boxtimes\beta}&& \rT_{\cV',\cW'}(\rF'\boxtimes\rG')\ar@{=>}[d]^{\phi_{\rF',\rG'}}\\
(\rF\boxtimes_{\cC}\rG)\rT_{\cV,\cW}\ar@{=>}[rr]^{\alpha\boxtimes_{\cC}\beta}&& (\rF'\boxtimes_{\cC}\rG')\rT_{\cV,\cW}.
}}}
\end{align}
These assignments are functorial to give a bifunctor
\begin{align*}
\boxtimes_{\cC}\colon \rmod{\cC}\times\lmod{\cC}\longrightarrow \Cat^c,&& (\cV,\cW)\mapsto \cV\boxtimes_{\cC}\cW.
\end{align*}
The following existence statement is proved in Appendix \ref{appendixA}.

\begin{theorem}\label{reltensorexistence}
Let $\cC$ be a monoidal category with a right $\cC$-module $\cV$ and a left $\cC$-module $\cW$ in $\Cat^c$. Then $\cV\boxtimes_\cC \cW$ exists in $\Cat^c$.
\end{theorem}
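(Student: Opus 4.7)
The plan is to realize $\cV \boxtimes_{\cC} \cW$ as an iterated pseudo-colimit in the $2$-category $\Cat^c$. As noted in Section~\ref{setup}, dually to \cite{BKP}*{Theorem 2.6} the $2$-category $\Cat^c$ admits all pseudo-colimits, computed by lifting the corresponding pseudo-colimits in $\Cat$ along the $2$-comonad $T$ whose coalgebras are finitely cocomplete $\Bbbk$-linear categories. In particular, $\Cat^c$ has coinserters, iso-coinserters and coequifiers. The strategy is to encode the notion of a $\cC$-balanced functor as a universal property assembled from these three basic $2$-colimits.

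First, I would consider the parallel pair of functors
\begin{align*}
\rL = \triangleleft \boxtimes \ide_{\cW},\quad \rR = \ide_{\cV}\boxtimes \triangleright \colon \cV\boxtimes \cC\boxtimes \cW \longrightarrow \cV\boxtimes \cW
\end{align*}
in $\Cat^c$, and form their iso-coinserter, producing a functor $\pi\colon \cV\boxtimes \cW \to \cT_1$ together with an invertible natural transformation $\widetilde{\eta}\colon \pi \rL \natisomorph \pi \rR$, universal with respect to pairs (functor, invertible $2$-cell) out of $\cV\boxtimes \cW$. Next, I would impose the coherence axioms of Definition~\ref{reltensordef} by taking coequifiers: one coequifier identifies the two composites of $\widetilde\eta$ around the pentagon~(\ref{balancedcoh}), and a further coequifier forces the unit condition~(\ref{balancedcohone}). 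Writing $\cT$ for the resulting category and $\rT_{\cV,\cW}\colon \cV\boxtimes \cW \to \cT$ for the composite functor, the structural $2$-cell of $\cT$ becomes the required $\cC$-balancing datum $\eta_{V,C,W}$.

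It remains to verify that $\cT$ so constructed satisfies the universal property of Definition~\ref{relativetensordef}. Unwinding the joint universal property of the iso-coinserter followed by the two coequifiers, a finite-colimit-preserving $\Bbbk$-linear functor $\rG\colon \cT\to \cD$ corresponds precisely to a functor $\rF\colon \cV\boxtimes \cW \to \cD$ equipped with an invertible natural transformation $\rF\rL \natisomorph \rF\rR$ satisfying~(\ref{balancedcoh}) and~(\ref{balancedcohone}); that is, to a $\cC$-balanced functor. Likewise, the $2$-cell universal property identifies natural transformations between such $\rG$'s with $\cC$-balanced natural transformations in the sense of~(\ref{balancedmor}). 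This produces the equivalence $\Psi_{\cV,\cW}\colon \Fun^c(\cT,\cD) \isomorph \Fun^{\bal{\cC}}(\cV\boxtimes \cW,\cD)$, functorially in $\cD$.

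The main obstacle I anticipate is twofold: on one hand, bookkeeping the combined universal property of iso-coinserter and coequifiers and checking it translates into exactly the coherence package~(\ref{balancedcoh})--(\ref{balancedmor}); on the other hand, upgrading the resulting equivalence $\Psi_{\cV,\cW}$ to an adjoint equivalence with specified unit and counit $(\theta_{\cV,\cW},\tau_{\cV,\cW})$, as demanded by Definition~\ref{relativetensordef}. The latter is formally available since any equivalence of categories admits an adjoint refinement, but one must fix such a refinement compatibly to make the assignment $(\cV,\cW)\mapsto \cV\boxtimes_{\cC}\cW$ into a genuine bifunctor, as discussed after Definition~\ref{relativetensordef}. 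Once this is done, uniqueness of $\cV\boxtimes_{\cC}\cW$ up to (unique) adjoint equivalence follows from the universal property, concluding the existence proof.
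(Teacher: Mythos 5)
Your construction is sound, but it organizes the argument differently from the paper, so a comparison is worthwhile. The paper (Appendix \ref{appendixA}) first works in $\Cat$ without any cocompleteness hypotheses: it encodes the balancing data as a single conical pseudo-colimit over an explicit finite $2$-category $\bfD$ (three objects, five $1$-cells, two $2$-cells), proves by hand an equivalence $\Cone(\Gamma,\cD)\simeq \Fun^{\bal{\cC}}(\cV\otimes\cW,\cD)$ between pseudo-cones and balanced functors, and only afterwards passes to $\Cat^c$ by closing under finite colimits via \cite{Kel}*{Theorem 5.35}. You instead build the object directly in $\Cat^c$ as an iterated PIE-colimit: an iso-coinserter of the pair $\triangleleft\boxtimes\ide_{\cW},\ \ide_{\cV}\boxtimes\triangleright$ followed by coequifiers enforcing (\ref{balancedcoh}) and (\ref{balancedcohone}). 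What your route buys is that the universal property is tautologically the balanced-functor property --- no redundant cone data ($\rF_1,\rF_2,\rho_{l_1},\rho_m,\dots$) needs to be shown equivalent to a balancing isomorphism, which is where most of the paper's explicit verification lives --- and the identification of $2$-cells with balanced natural transformations (\ref{balancedmor}) falls out of the coequifier's universal property. What it costs is a heavier reliance on the dual of \cite{BKP}: you need $\Cat^c$ to admit iso-coinserters and coequifiers with their universal property taken against finite-colimit-preserving functors, and you must check that the coequifier of two parallel pasted $2$-cells on $\cV\boxtimes\cC\boxtimes\cC\boxtimes\cW$ (both running from $\pi(V\triangleleft(C\otimes D),W)$ to $\pi(V,C\triangleright(D\triangleright W))$) really does impose exactly the pentagon; this is routine but should be said. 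The two remaining points you flag --- upgrading $\Psi_{\cV,\cW}$ to an adjoint equivalence and fixing these choices compatibly to get bifunctoriality --- are handled in the paper only by the closing remark that any equivalence can be promoted to an adjoint one (\cite{Mac}*{Section IV.4}) and by the naturality of the universal properties, so your treatment is at the same level of detail as the original.
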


\begin{remark}
The existence statement of $\cV\boxtimes_{\cC}\cW$ can, in principle, be proved in any cocomplete $2$-category which has finite pseudo-colimits. For the $2$-category $\Cat^c$ of cocomplete $\Bbbk$-linear categories, pseudo-colimits exist by \cite{BKP}.\end{remark}

\begin{corollary}\label{bimoduletensor}
If $\cV$ is an $\cM$-$\cB$-bimodule, and $\cW$ is a $\cB$-$\cN$-bimodule, then $\cV\boxtimes_{\cB}\cW$ can be given the structure of an $\cM$-$\cN$-bimodule.
\end{corollary}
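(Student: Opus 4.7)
The plan is to use the universal property of $\cV\boxtimes_{\cB}\cW$ as a relative tensor product (Definition \ref{relativetensordef}), together with the functoriality of $\boxtimes_{\cB}$ already established after that definition, to induce the left $\cM$-action and the right $\cN$-action. For each $M\in \cM$, the endofunctor $L_M:=M\triangleright_{\cV}(-)\colon \cV\to\cV$ is a right $\cB$-module functor with coherence data given by the mixed associator $\zeta^{\cV}$ of the $\cM$-$\cB$-bimodule $\cV$; dually, for each $N\in \cN$, the endofunctor $R_N:=(-)\triangleleft_{\cW}N\colon \cW\to\cW$ is a left $\cB$-module functor with coherence data $\zeta^{\cW}$. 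Applying the bifunctor $\boxtimes_{\cB}$ then produces endofunctors $L_M\boxtimes_{\cB}\ide_{\cW}$ and $\ide_{\cV}\boxtimes_{\cB} R_N$ on $\cV\boxtimes_{\cB}\cW$.

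More explicitly, the left $\cM$-action on $\cV\boxtimes_{\cB}\cW$ is obtained as follows. First I would define, for each $M$, the composite
\[
\cV\boxtimes\cW\xrightarrow{L_M\boxtimes \ide_{\cW}}\cV\boxtimes\cW\xrightarrow{\rT_{\cV,\cW}}\cV\boxtimes_{\cB}\cW,
\]
and equip it with the $\cB$-balancing
\[
\rT_{\cV,\cW}(M\triangleright V\triangleleft B,W)\xrightarrow{\rT(\zeta^{\cV,-1}_{M,V,B},W)}\rT_{\cV,\cW}(M\triangleright(V\triangleleft B),W)\xrightarrow{\eta}\rT_{\cV,\cW}(M\triangleright V,B\triangleright W),
\]
where $\eta$ is the $\cB$-balancing of $\rT_{\cV,\cW}$ itself. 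The coherence diagram (\ref{balancedcoh}) is then checked by expanding both legs using (\ref{bimodcoherence2}) for $\zeta^{\cV}$ and the coherence of $\eta$. By the universal property, this balanced functor factors uniquely through a functor $M\triangleright(-)\colon \cV\boxtimes_{\cB}\cW\to \cV\boxtimes_{\cB}\cW$. Next, the associator $\chi^{\cV}$ and the unit identification $\one\triangleright V=V$ supply, after passing through the universal property once more, natural isomorphisms $\chi_{M,M',Z}$ on $\cV\boxtimes_{\cB}\cW$ that upgrade the assignment $M\mapsto M\triangleright(-)$ to a strong monoidal functor $\cM\to \Ends^c(\cV\boxtimes_{\cB}\cW)$. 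The right $\cN$-action is constructed in the symmetric manner, using $R_N$ and $\zeta^{\cW}$.

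It remains to produce the middle associator $\zeta$ between the two actions and to verify the bimodule coherences (\ref{modulecoherence}), (\ref{modulecoherence2}), (\ref{bimodcoherence1}), (\ref{bimodcoherence2}). The idea is that $(M\triangleright(-))\boxtimes_{\cB}((-)\triangleleft N)$ and $((-)\triangleleft N)\circ(M\triangleright(-))$ on pure tensors both equal $\rT_{\cV,\cW}(M\triangleright V, W\triangleleft N)$, so $\zeta$ is essentially the identity on representables; the uniqueness clause of the functoriality of $\boxtimes_{\cB}$ then lifts all of the coherences (\ref{modulecoherence})--(\ref{bimodcoherence2}) from the bimodule coherences already satisfied by $\cV$ and $\cW$. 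I expect the main obstacle to be the sheer bookkeeping of these coherence verifications: each diagram reduces, after applying $\rT_{\cV,\cW}$, to a pasting in $\cV\boxtimes\cW$ involving $\chi^{\cV}, \xi^{\cV}, \zeta^{\cV}$, the corresponding data for $\cW$, and the $\cB$-balancing $\eta$, and the uniqueness of factorization through $\rT_{\cV,\cW}$ is what ultimately closes each diagram. Once the coherences hold, the combined monoidal functor $\cM\boxtimes\cN^{\oop}\to \Ends^c(\cV\boxtimes_{\cB}\cW)$ is assembled from the two actions and their compatibility $\zeta$, giving the desired $\cM$-$\cN$-bimodule structure.
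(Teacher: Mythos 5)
Your proposal is correct, but it takes a different route from the paper: the paper's entire proof is a citation. Having established existence of $\cV\boxtimes_{\cB}\cW$ (Theorem \ref{reltensorexistence}), the author simply invokes \cite{Scha}*{Proposition 3.20}, together with the remark that bimodule categories and relative tensor products assemble into a tricategory as in \cite{Scha}*{Section 3.2}. What you have written is, in effect, a self-contained proof of the content of that cited proposition: you induce the outer actions by exhibiting $\rT_{\cV,\cW}(L_M\boxtimes\ide_{\cW})$ and $\rT_{\cV,\cW}(\ide_{\cV}\boxtimes R_N)$ as $\cB$-balanced functors (with balancings assembled from $\zeta^{\cV}$, respectively $\zeta^{\cW}$, and the universal balancing $\eta$), factor through the universal property, and then use the uniqueness clause of the factorization to produce the associators $\chi$, $\xi$, $\zeta$ and to lift all coherences from those of $\cV$ and $\cW$. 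This is exactly the technique the paper itself deploys in the proof of Theorem \ref{relativeproductproperties} to induce the monoidal structure on $\cM\boxtimes_{\cB}\cN$, so your argument is consistent with the paper's methods and buys a reader a proof that does not depend on the external reference; the paper's citation buys brevity and situates the statement inside the general tricategorical framework. Two small points of care: in your displayed balancing the labelling of source and target forces the first arrow to be $\zeta^{\cV}$ rather than $\zeta^{\cV,-1}$ (or, equivalently, the first object should be $\rT_{\cV,\cW}(M\triangleright(V\triangleleft B),W)$), a direction slip that does not affect the argument; and the step where "uniqueness closes each diagram" does require checking that the two composite endofunctors being compared carry the \emph{same} balancing when restricted along $\rT_{\cV,\cW}$, since the uniqueness of induced natural transformations is only available for morphisms of balanced functors — this holds here because both balancings are built from the same $\eta$, $\zeta^{\cV}$, $\zeta^{\cW}$, but it is the one verification you should not wave away.
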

\begin{proof}
Based on the existence statement of $\cV\boxtimes_{\cC}\cW$ in this setup, we can now use \cite{Scha}*{Proposition 3.20} to obtain the result. Note that the full structure of the collection of categorical bimodules with relative tensor products is that of a tricategory as detailed in \cite{Scha}*{Section 3.2}.
\end{proof}

We now provide some examples that will be important in the paper.

\begin{definition}
Let $A$ and $B$ be algebra objects in $\cB$. We denote by $\llmod{A}{B}(\cB)$ the category of left $A$-$B$ modules in $\cB$. That is, an object $V$ of $\llmod{A}{B}(\cB)$ is an object of $\cB$ together with a left $A$-action $\triangleright_A\colon A\otimes V\to V$ and left $B$-action $\triangleright_B\colon B \otimes V\to V$ which commute in the sense that
\begin{equation}
\triangleright_A(\ide_A\otimes \triangleright_B)=\triangleright_B(\ide_B\otimes \,\triangleright_A)(\Psi_{A,B}\otimes \ide_V).
\end{equation}
A morphism in $\llmod{A}{B}(\cB)$ commutes with both the left $A$-action and $B$-action. 

Similarly, we define the category $\lrmod{A}{B}(\cB)$ and $\rrmod{A}{B}(\cB)$. In the former, we have a left $A$-action and a right $B$-action which commute, where in the later both are right actions.
\end{definition}

It in fact follows that $\llmod{A}{B}(\cB)$ is equivalent to $\lmod{A\otimes B}(\cB)$ in a natural way. However, when  working with bialgebras we shall see that $\llmod{A}{B}(\cB)$ can still be monoidal even though $A\otimes B$ is not a bialgebra object in $\cB$. 

\begin{lemma}
The category $\llmod{A}{B}(\cB)$ (or $\lrmod{A}{B}(\cB)$, $\rrmod{A}{B}(\cB)$) is a bimodule over $\cB$, with left and right action given by the induced action on the tensor product in $\cB$. That is, given objects $(V,\triangleright_A,\triangleright_B)$ of $\llmod{A}{B}(\cB)$ and $X$ of $\cB$, $V\otimes X$ is an object in $\llmod{A}{B}(\cB)$ with actions given by
\begin{align*}
\triangleright_{A}\otimes \ide_X\colon A\otimes V\otimes X\longrightarrow V\otimes X, && \triangleright_{B}\otimes \ide_X\colon B\otimes V\otimes X\longrightarrow V\otimes X;
\end{align*}
and $X\otimes V$ is an object in $\llmod{A}{B}(\cB)$ with actions given by
\begin{align*}
(\ide_X\otimes \,\triangleright_{A})(\Psi_{A,X}\otimes \ide_V)&\colon A\otimes X\otimes V\longrightarrow X\otimes V, \\(\ide_X\otimes \,\triangleright_{B})(\Psi_{B,X}\otimes \ide_V)&\colon B\otimes X\otimes V\longrightarrow X\otimes V.
\end{align*}
\end{lemma}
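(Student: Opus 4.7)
The plan is to verify in turn (i) that $V\otimes X$ and $X\otimes V$ are well-defined objects of $\llmod{A}{B}(\cB)$, (ii) that the assignments are functorial and satisfy strict associativity and unitality (recalling that $\cB$ is treated as strict), and (iii) that the resulting left and right $\cB$-actions on $\llmod{A}{B}(\cB)$ are compatible as a bimodule structure. The cases $\lrmod{A}{B}(\cB)$ and $\rrmod{A}{B}(\cB)$ are analogous and only require swapping the role of $\Psi$ with $\Psi^{-1}$ on the appropriate side.

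For the right action $V\triangleleft X := V\otimes X$, the verifications are essentially trivial: tensoring the $A$-action axioms $\triangleright_A(m_A\otimes\ide_V)=\triangleright_A(\ide_A\otimes\triangleright_A)$ and $\triangleright_A(1_A\otimes\ide_V)=\ide_V$ on the right by $\ide_X$ gives the corresponding $A$-module axioms for $V\otimes X$ (similarly for $B$), and the commutativity relation for $\triangleright_A$ and $\triangleright_B$ tensored on the right by $\ide_X$ yields the commutativity relation on $V\otimes X$. Functoriality in $X$ and strict associativity $(V\triangleleft X)\triangleleft Y = V\triangleleft(X\otimes Y)$ follow from strict associativity of $\otimes$ in $\cB$.

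For the left action $X\triangleright V := X\otimes V$, the verification is where the braiding enters. I would first check that $(\ide_X\otimes\triangleright_A)(\Psi_{A,X}\otimes\ide_V)$ defines an $A$-action on $X\otimes V$; this uses naturality of $\Psi$ applied to $m_A\colon A\otimes A\to A$ together with the hexagon identity $\Psi_{A\otimes A,X}=(\Psi_{A,X}\otimes\ide_A)(\ide_A\otimes\Psi_{A,X})$, after which the associativity of $\triangleright_A$ on $V$ propagates to $X\otimes V$. The unit axiom follows because $\Psi_{I,X}=\ide_X$ (strict unitality of the braiding). The same argument works for $\triangleright_B$. The main obstacle here is the commutativity of $\triangleright_A$ and $\triangleright_B$ on $X\otimes V$; here I would draw the hexagon/naturality diagram showing that
\[ (\ide_X\otimes\,\triangleright_B)(\ide_X\otimes\ide_B\otimes\,\triangleright_A)(\ide_X\otimes\Psi_{A,B}\otimes\ide_V)(\Psi_{A\otimes B,X}\otimes\ide_V) \]
coincides, after naturality of $\Psi$ in the first slot (applied to the two actions $\triangleright_A,\triangleright_B$) and the hexagon identity for $\Psi_{A\otimes B,X}$, with the corresponding composite in the opposite order. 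This reduces to the commutativity relation on $V$ tensored with identities on $X$, conjugated by braidings.

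Finally I would verify the bimodule axioms. Strict associativity of $\otimes$ makes $(X\triangleright V)\triangleleft Y$ and $X\triangleright(V\triangleleft Y)$ equal to $X\otimes V\otimes Y$ as underlying objects of $\cB$, and a direct inspection shows that on this object both sides induce the $A$-action $(\ide_X\otimes\,\triangleright_A\otimes\ide_Y)(\Psi_{A,X}\otimes\ide_V\otimes\ide_Y)$ and likewise the same $B$-action, so the structural isomorphism $\zeta$ of Proposition \ref{bimoduledata} may be taken to be the identity. Associativity of the left action, $(X\otimes Y)\triangleright V = X\triangleright(Y\triangleright V)$, reduces to the hexagon $\Psi_{A,X\otimes Y}=(\ide_X\otimes\Psi_{A,Y})(\Psi_{A,X}\otimes\ide_Y)$ applied before $\triangleright_A$, and similarly for $B$. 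Functoriality of the actions in $\cB$-morphisms $f\colon X\to X'$ follows from naturality of $\Psi$ in the second argument. This establishes the $\cB$-bimodule structure on $\llmod{A}{B}(\cB)$, and the cases of $\lrmod{A}{B}(\cB)$ and $\rrmod{A}{B}(\cB)$ are obtained by the same argument with the appropriate braidings.
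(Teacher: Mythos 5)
Your proof is correct. The paper states this lemma without proof (it is a routine verification left to the reader), and your argument — tensoring the module axioms with $\ide_X$ for the right action, and using naturality of $\Psi$ together with the hexagon identity $\Psi_{A\otimes A,X}=(\Psi_{A,X}\otimes\ide_A)(\ide_A\otimes\Psi_{A,X})$ for the left action and for the commutativity of the $A$- and $B$-actions on $X\otimes V$ — is exactly the standard verification the author intends, including the observation that the bimodule constraint $\zeta$ can be taken to be the identity in the strict setting.
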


\begin{proposition}\label{Bmodreltensor}
The category $\llmod{A}{B}(\cB)$ satisfies the universal property of $\lmod{A}(\cB)\boxtimes_{\cB}\lmod{B}(\cB)$, $\lrmod{A}{B}(\cB)$ satisfies the universal property of $\lmod{A}(\cB)\boxtimes_{\cB}\rmod{B}(\cB)$, and $\rrmod{A}{B}(\cB)$ satisfies the universal property of $\rmod{A}(\cB)\boxtimes_{\cB}\rmod{B}(\cB)$ as $\cB$-bimodules.
\end{proposition}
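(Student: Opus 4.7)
The three statements follow from the same argument with appropriately adjusted left/right conventions; I focus on the first and indicate the remaining variants at the end. The strategy is to verify the universal property of Definition \ref{relativetensordef} directly, by constructing the balanced tensor functor together with a quasi-inverse to the corresponding restriction functor.

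First, I would introduce the candidate $\cB$-balanced tensor functor
$$\rT\colon \lmod{A}(\cB)\boxtimes\lmod{B}(\cB)\longrightarrow \llmod{A}{B}(\cB),\qquad V\otimes W\longmapsto V\otimes W,$$
where $V\otimes W$ carries the $A$-action of $V$ on the left factor, and the $B$-action that passes $B$ past $V$ via the braiding $\Psi_{B,V}$ and then acts on $W$; these two left actions commute in the sense required of $\llmod{A}{B}(\cB)$, by naturality of the braiding together with the fact that the $A$-action on $V$ is a morphism in $\cB$. Since the $\cB$-bimodule structures from the preceding lemma satisfy $V\triangleleft X=V\otimes X$ and $X\triangleright W=X\otimes W$, the $\cB$-balancing $\eta_{V,X,W}\colon \rT(V\triangleleft X,W)\to \rT(V,X\triangleright W)$ may be taken to be the identity on $V\otimes X\otimes W$, and the coherences (\ref{balancedcoh}), (\ref{balancedcohone}) hold trivially.

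Next, given any $\cD$ in $\Cat^c$ and any $\cB$-balanced functor $(\rF,\eta^{\rF})\colon \lmod{A}(\cB)\boxtimes\lmod{B}(\cB)\to\cD$, I construct its factorization $\tilde\rF\colon \llmod{A}{B}(\cB)\to\cD$ as follows. On a free $A$-$B$-bimodule $A\otimes X\otimes B$ for $X\in\cB$, which lies in the image of $\rT$ as $\rT(A\otimes X,B)$, set
$$\tilde\rF(A\otimes X\otimes B):=\rF(A\otimes X,\,B);$$
the balancing $\eta^{\rF}_{A,X,B}$ canonically identifies this with $\rF(A,\,X\otimes B)$, the other natural presentation. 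For a general object $M\in\llmod{A}{B}(\cB)$ I use the bar-type reflexive coequalizer presentation of $M$ by free bimodules, and define $\tilde\rF(M)$ as the coequalizer in $\cD$ of the induced parallel pair; morphisms are handled similarly by lifting to these resolutions.

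The main obstacle is to verify that $\tilde\rF$ is well-defined and functorial, i.e.\ that the two parallel maps in the bar resolution, once transported through $\rF$ and the various applications of $\eta^{\rF}$, yield a coherent pair in $\cD$ independent of how factors of $A$ and $B$ are distributed between the two arguments of $\rF$. This is precisely what coherence (\ref{balancedcoh}) enforces: it controls how $\eta^{\rF}$ interacts with the passage of multiple tensor factors from the right-module side to the left-module side. Once $\tilde\rF$ is established, the assignment $\rF\mapsto \tilde\rF$ assembles into a $\Bbbk$-linear functor $\Phi_{\cV,\cW}$ quasi-inverse to $\Psi_{\cV,\cW}=(-)\rT$: on generating pure tensors and free bimodules the round-trip compositions are identities, and these extend by colimit-preservation to the adjoint equivalence $\theta_{\cV,\cW},\tau_{\cV,\cW}$ required in Definition \ref{relativetensordef}. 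The $\cB$-bimodule structure coming from Corollary \ref{bimoduletensor} matches the natural $\cB$-bimodule structure on $\llmod{A}{B}(\cB)$ from the preceding lemma, since both are computed on pure tensors by inserting the additional $\cB$-object via the appropriate braidings; the cases $\lrmod{A}{B}(\cB)$ and $\rrmod{A}{B}(\cB)$ follow from the same construction after the evident side-swaps.
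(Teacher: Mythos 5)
Your proposal is correct and follows essentially the same route as the paper: define the evident functor $\rT$ sending $V\otimes W$ to the tensor product in $\cB$ with the induced commuting actions (with trivial balancing), and then factor an arbitrary $\cB$-balanced functor through a coequalizer presentation by free modules, using that the functors involved preserve finite colimits. The only cosmetic differences are that the paper treats the $\lrmod{A}{B}(\cB)$ case as primary (reducing the others via $\lmod{B}(\cB)\simeq\rmod{B^{\oop}}(\cB)$) and resolves only the $B$-action, presenting $X$ as the coequalizer of $X\otimes B\otimes B\rightrightarrows X\otimes B$ rather than using a full bar resolution by free $A$-$B$-bimodules.
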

\begin{proof}
We will prove the statement for $\lrmod{A}{B}(\cB)$. For the other statements, note that the equivalence $\lmod{B}(\cB)\simeq \rmod{B^{\oop}}(\cB)$, where $B^{\oop}$ has the opposite product $m\Psi$, can be used. The equivalence is given by mapping a left $B$-module  $(V,\triangleright_B)$ to the right $B^{\oop}$-module $(V, \triangleright_B\Psi)$.

Consider the functor $\rT\colon \lmod{A}(\cB)\boxtimes_{\cB}\rmod{B}(\cB)\longrightarrow\lrmod{A}{B}(\cB)$ which sends $V\otimes W$ to $V\otimes^{\cB} W$, taking tensor products $\otimes^{\cB}$ in $\cB$. The left $A$-action is induced on the first tensorand, while the right $B$-action is induced on the second tensorand. Given a $\cB$-balanced functor $\rG\colon \lmod{A}(\cB)\boxtimes_{\cB}\rmod{B}(\cB)\to \cT$, we obtain a factorization $\rH\colon \lrmod{A}{B}(\cB)\to \cT$ by adapting an argument similar to \cite{DSS}*{Theorem 3.3}. Note that the argument used there does not need the functor $\rG$ to be exact, but rather to preserve colimits in both components. Let $X$ be an object in $\lrmod{A}{B}(\cB)$. Then $X$ is the coequalizer of the diagram
$$\xymatrix{
(X\otimes B)\otimes B\cong X\otimes (B\otimes B)\ar@/^/[rr]^-{\triangleright_{B}\otimes \ide_B}\ar@/_/[rr]_-{\ide_X\otimes m_B}&&X\otimes B,
}
$$
which consists of morphisms of left $A$-modules and right $B$-modules. This diagram exists in $\lmod{A}(\cB)\boxtimes \rmod{B}(\cB)$. That is, the (non-commuting) diagram
\begin{align}\label{preimagediag}
\xymatrix{
(X\triangleleft B)\boxtimes B\ar[d]_{\eta_{X,B,B}}\ar[rr]^{\triangleright_B\boxtimes \ide}&&X\boxtimes B\ar@{=}[d]\\
X\boxtimes (B\triangleright B)\ar[rr]^{\ide\boxtimes m_B}&& X\boxtimes B\\
}
\end{align}
is mapped to it under the canonical functor $\rT$ from above. Hence, we can define $\rH(X)$ to be the coequalizer of $\rT$ applied to the compositions in diagram (\ref{preimagediag}). As colimits commute with colimits, the functor $\rH$ obtained this way will preserve finite colimits.
\end{proof}

When more structure is present (for example, an abelian structure), the existence of a universal category describing $\cC$-balanced functors is not guaranteed as a category having such structure. For a finite $\Bbbk$-tensor category $\cC$, it was shown in \cite{DSS} that $\cV\boxtimes_{\cC}\cW$ exists as a finite $\Bbbk$-linear category, given that $\cV$ and $\cW$ are finite. This uses the characterization from \cite{EGNO}*{2.11.6} (and \cite{Ost2}*{Theorem~1} for the semisimple case) that finite exact modules are of the form $\lmod{A}(\cC)$ as right $\cC$-modules, for an algebra object $A$ in $\cC$.

\subsection{Augmented Monoidal Categories}\label{augmentedcats}

We want to work with monoidal categories that are augmented by a braided monoidal category $\cB$. This notion  is a categorical analogue of a $C$-algebra over a commutative ring $C$ which is also $C$-augmented.

\begin{definition}\label{augmentedmon}
A \emph{$\cB$-augmented monoidal category $\cM$} is a monoidal category $\cM$ in $\Cat^c$ equipped with the additional data of monoidal functors
$$\adj{\rF}{\cM}{\cB}{\rT}
$$
and natural isomorphisms
\begin{align*}
\tau\colon \rF\rT&\stackrel{\sim}{\Longrightarrow} \ide_{\cB},& 
\sigma\colon \otimes (\ide_{\cM}\boxtimes \rT)&\stackrel{\sim}{\Longrightarrow} \otimes^{\oop} (\ide_{\cM}\boxtimes \rT),
\end{align*}
such that for any objects $V$ of $\cB$ and $X$ of $\cM$,
\begin{equation}\label{sigmadesent}
\rF(\sigma_{X,V})=\mu_{\rT(V),X}^{\rF}(\tau_V^{-1}\otimes \ide_{\rF(X)})\Psi_{\rF(X),V}(\ide_{\rF(X)}\otimes \tau_V)\left(\mu_{X,\rT(V)}^{\rF}\right)^{-1},
\end{equation}
or, equivalently
\begin{equation}\label{sigmadesent2}
\rF(\sigma_{X,V})=\mu_{\rT(V),X}^{\rF}\Psi_{\rF(X),\rF\rT(V)}\left(\mu_{X,\rT(V)}^{\rF}\right)^{-1}.
\end{equation}
That is, $\sigma$ descents to $\Psi$ under $\rF$. Equivalently, the following diagrams commute for any objects $X$ or $\cM$ and $V$ of $\cB$
\begin{align}\label{augmentedbraiding}\vcenter{\hbox{
\xymatrix{\rF(X\otimes \rT V)\ar[rrr]^{\rF(\sigma_{X,V})}\ar[d]_{\left(\mu_{X,\rT V}^{\rF}\right)^{-1}}&&&\rF(\rT V \otimes X)\ar[d]^{\left(\mu_{\rT V ,X}^{\rF}\right)^{-1}}\\
\rF X\otimes \rF\rT V\ar[d]_{\ide_{\rF X}\otimes \tau_V}\ar[rrr]^{\Psi_{\rF X ,\rF\rT V }}&&&\rF \rT V\otimes \rF X\ar[d]^{\tau_V\otimes \ide_{\rF X }}\\
\rF X\otimes V\ar[rrr]^{\Psi_{\rF X,V}}&&&V\otimes \rF X.
}}}
\end{align}
The natural isomorphisms are required to be coherent with the structure of $\cM$ and $\cB$.
That is, the following diagrams commute for any objects $X,Y$ in $\cM$, $V,W$ in $\cB$
\begin{gather}\label{augmenteddiag1}
\vcenter{\hbox{
\xymatrix{
(X\otimes Y)\otimes \rT V \ar[rr]^{\sigma_{X\otimes Y,V}}\ar[rd]_{\ide_X\otimes \sigma_{X,V}}&&\rT V\otimes (X\otimes Y),\\
&X\otimes  \rT V\otimes Y\ar[ru]_{\sigma_{X,V}\otimes \ide_Y}&
}}}
\end{gather}
\begin{gather}
\label{augmenteddiag2}
\vcenter{\hbox{
\xymatrix{
X\otimes \rT(V\otimes W)\ar[rr]^-{\sigma_{X,V\otimes W}}\ar[d]_{\ide_X\otimes \left(\mu^{\rT}_{V,W}\right)^{-1}}&& \rT(V\otimes W)\otimes X\\
X\otimes \rT V \otimes \rT W\ar[rd]_{\sigma_{X,V}\otimes \ide_{\rT W}}&& \rT V\otimes \rT W\otimes X\ar[u]_{\mu^{\rT}_{V,W}\otimes \ide_X},\\
&\rT V\otimes X\otimes \rT W\ar[ru]_{\ide_{\rT V}\otimes \sigma_{X,W}}&
}}}
\end{gather}
\begin{gather}
\label{augmenteddiag3}
\vcenter{\hbox{
\xymatrix@C=40pt{
\rF\rT(V)\otimes \rF\rT(W)\ar[r]^{\mu^{\rF}_{\rT V,\rT W}}\ar[rd]_{\tau_V\otimes \tau_W}&\rF(\rT(V)\otimes \rT(W))\ar[r]^{\rF(\mu^{\rT}_{V,W})}&\rF\rT(V\otimes W)\ar[ld]^{\tau_{V\otimes W}}.\\
&V\otimes W&
}}}
\end{gather}
The last condition states that $\tau$ is a morphism of monoidal functors, cf. \cite{EGNO}*{Definition 2.4.8}.
Moreover, we require that for any objects $X$ of $\cM$ and $V$ of $\cB$,
\begin{align}
\sigma_{X,\one}&=\ide_{X}, &\sigma_{\one, V}&=\ide_{\rT(V)}, &\tau_{\one}=\ide_{\one}.
\end{align}

A \emph{monoidal functor of $\cB$-augmented} monoidal categories $\rG \colon \cM\to \cN$ is a monoidal functor such that the there exists isomorphisms of monoidal functors
\begin{align}
\theta=\theta^{\rG}\colon \rG\rT_{\cM}\natisomorph\rT_{\cN}, && \phi=\phi^{\rG}\colon \rF_{\cM}\natisomorph \rF_{\cN}\rG.
\end{align}
Further, the compatibility condition that the diagrams
\begin{gather}\vcenter{\hbox{
\xymatrix{\rG(X\otimes \rT_{\cM}(V))\ar_-{\left(\mu_{X,\rT_{\cM}(V)}^{\rG}\right)^{-1}}[d]  \ar^-{\rG(\sigma^{\cM}_{X,V})}[rr]&&\rG(\rT_{\cM}(V)\otimes X)\ar^-{\left(\mu_{\rT_{\cM}(V),X}^{\rG}\right)^{-1}}[d]\\
\rG(X)\otimes \rG\rT_{\cM}(V)\ar_-{\rG(X)\otimes \theta_V}[d]&&\rG\rT_{\cM}(V)\otimes \rG(X)\ar^-{\theta_V\otimes \rG(X)}[d]\\
\rG(X)\otimes \rT_{\cN}(V)\ar^-{\sigma^{\cN}_{\rG(X),V}}[rr]&&\rT_{\cN}(V)\otimes \rG(X),
}}}\label{augmentedfunctor1}\\
\vcenter{\hbox{
\xymatrix{
\rF_{\cM}\rT_{\cM}(V)\ar[rr]^{\phi_{\rT_{\cM}(V)}}\ar[drr]_{\tau^{\cM}_V}&&\rF_{\cN}\rG\rT_{\cM}(V)\ar[rr]^{\rF_{\cN}(\theta_V)}&&\rF_{\cN}\rT_{\cN}(V) \ar[dll]^{\tau^{\cN}_V}\\
&&V&&
}}}
\label{augmentedfunctor2}
\end{gather}
commute for all objects $X$ of $\cM$, $V$ of $\cB$ is required. In particular, $\theta_{\one}=\ide_{\one}, \phi_{\one}=\ide_{\one}$.
\end{definition}
We define the $\Bbbk$-linear $2$-category $\MonCat_\cB$ of $\cB$-augmented categories of $\cM$ as having 
 $\cB$-augmented monoidal functors as $1$-morphisms, and all monoidal natural transformations as $2$-morphisms.
Note that given two functors $\rG\colon \cM\to \cN$, $\rH\colon \cN\to \cO$ of $\cB$-augmented monoidal categories, the composition $\rH\rG$ is a functor of $\cB$-augmented monoidal categories with the natural isomorphisms given by 
\begin{align}\label{composedaugmented}
\theta^{\rH\rG}_V&=\theta_{V}^{\rH}\rH(\theta_V^{\rG}),&
\phi^{\rH\rG}_X&=\phi^{\rH}_{\rG (X)}\phi^{\rG}_X.
\end{align}

\begin{example}\label{trivexample}
Every monoidal category in $\Cat^c$ which is complete under arbitrary countable biproducts is $\Vect$-augmented for the category of $\Bbbk$-vector spaces. 
For example, for a bialgebra $B$ over $\Bbbk$, $\lmod{B}$ gives such a category.
\end{example}

\begin{example}
If $\cM$ is a $\cB$-augmented monoidal category, then  $\cM^{\oop}$ is $\ov{\cB}$-augmented. The isomorphism $\sigma^{\oop}\colon \otimes ^{\oop}(\ide_{\cM}\boxtimes \rT)\natisomorph (\otimes ^{\oop})^{\oop}(\ide_{\cM}\boxtimes \rT)$ is taken to be $\sigma^{-1}$.
\end{example}

\begin{example}\label{braidedaugmented}
Let $\cB$ be a braided monoidal category. Then $\cB$ is $\cB$-augmented, with $\rF=\ide_{\cB}$, $\rT=\ide_{\cB}$, and $\tau=\ide$, $\sigma=\Psi$, the braiding. The next lemma generalizes this example and clarifies the relation to braided monoidal categories.
\end{example}

\begin{lemma}\label{augmentedbraided}
If $\cM$ is braided monoidal and both $\rF$, $\rT$ are functors of braided monoidal categories, such that $\tau$ is an isomorphism of monoidal functors, then  $\cM$ can be given the structure of a $\cB$-augmented monoidal category.
\end{lemma}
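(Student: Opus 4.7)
The plan is to take the obvious candidate $\sigma_{X,V} := \Psi^{\cM}_{X,\rT(V)}$, where $\Psi^{\cM}$ is the braiding on $\cM$, and check that each of the required coherences follows from either the hexagon axioms in $\cM$, the naturality of $\Psi^{\cM}$, or the given hypotheses that $\rF$ is braided monoidal and $\tau$ is monoidal. Naturality in $X$ and $V$ is immediate from naturality of $\Psi^{\cM}$ (applied to $\rT(f)$ for morphisms $f$ in $\cB$).

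For the descent condition (\ref{sigmadesent2}), I would use that $\rF$ being braided monoidal gives the identity
\[
\rF(\Psi^{\cM}_{X,\rT V})\,\mu^{\rF}_{X,\rT V}=\mu^{\rF}_{\rT V,X}\,\Psi^{\cB}_{\rF X,\rF\rT V},
\]
which rearranges to exactly the right-hand side of (\ref{sigmadesent2}); equivalently, precomposing with $\ide_{\rF X}\otimes \tau_V$ and postcomposing with $\tau_V^{-1}\otimes \ide_{\rF X}$ and using naturality of $\Psi^{\cB}$ in its second slot gives (\ref{sigmadesent}). Diagram (\ref{augmenteddiag1}) is then literally the hexagon axiom for $\Psi^{\cM}$ with third argument $\rT V$.

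For diagram (\ref{augmenteddiag2}), the key move is to apply naturality of $\Psi^{\cM}(-,-)$ to the morphism $\mu^{\rT}_{V,W}\colon \rT V\otimes \rT W\to \rT(V\otimes W)$ in the second slot, obtaining
\[
\Psi^{\cM}_{X,\rT(V\otimes W)}\circ(\ide_X\otimes \mu^{\rT}_{V,W})=(\mu^{\rT}_{V,W}\otimes \ide_X)\circ \Psi^{\cM}_{X,\rT V\otimes \rT W},
\]
and then expanding $\Psi^{\cM}_{X,\rT V\otimes \rT W}$ by the hexagon axiom into $(\ide_{\rT V}\otimes \Psi^{\cM}_{X,\rT W})(\Psi^{\cM}_{X,\rT V}\otimes \ide_{\rT W})$. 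Diagram (\ref{augmenteddiag3}) is, word for word, the condition that $\tau\colon \rF\rT\natisomorph \ide_{\cB}$ is an isomorphism of monoidal functors, so it holds by hypothesis. Finally, the unit normalizations $\sigma_{X,\one}=\ide_X$, $\sigma_{\one,V}=\ide_{\rT V}$ and $\tau_{\one}=\ide_{\one}$ follow from strict unitality of $\rT$ (so $\rT(\one)=\one$), the standard strict-unital normalization of any braiding, and the monoidality of $\tau$, respectively. There is no essential obstacle here; the only point requiring care is bookkeeping the monoidal-functor constraints $\mu^{\rF}$ and $\mu^{\rT}$ when translating between $\Psi^{\cM}$ and $\Psi^{\cB}$, which is precisely what the braided-functor hypothesis on $\rF$ (and implicitly $\rT$) is designed to absorb.
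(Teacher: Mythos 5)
Your proposal is correct and follows essentially the same route as the paper's proof: defining $\sigma_{X,V}:=\Psi^{\cM}_{X,\rT(V)}$, deriving (\ref{augmenteddiag1})--(\ref{augmenteddiag2}) from the hexagon axioms and naturality, obtaining the descent condition (\ref{augmentedbraiding}) from $\rF$ being a braided monoidal functor, and reading (\ref{augmenteddiag3}) as the hypothesis that $\tau$ is a morphism of monoidal functors. The only (harmless) divergence is that you observe (\ref{augmenteddiag2}) needs only naturality of $\Psi^{\cM}$ applied to $\mu^{\rT}_{V,W}$, where the paper also cites $\rT$ being braided.
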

\begin{proof}
If $\cM$ is braided monoidal, and $\rF$, $\rT$ are compatible with the braided monoidal structure as in \cite{EGNO}*{Definition 8.1.7}, and $\tau$ is a morphism of monoidal functors, then Eq. (\ref{augmenteddiag3}) holds by assumption. Setting
\begin{align}
\sigma_{X,V}&:=\Psi^{\cM}_{X,\rT(V)}, &X\in \cM, V\in \cB,
\end{align}
then Eqs. (\ref{augmenteddiag1})--(\ref{augmenteddiag2}) hold by the hexagonal diagrams (cf. e.g. \cite{EGNO}*{8.1.1}), combined with $\rT$ being a functor of braided monoidal categories. The condition of Eq. (\ref{augmentedbraiding}) follows from the assumption that $\rF$ is a functor of braided monoidal categories.
\end{proof}

\begin{example}\label{bialgebraexpl}
Our main source of examples of $\cB$-augmented monoidal categories in $\Cat^c$ will be of the form $\lmod{B}(\cB)$, where $B$ is a bialgebra object in $\cB$ (cf. Section \ref{setup}). The category $\lmod{B}(\cB)$ is monoidal. The tensor product of two objects $(V,a_V)$, $(W,a_W)$, where $a_V\colon B\otimes V\to V$ and $a_W\colon B\otimes W\to W$ are the action morphisms,  is $\left(V\otimes W, a_{V\otimes W}\right)$ with
\begin{align}
a_{V\otimes W}:=(a_V\otimes a_W)(\ide_B\otimes \Psi_{B,V}\otimes \ide_V)(\Delta\otimes \ide_{V\otimes W}).\end{align}

Graphical calculus, as in \cite{Lau2}, which is inspired by that used in \cite{Maj6}, is helpful for computations in $\lmod{B}(\cB)$. For example, the tensor product of modules is depictured as
\begin{equation}a_{V\otimes W}=\vcenter{\hbox{
\begingroup%
  \makeatletter%
  \providecommand\color[2][]{%
    \errmessage{(Inkscape) Color is used for the text in Inkscape, but the package 'color.sty' is not loaded}%
    \renewcommand\color[2][]{}%
  }%
  \providecommand\transparent[1]{%
    \errmessage{(Inkscape) Transparency is used (non-zero) for the text in Inkscape, but the package 'transparent.sty' is not loaded}%
    \renewcommand\transparent[1]{}%
  }%
  \providecommand\rotatebox[2]{#2}%
  \ifx\svgwidth\undefined%
    \setlength{\unitlength}{30.44988496bp}%
    \ifx\svgscale\undefined%
      \relax%
    \else%
      \setlength{\unitlength}{\unitlength * \real{\svgscale}}%
    \fi%
  \else%
    \setlength{\unitlength}{\svgwidth}%
  \fi%
  \global\let\svgwidth\undefined%
  \global\let\svgscale\undefined%
  \makeatother%
  \begin{picture}(1,0.88991524)%
    \put(0,0){\includegraphics[width=\unitlength,page=1]{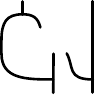}}%
    \put(0.5005321,0.47717664){\color[rgb]{0,0,0}\makebox(0,0)[lb]{\smash{$~$}}}%
    \put(0,0){\includegraphics[width=\unitlength,page=2]{tensoraction.pdf}}%
  \end{picture}%
\endgroup%
}}.\end{equation}
Here, $\Psi=\vcenter{\hbox{
\begingroup%
  \makeatletter%
  \providecommand\color[2][]{%
    \errmessage{(Inkscape) Color is used for the text in Inkscape, but the package 'color.sty' is not loaded}%
    \renewcommand\color[2][]{}%
  }%
  \providecommand\transparent[1]{%
    \errmessage{(Inkscape) Transparency is used (non-zero) for the text in Inkscape, but the package 'transparent.sty' is not loaded}%
    \renewcommand\transparent[1]{}%
  }%
  \providecommand\rotatebox[2]{#2}%
  \ifx\svgwidth\undefined%
    \setlength{\unitlength}{8.22109334bp}%
    \ifx\svgscale\undefined%
      \relax%
    \else%
      \setlength{\unitlength}{\unitlength * \real{\svgscale}}%
    \fi%
  \else%
    \setlength{\unitlength}{\svgwidth}%
  \fi%
  \global\let\svgwidth\undefined%
  \global\let\svgscale\undefined%
  \makeatother%
  \begin{picture}(1,1.0011064)%
    \put(0,0){\includegraphics[width=\unitlength,page=1]{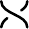}}%
  \end{picture}%
\endgroup%
}}$ denotes the braiding in $\cB$, $\Delta=\vcenter{\hbox{
\begingroup%
  \makeatletter%
  \providecommand\color[2][]{%
    \errmessage{(Inkscape) Color is used for the text in Inkscape, but the package 'color.sty' is not loaded}%
    \renewcommand\color[2][]{}%
  }%
  \providecommand\transparent[1]{%
    \errmessage{(Inkscape) Transparency is used (non-zero) for the text in Inkscape, but the package 'transparent.sty' is not loaded}%
    \renewcommand\transparent[1]{}%
  }%
  \providecommand\rotatebox[2]{#2}%
  \ifx\svgwidth\undefined%
    \setlength{\unitlength}{16.80101968bp}%
    \ifx\svgscale\undefined%
      \relax%
    \else%
      \setlength{\unitlength}{\unitlength * \real{\svgscale}}%
    \fi%
  \else%
    \setlength{\unitlength}{\svgwidth}%
  \fi%
  \global\let\svgwidth\undefined%
  \global\let\svgscale\undefined%
  \makeatother%
  \begin{picture}(1,0.76387957)%
    \put(0,0){\includegraphics[width=\unitlength]{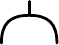}}%
  \end{picture}%
\endgroup%
}}$ denotes the coproduct of $B$, and $\triangleright=\vcenter{\hbox{
\begingroup%
  \makeatletter%
  \providecommand\color[2][]{%
    \errmessage{(Inkscape) Color is used for the text in Inkscape, but the package 'color.sty' is not loaded}%
    \renewcommand\color[2][]{}%
  }%
  \providecommand\transparent[1]{%
    \errmessage{(Inkscape) Transparency is used (non-zero) for the text in Inkscape, but the package 'transparent.sty' is not loaded}%
    \renewcommand\transparent[1]{}%
  }%
  \providecommand\rotatebox[2]{#2}%
  \ifx\svgwidth\undefined%
    \setlength{\unitlength}{9.12051093bp}%
    \ifx\svgscale\undefined%
      \relax%
    \else%
      \setlength{\unitlength}{\unitlength * \real{\svgscale}}%
    \fi%
  \else%
    \setlength{\unitlength}{\svgwidth}%
  \fi%
  \global\let\svgwidth\undefined%
  \global\let\svgscale\undefined%
  \makeatother%
  \begin{picture}(1,1.40715315)%
    \put(0,0){\includegraphics[width=\unitlength]{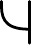}}%
    \put(0.13167899,0.48417802){\color[rgb]{0,0,0}\makebox(0,0)[lb]{\smash{$~$
}}}%
  \end{picture}%
\endgroup%
}}\colon B\otimes V\to V$ denotes a left coaction of $B$ on $V$.

The monoidal category $\lmod{B}(\cB)$ is $\cB$-augmented, using
the forgetful functor $\rF$ mapping a left $B$-module to the underlying object in $\cB$, and defining $\rT(V)$ to be the trivial $B$-module $V^{\op{triv}}$, using the trivial action $a^{\op{triv}}=\varepsilon\otimes \ide_V$ via the counit $\varepsilon$. Then $\sigma_{X,V}:=\rT(\Psi_{\rF(X),V})$ is a natural isomorphism as required, for $X$ in $\lmod{B}(\cB)$ and $V$ in $\cB$. For this, we check that $\rT(\Psi_{\rF(X),V})$ in fact is a morphism $X\otimes V^{\op{triv}}\to V^{\op{triv}}\otimes X$ of left $B$-modules in $\cB$. Indeed, this follows from
\begin{equation}\vcenter{\hbox{
\begingroup%
  \makeatletter%
  \providecommand\color[2][]{%
    \errmessage{(Inkscape) Color is used for the text in Inkscape, but the package 'color.sty' is not loaded}%
    \renewcommand\color[2][]{}%
  }%
  \providecommand\transparent[1]{%
    \errmessage{(Inkscape) Transparency is used (non-zero) for the text in Inkscape, but the package 'transparent.sty' is not loaded}%
    \renewcommand\transparent[1]{}%
  }%
  \providecommand\rotatebox[2]{#2}%
  \newcommand*\fsize{\dimexpr\f@size pt\relax}%
  \newcommand*\lineheight[1]{\fontsize{\fsize}{#1\fsize}\selectfont}%
  \ifx\svgwidth\undefined%
    \setlength{\unitlength}{173.1964275bp}%
    \ifx\svgscale\undefined%
      \relax%
    \else%
      \setlength{\unitlength}{\unitlength * \real{\svgscale}}%
    \fi%
  \else%
    \setlength{\unitlength}{\svgwidth}%
  \fi%
  \global\let\svgwidth\undefined%
  \global\let\svgscale\undefined%
  \makeatother%
  \begin{picture}(1,0.19995375)%
    \lineheight{1}%
    \setlength\tabcolsep{0pt}%
    \put(0,0){\includegraphics[width=\unitlength,page=1]{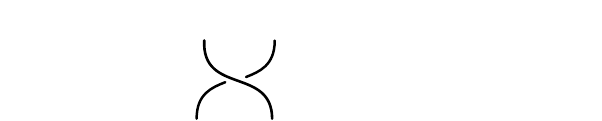}}%
    \put(0.47772942,0.08941879){\color[rgb]{0,0,0}\makebox(0,0)[lt]{\lineheight{0}\smash{\begin{tabular}[t]{l}$=$\end{tabular}}}}%
    \put(0,0){\includegraphics[width=\unitlength,page=2]{modulebalancing.pdf}}%
    \put(0.24002413,0.08856853){\color[rgb]{0,0,0}\makebox(0,0)[lt]{\lineheight{0}\smash{\begin{tabular}[t]{l}$=$\end{tabular}}}}%
    \put(0,0){\includegraphics[width=\unitlength,page=3]{modulebalancing.pdf}}%
    \put(0.71589831,0.08988238){\color[rgb]{0,0,0}\makebox(0,0)[lt]{\lineheight{0}\smash{\begin{tabular}[t]{l}$=$\end{tabular}}}}%
  \end{picture}%
\endgroup%
}}.\end{equation}
Here, the isomorphism $\tau\colon \rF\rT\natisomorph \ide$ consists of identity morphisms.

A morphism of bialgebras $B\to C$ in $\cB$ induces a functor of $\cB$-augmented monoidal categories $\lmod{C}(\cB)\to \lmod{B}(\cB)$, where the natural isomorphisms $\theta$ and $\phi$ both consist of identities.

Note the subtlety that the category of \emph{right} $B$-modules $\rmod{B}(\cB)$ is $\overline{\cB}$-augmented, rather than $\cB$-augmented.
\end{example}

\begin{example}\label{comoduleaugmented}
Dually, we can define $\lcomod{B}(\cB)$ using left comodules instead of module. This category is $\overline{\cB}$-augmented, while right $B$-comodules $\rcomod{B}(\cB)$ yield a $\cB$-augmented monoidal category.
\end{example}

In the case of $\cM=\lmod{H}(\cB)$ for a Hopf algebra in $\cB$ we will require the following equivalence for the $\overline{\cB}$-augmented monoidal category with opposite tensor product.

\begin{lemma}\label{oppositeaugmented}
Let $H$ be a Hopf algebra object in $\cB$ with invertible antipode.
There is an equivalence of $\overline{\cB}$-augmented monoidal categories $\left(\lmod{H}(\cB)\right)^{\oop}\simeq \rmod{H}(\cB)$.
\end{lemma}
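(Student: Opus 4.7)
The plan is to construct explicit quasi-inverse $\overline{\cB}$-augmented monoidal functors using the antipode $S$ and its inverse. First, I would define a functor
$$\Phi\colon \bigl(\lmod{H}(\cB)\bigr)^{\oop}\longrightarrow \rmod{H}(\cB)$$
which is the identity on underlying objects and morphisms of $\cB$, and which sends a left $H$-module $(V,a_V)$ to the right $H$-module $(V,a_V^R)$ with
$$a_V^R := a_V\circ (S\otimes \ide_V)\circ \Psi_{V,H}\colon V\otimes H\longrightarrow V.$$
The right-module axiom $a_V^R(a_V^R\otimes \ide_H)=a_V^R(\ide_V\otimes m_H)$ is then verified by a graphical computation using the braided antialgebra property $S\circ m_H = m_H\circ (S\otimes S)\circ \Psi_{H,H}$, which is a standard consequence of the Hopf algebra axioms in $\cB$ (cf.\ \cite{Maj1}*{Fig.\ 9.14}), together with naturality of $\Psi$.

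Second, I would equip $\Phi$ with the monoidal coherence isomorphism
$$\mu^{\Phi}_{V,W}\colon \Phi(V)\otimes^{\rmod{H}} \Phi(W) \stackrel{\Psi^{-1}_{V,W}}{\longrightarrow} W\otimes V = \Phi(V\otimes^{\oop}W),$$
where the tensor product on the left is that of right $H$-modules (involving $\Psi^{-1}$, since $\rmod{H}(\cB)$ is $\overline{\cB}$-augmented by Example \ref{bialgebraexpl}), while $V\otimes^{\oop}W=W\otimes V$ carries the left $H$-action from $\lmod{H}(\cB)$. Checking that $\mu^{\Phi}_{V,W}$ is a morphism of right $H$-modules amounts to showing a compatibility between the two tensor products of module actions; this reduces, via naturality of the braiding and the hexagon axioms, to the braided anticoalgebra property $\Delta\circ S = \Psi_{H,H}\circ (S\otimes S)\circ \Delta$, which again follows from the Hopf algebra structure of $H$. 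The associativity pentagon for $\mu^{\Phi}$ then follows from naturality plus the braided bialgebra axiom (\ref{bialgebraaxiom}).

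Third, I would construct a candidate inverse $\Psi\colon \rmod{H}(\cB)\to (\lmod{H}(\cB))^{\oop}$ analogously, now converting a right action $a_V^R$ to the left action
$$a_V^L := a_V^R\circ \Psi^{-1}_{H,V}\circ (\ide_V\otimes S^{-1}),$$
with monoidal coherence furnished by $\Psi_{V,W}$. The relations $S\circ S^{-1}=\ide=S^{-1}\circ S$ together with naturality of the braiding then yield natural monoidal isomorphisms $\Psi\Phi\natisomorph \ide$ and $\Phi\Psi\natisomorph \ide$, both of which may be taken to be the identity on underlying objects of $\cB$.

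Finally, I would verify compatibility with the $\overline{\cB}$-augmentation data. The forgetful functors $\rF$ on both sides send $(V,a_V)\mapsto V$, so one can take $\phi^{\Phi}=\ide$; since $\varepsilon\circ S=\varepsilon$, the trivial-module assignments $\rT$ are likewise intertwined by $\Phi$ and one can take $\theta^{\Phi}=\ide$. The compatibility diagram (\ref{augmentedfunctor1}) then comes down to observing that the braiding isomorphism $\sigma$ on each side is, via the forgetful functor, just the braiding of $\cB$ (respectively its inverse), and the two conventions match precisely because $(\lmod{H}(\cB))^{\oop}$ uses $\sigma^{-1}$ (as in the example preceding Example \ref{braidedaugmented}) while $\rmod{H}(\cB)$ uses $\Psi^{-1}$. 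The main technical obstacle will be the verification in the second step that $\mu^{\Phi}$ is right $H$-linear; once that graphical identity is in hand, all remaining checks are routine naturality and coherence diagrams in $\cB$.
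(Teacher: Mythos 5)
Your proposal is correct and takes essentially the same route as the paper: the same antipode-twisted right action (your $a_V^R=a_V(S\otimes\ide_V)\Psi_{V,H}$ equals the paper's $\triangleright\Psi_{V,H}(\ide_V\otimes S)$ by naturality of $\Psi$), the inverse braiding as the monoidal coherence isomorphism, the $S^{-1}$-twisted quasi-inverse, and identity $\theta,\phi$ for the augmentation data. Only watch the subscripts in your formula for $a_V^L$ --- as written, $\Psi^{-1}_{H,V}$ and $(\ide_V\otimes S^{-1})$ do not compose with $a_V^R$; the intended map is $a_V^R\,\Psi^{-1}_{V,H}(S^{-1}\otimes\ide_V)$ as in the paper.
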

\begin{proof}
Consider the equivalence of categories given by the functors
\begin{align*}
\Phi&\colon \lmod{H}(\cB)\longrightarrow \rmod{H}(\cB),&
(V,\triangleright )\longmapsto (V, \triangleleft'), &&\triangleleft'=\triangleright\Psi_{V,H}(\ide_V\otimes S)\\
\Phi'&\colon \rmod{H}(\cB)\longrightarrow \lmod{H}(\cB),&
(V,\triangleleft )\longmapsto (V, \triangleright'), &&\triangleright'=\triangleleft\Psi^{-1}_{V,H}(S^{-1}\otimes \ide_V).
\end{align*}
Using graphical calculus as above, the associated right and left actions are
\begin{align}\triangleleft'=\vcenter{\hbox{
\begingroup%
  \makeatletter%
  \providecommand\color[2][]{%
    \errmessage{(Inkscape) Color is used for the text in Inkscape, but the package 'color.sty' is not loaded}%
    \renewcommand\color[2][]{}%
  }%
  \providecommand\transparent[1]{%
    \errmessage{(Inkscape) Transparency is used (non-zero) for the text in Inkscape, but the package 'transparent.sty' is not loaded}%
    \renewcommand\transparent[1]{}%
  }%
  \providecommand\rotatebox[2]{#2}%
  \ifx\svgwidth\undefined%
    \setlength{\unitlength}{30.18503552bp}%
    \ifx\svgscale\undefined%
      \relax%
    \else%
      \setlength{\unitlength}{\unitlength * \real{\svgscale}}%
    \fi%
  \else%
    \setlength{\unitlength}{\svgwidth}%
  \fi%
  \global\let\svgwidth\undefined%
  \global\let\svgscale\undefined%
  \makeatother%
  \begin{picture}(1,1.21952462)%
    \put(0,0){\includegraphics[width=\unitlength,page=1]{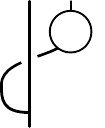}}%
    \put(0.54661962,0.78503177){\color[rgb]{0,0,0}\makebox(0,0)[lb]{\smash{$S$}}}%
  \end{picture}%
\endgroup%
}}, &&\triangleright'=\vcenter{\hbox{
\begingroup%
  \makeatletter%
  \providecommand\color[2][]{%
    \errmessage{(Inkscape) Color is used for the text in Inkscape, but the package 'color.sty' is not loaded}%
    \renewcommand\color[2][]{}%
  }%
  \providecommand\transparent[1]{%
    \errmessage{(Inkscape) Transparency is used (non-zero) for the text in Inkscape, but the package 'transparent.sty' is not loaded}%
    \renewcommand\transparent[1]{}%
  }%
  \providecommand\rotatebox[2]{#2}%
  \ifx\svgwidth\undefined%
    \setlength{\unitlength}{54.46342773bp}%
    \ifx\svgscale\undefined%
      \relax%
    \else%
      \setlength{\unitlength}{\unitlength * \real{\svgscale}}%
    \fi%
  \else%
    \setlength{\unitlength}{\svgwidth}%
  \fi%
  \global\let\svgwidth\undefined%
  \global\let\svgscale\undefined%
  \makeatother%
  \begin{picture}(1,0.95663033)%
    \put(0,0){\includegraphics[width=\unitlength]{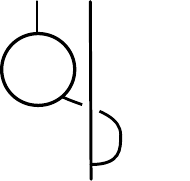}}%
    \put(0.07198333,0.4776608){\color[rgb]{0,0,0}\makebox(0,0)[lb]{\smash{$S^{\text{-1}}$}}}%
    \put(0.75953169,0.32175952){\color[rgb]{0,0,0}\makebox(0,0)[lb]{\smash{$.$}}}%
  \end{picture}%
\endgroup%
}}\end{align}
It is an exercise to check that the functors $\Phi$, $\Phi'$ give an equivalence of categories. 
We have to verify that $\Phi$ is a functor of monoidal categories $\left(\lmod{H}(\cB)\right)^{\oop}\to \rmod{H}(\cB)$. Indeed, we can use the isomorphism 
$\mu^{\Phi}_{V,W}:=\Psi_{W,V}^{-1}\colon \Phi(W\otimes V)\to \Phi(V)\otimes \Psi(W)$. That is, we need to verify the following functional equation
\begin{align*}
\left(\triangleleft_{V\otimes W}\right)'(\Psi_{W,V}\otimes H)=\vcenter{\hbox{
\begingroup%
  \makeatletter%
  \providecommand\color[2][]{%
    \errmessage{(Inkscape) Color is used for the text in Inkscape, but the package 'color.sty' is not loaded}%
    \renewcommand\color[2][]{}%
  }%
  \providecommand\transparent[1]{%
    \errmessage{(Inkscape) Transparency is used (non-zero) for the text in Inkscape, but the package 'transparent.sty' is not loaded}%
    \renewcommand\transparent[1]{}%
  }%
  \providecommand\rotatebox[2]{#2}%
  \ifx\svgwidth\undefined%
    \setlength{\unitlength}{146.33064181bp}%
    \ifx\svgscale\undefined%
      \relax%
    \else%
      \setlength{\unitlength}{\unitlength * \real{\svgscale}}%
    \fi%
  \else%
    \setlength{\unitlength}{\svgwidth}%
  \fi%
  \global\let\svgwidth\undefined%
  \global\let\svgscale\undefined%
  \makeatother%
  \begin{picture}(1,0.40576627)%
    \put(0,0){\includegraphics[width=\unitlength,page=1]{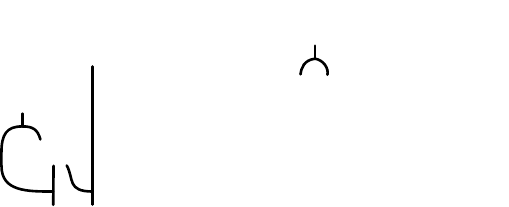}}%
    \put(0.10415519,0.09929557){\color[rgb]{0,0,0}\makebox(0,0)[lb]{\smash{$~$}}}%
    \put(0,0){\includegraphics[width=\unitlength,page=2]{leftrightproof.pdf}}%
    \put(0.27075347,0.2993768){\color[rgb]{0,0,0}\makebox(0,0)[lb]{\smash{$S$}}}%
    \put(0,0){\includegraphics[width=\unitlength,page=3]{leftrightproof.pdf}}%
    \put(0.35207819,0.18394972){\color[rgb]{0,0,0}\makebox(0,0)[lb]{\smash{$=$}}}%
    \put(0,0){\includegraphics[width=\unitlength,page=4]{leftrightproof.pdf}}%
    \put(0.59331073,0.32192828){\color[rgb]{0,0,0}\makebox(0,0)[lb]{\smash{$S$}}}%
    \put(0,0){\includegraphics[width=\unitlength,page=5]{leftrightproof.pdf}}%
    \put(0.681818,0.18350895){\color[rgb]{0,0,0}\makebox(0,0)[lb]{\smash{$=$}}}%
    \put(0,0){\includegraphics[width=\unitlength,page=6]{leftrightproof.pdf}}%
  \end{picture}%
\endgroup%
}}~=\Psi_{W,V}\triangleleft'_{W\otimes V}.
\end{align*}
Thus, we obtain an equivalence of monoidal categories. It is easily checked that $\tau$, $\sigma$ are compatible with the monoidal functors. The natural isomorphisms $\phi$, $\theta$ are all defined as identities in the underlying category $\cB$. 
\end{proof}

\begin{lemma}\label{augmentedtowers}
Let $\cM$ be a $\cB$-augmented monoidal category which is braided s.t. the structural functors $\rF_{\cM}, \rT_{\cM}$ are compatible with the braiding, and $\cN$ a $\cM$-augmented monoidal category. Then $\cN$ is also a $\cB$-augmented monoidal category.
\end{lemma}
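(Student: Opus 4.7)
The plan is to build the $\cB$-augmentation on $\cN$ by vertically stacking the two given augmentations. Set $\rF := \rF_{\cM}\rF_{\cN}\colon \cN\to\cB$ and $\rT := \rT_{\cN}\rT_{\cM}\colon\cB\to\cN$, equipped with the canonical monoidal structures of composite monoidal functors, namely
\[
\mu^{\rF}_{Y,Z} = \rF_{\cM}(\mu^{\rF_{\cN}}_{Y,Z})\,\mu^{\rF_{\cM}}_{\rF_{\cN}Y,\rF_{\cN}Z}, \qquad \mu^{\rT}_{V,W} = \rT_{\cN}(\mu^{\rT_{\cM}}_{V,W})\,\mu^{\rT_{\cN}}_{\rT_{\cM}V,\rT_{\cM}W}.
\]
Define $\tau\colon\rF\rT\natisomorph\ide_{\cB}$ by horizontal composition, $\tau_V := \tau^{\cM}_{V}\,\rF_{\cM}(\tau^{\cN}_{\rT_{\cM}V})$, and define $\sigma$ by restricting $\sigma^{\cN}$ along $\rT_{\cM}$, i.e.\ $\sigma_{X,V} := \sigma^{\cN}_{X,\rT_{\cM}V}$ for $X\in\cN$ and $V\in\cB$.

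The substantive step is the descent condition (\ref{sigmadesent2}). Starting from $\rF(\sigma_{X,V}) = \rF_{\cM}\rF_{\cN}(\sigma^{\cN}_{X,\rT_{\cM}V})$, expand using (\ref{sigmadesent2}) for the $\cM$-augmentation of $\cN$ to obtain an expression involving $\rF_{\cM}(\Psi^{\cM}_{\rF_{\cN}X,\,\rF_{\cN}\rT_{\cN}\rT_{\cM}V})$ sandwiched between $\rF_{\cM}$-images of the $\mu^{\rF_{\cN}}$-constraints. The hypothesis that $\rF_{\cM}$ is a braided monoidal functor then rewrites $\rF_{\cM}(\Psi^{\cM}_{-,-})$ as $\Psi^{\cB}_{\rF X,\rF\rT V}$ conjugated by $\mu^{\rF_{\cM}}$-constraints. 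The chain rule for $\mu^{\rF_{\cM}\rF_{\cN}}$ displayed above then collects all constraints into $\mu^{\rF}$, producing exactly the right-hand side of (\ref{sigmadesent2}) with $\Psi = \Psi^{\cB}$. This is the only place where the braided-compatibility of $\rF_{\cM}$ is essential, and it is the main point of the lemma.

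The remaining axioms reduce to the analogous ones for the $\cM$-augmentation of $\cN$. Diagram (\ref{augmenteddiag1}) is immediate by specializing its $\cM$-augmented version at $M = \rT_{\cM}V$. Diagram (\ref{augmenteddiag2}) follows by combining the $\cM$-augmented version of (\ref{augmenteddiag2}) at $(M,N) = (\rT_{\cM}V,\rT_{\cM}W)$ with naturality of $\sigma^{\cN}$ applied to $\mu^{\rT_{\cM}}_{V,W}\colon \rT_{\cM}V\otimes \rT_{\cM}W \to \rT_{\cM}(V\otimes W)$, together with the chain rule for $\mu^{\rT_{\cN}\rT_{\cM}}$ above. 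Diagram (\ref{augmenteddiag3}) asserts that $\tau$ is a morphism of monoidal functors, which is preserved under horizontal composition; since both $\tau^{\cM}$ and $\tau^{\cN}$ are morphisms of monoidal functors, so is their horizontal composite $\tau$. The strict unit conditions $\sigma_{X,\one} = \ide_{X}$, $\sigma_{\one,V} = \ide_{\rT V}$, and $\tau_{\one} = \ide_{\one}$ descend from the corresponding conditions on $\sigma^{\cN}$, $\tau^{\cN}$, $\tau^{\cM}$ together with strict unitality of $\rT_{\cM}$.

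The main obstacle is the descent condition (\ref{sigmadesent2}); this is where the braided-compatibility of $\rF_{\cM}$ is used decisively, while the remaining compatibilities are bookkeeping across the two layers of monoidal data.
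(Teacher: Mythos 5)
Your construction coincides with the paper's: compose the two adjunctions to get $\rF=\rF_{\cM}\rF_{\cN}$, $\rT=\rT_{\cN}\rT_{\cM}$, take $\tau$ to be the composite counit, and set $\sigma_{X,V}:=\sigma^{\cN}_{X,\rT_{\cM}V}$, with the coherence diagrams obtained by restriction. Your proposal is correct and in fact spells out the descent condition (\ref{sigmadesent2}) — the one place the braided-compatibility of $\rF_{\cM}$ is needed — more explicitly than the paper does.
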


\begin{theorem}\label{relativeproductproperties}
If $\cM$, $\cN$ are $\cB$-augmented (rigid) monoidal categories, then the $\cB$-balanced tensor product $\cM\boxtimes_{\cB}\cN$ is a  (rigid) monoidal category.
\end{theorem}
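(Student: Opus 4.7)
The plan is to construct the monoidal structure on $\cD:=\cM\boxtimes_{\cB}\cN$ by invoking the universal property of the relative tensor product to descend a natural candidate defined on pure tensors. Concretely, the target tensor functor $\otimes^{\cD}\colon\cD\boxtimes\cD\to\cD$ should satisfy $(X_1\otimes Y_1)\otimes(X_2\otimes Y_2)=(X_1\otimes^{\cM}X_2)\otimes(Y_1\otimes^{\cN}Y_2)$. Applying the universal property of $\boxtimes_{\cB}$ in both factors of $\cD\boxtimes\cD$, the task reduces to exhibiting a functor $\rG\colon\cM\boxtimes\cN\boxtimes\cM\boxtimes\cN\to\cD$ sending $(X_1,Y_1,X_2,Y_2)$ to $(X_1\otimes X_2)\otimes(Y_1\otimes Y_2)$ and carrying $\cB$-balancing isomorphisms on each of the pairs of slots $(1,2)$ and $(3,4)$.

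The balancing for slots $(1,2)$ must compare $\rG(X_1\otimes\rT V,Y_1,X_2,Y_2)$ with $\rG(X_1,\rT V\otimes Y_1,X_2,Y_2)$; the natural candidate is the composite of $\ide_{X_1\otimes X_2}\otimes\sigma_{X_2,V}\otimes\ide_{Y_1\otimes Y_2}$ (using $\sigma$ to shuttle $\rT V$ past $X_2$ in $\cM$) with the tautological $\cB$-balancing isomorphism of $\cD$ in the middle position. Coherence of this balancing, i.e., the diagram (\ref{balancedcoh}), reduces directly to Eqs.~(\ref{augmenteddiag1})--(\ref{augmenteddiag2}), which express exactly the compatibility of $\sigma$ with $\otimes^{\cM}$ and $\otimes^{\cB}$. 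The balancing for slots $(3,4)$ is the mirror construction using $\sigma^{\cN}$; that the two balancings commute (so $\rG$ is balanced in \emph{both} pairs simultaneously) is immediate since they operate in disjoint tensorands of $\rG$. The unit of $\cD$ is $\one_{\cM}\otimes\one_{\cN}$, with the unit isomorphisms inherited from those of $\cM$ and $\cN$ together with $\sigma_{X,\one}=\ide$, $\sigma_{\one,V}=\ide$, $\tau_{\one}=\ide$.

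Associativity and the pentagon then follow from the associators of $\cM$ and $\cN$: on pure tensors the associator is $\alpha^{\cM}\otimes\alpha^{\cN}$, and this assembles into a natural isomorphism on $\cD$ by a further application of the universal property (the three-fold relative tensor product $\cD\boxtimes\cD\boxtimes\cD$ being obtained analogously via a six-slot balanced functor). The pentagon in $\cD$ reduces to the pentagons of $\cM$ and $\cN$ plus naturality of $\sigma$. For rigidity, given $X\otimes Y$ in $\cD$ with $X,Y$ having left duals in $\cM,\cN$ respectively, I would set $^*(X\otimes Y):={^*X}\otimes{^*Y}$; by the monoidal structure just constructed, $({^*X}\otimes{^*Y})\otimes(X\otimes Y)=({^*X}\otimes X)\otimes({^*Y}\otimes Y)$, so evaluation is $\ev^{\cM}\otimes\ev^{\cN}\colon({^*X}\otimes X)\otimes({^*Y}\otimes Y)\to\one\otimes\one=\one$, with coevaluation defined dually. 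The rigidity axioms (the snake identities) then reduce to those in $\cM$ and $\cN$ separately. Right duals $(X\otimes Y)^*:=X^*\otimes Y^*$ are treated symmetrically.

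The main obstacle is the bookkeeping of the two independent $\cB$-balancings on $\rG$ and verifying that their interaction with the would-be associator is coherent after descending through the universal property. This is where Eqs.~(\ref{augmenteddiag1})--(\ref{augmenteddiag2}) and Eq.~(\ref{augmentedbraiding}) become essential: they are precisely the axioms of a $\cB$-augmented structure that ensure the two balancings used in $\rG$ can be simultaneously compatible with the associators inherited from $\cM$ and $\cN$. Once this is verified, everything else is routine propagation of the monoidal and rigid structures through the universal property.
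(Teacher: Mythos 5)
Your proposal is correct and follows essentially the same route as the paper: descend the componentwise tensor product through the universal property via a functor on $\cM\boxtimes\cN\boxtimes\cM\boxtimes\cN$ that is $\cB$-balanced in each of the two slot-pairs, with the balancings assembled from the tautological balancing of $\cM\boxtimes_{\cB}\cN$ composed with $\sigma^{-1}$ in $\cM$ (resp.\ $\cN$), coherence reducing to Eqs.~(\ref{balancedcoh}) and (\ref{augmenteddiag2}), and rigidity inherited from componentwise duals via the monoidal factorization functor. The only cosmetic differences are that the paper performs the two balancings sequentially (currying through an endofunctor $\rR_{X\otimes Y}$) and obtains coherence of the associator from uniqueness of factorizations rather than verifying the pentagon directly.
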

\begin{proof}
Note that $(\Cat^c, \boxtimes)$ is a monoidal category. In the proof, we work with different bracketing of iterated Kelly tensor products $\boxtimes$. For these, we chose a (coherent) set of natural isomorphisms between different ways of bracketing. We will however omit these from the notation for reasons of simplification of the exposition. 


Note that there exist a swap functor $\tau\colon \cM\boxtimes \cN\to \cN\boxtimes \cM$, and we observe that the composite 
$$\boxtimes_{\cB}(\otimes^{\cM}\boxtimes \otimes^{\cN})(\ide_{\cM}\boxtimes \tau \boxtimes \ide_{\cN})\colon \cM \boxtimes \cN\boxtimes \cM \boxtimes \cN\longrightarrow \cM \boxtimes_{\cB} \cN$$ factors (up to natural isomorphism) through a functor
$$\otimes^{\cM\boxtimes_{\cB}\cN}\colon(\cM\boxtimes_{\cB}\cN)\boxtimes (\cM\boxtimes_{\cB}\cN)\longrightarrow \cM\boxtimes_{\cB}\cN.$$

To demonstrate this, denote the balancing of the functor $\boxtimes_{\cC}\colon \cM\boxtimes \cN\to\cM\boxtimes_{\cB} \cN $ by $\eta$. 
Fix a pair of objects $M\otimes N\in \cM\boxtimes \cN$. Then the functor 
\begin{align*}
\cM\boxtimes\cN&\longrightarrow\cM\boxtimes_{\cB}\cN,&
X\otimes Y&\longmapsto (M\otimes^{\cM}X)\otimes(N\otimes^{\cN}Y)
\end{align*}
is $\cB$-balanced via the balancing isomorphism
$$\eta^{X\otimes Y}_{M,B,N}:=(\ide_{M\otimes X}\otimes \sigma_{N,B}^{-1}\otimes \ide_Y)\eta_{M\otimes X,B,N\otimes Y}.$$
This $\cB$-balancing isomorphism clearly satisfies  Eq. (\ref{balancedcohone}), and Eq. (\ref{balancedcoh}) follows from commutativity of the outer diagram in
\begin{align*}\resizebox{\hsize}{!}{
\xymatrix@C=0pt{
(M \otimes X\otimes \rT(B\otimes C))\otimes (N\otimes Y)\ar[rd]^{\eta_{M\otimes X,B\otimes C,N\otimes Y}}\ar[rr]^{\eta^{X\otimes Y}_{M,B\otimes C,N}}&& (M\otimes X)\otimes (N\otimes \rT(B\otimes C)\otimes Y)\\
&(M\otimes X)\otimes (\rT(B\otimes C)\otimes N\otimes Y)\ar[ru]^{\ide\otimes \sigma^{-1}_{N,B\otimes C}\otimes \ide}&
\\
(M\otimes X\otimes \rT B \otimes \rT C )\otimes (N\otimes Y)\ar[uu]^{\ide\otimes \mu^T_{B,C}\otimes \ide} \ar[dd]_{\eta_{M\otimes X\otimes \rT B,C,N\otimes Y}} &&(M\otimes X)\otimes (N\otimes\rT B\otimes \rT C\otimes Y)\ar[uu]^{\ide\otimes \mu^T_{B,C}\otimes \ide}\\
&(M\otimes X)\otimes (\rT B\otimes \rT C\otimes N\otimes Y)\ar[uu]^{\ide\otimes \mu^{\rT}_{B,C}\otimes \ide}\ar[dr]^{\ide\otimes \sigma^{-1}_{N,C}\otimes\ide}&\\
(M\otimes X\otimes \rT B)\otimes (\rT C \otimes N\otimes Y)\ar[ur]^{\eta_{M\otimes X,B,\rT C\otimes N\otimes Y}}\ar[rd]^{\ide\otimes \sigma^{-1}_{N,C}\otimes \ide}&&(M\otimes X)\otimes (\rT B\otimes N\otimes \rT C\otimes Y)\ar[uu]^{\ide\otimes \sigma^{-1}_{N,B}\otimes \ide}.\\
&(M\otimes X\otimes \rT B)\otimes (N\otimes \rT C\otimes Y)\ar[ur]^{\eta_{M\otimes X,B,N\otimes \rT C\otimes Y}}&
}}
\end{align*}
The inner diagrams commute using Eqs. (\ref{balancedcoh}), (\ref{augmenteddiag2}), naturality of $\eta$ applied to $\sigma^{-1}_{N,V}$ as well as the definition of $\eta^{X\otimes Y}$.
Hence, we obtain a factorization through $\cM\boxtimes_{\cB}\cN$, which is the endo-functor of $\cM\boxtimes_{\cB}\cN$ denoted by $\rR_{X\otimes Y}$. By completion under biproducts this gives a functor
\begin{align*}\cM\boxtimes \cN\to \Ends (\cM\boxtimes_{\cB}\cN),&&X\otimes Y\longmapsto \rR_{X\otimes Y}.\end{align*}
Using the symmetric closed structure from \cite{Kel}*{Section 6.5} we obtain a functor  $$\rR\colon \cM\boxtimes \cN\boxtimes(\cM\boxtimes_{\cB} \cN)\longrightarrow \cM\boxtimes_{\cB} \cN.$$

We claim that the functor $\rR$ is one of balanced bimodules (in the rightmost $\boxtimes$-product). Indeed, we may use the composite natural isomorphism 
\begin{gather*}
\eta^{\rR}_{M\otimes N,B,X\otimes Y}\colon (M\otimes^{\cM} \rT B\otimes^{\cM}X)\otimes (N\otimes^{\cN} Y) \longrightarrow (M\otimes^{\cM}X)\otimes (\rT B \otimes^{\cN}N\otimes^{\cN} Y),\\
\eta^{\rR}_{M\otimes N,B,X\otimes Y}=\eta_{M\otimes X,B,N\otimes Y}(\ide_{M}\otimes \sigma^{-1}_{X,B}\otimes \ide_{N\otimes Y}).
\end{gather*}
It follows very similarly to before that $\eta^{\rR}$ satisfies Eqs. (\ref{balancedcoh}) and (\ref{balancedcohone}). Hence we obtain a factorization through $(\cM\boxtimes_{\cB} \cN)\boxtimes(\cM\boxtimes_{\cB} \cN)$, denoted by
$\otimes^{\cM\boxtimes_{\cB}\cN}$, as stated. As such, it is unique up to unique natural isomorphism by the universal property. 

To proof that $\otimes^{\cM\boxtimes_{\cB}\cN}$ provides a monoidal structure now follows by use of the functoriality properties of $\boxtimes_{\cB}$ as detailed after Definition \ref{relativetensordef}. 
This way, we obtain the functors
\begin{align*}
\otimes^{\cM\boxtimes_{\cB}\cN}(\otimes^{\cM\boxtimes_{\cB}\cN}\boxtimes \ide_{\cM\boxtimes_{\cB}\cN}), && \otimes^{\cM\boxtimes_{\cB}\cN}(\ide_{\cM\boxtimes_{\cB}\cN}\boxtimes \otimes^{\cM\boxtimes_{\cB}\cN})
\end{align*}
as factorization via the universal property of $\boxtimes_{\cB}$. However, using that $\cM$, $\cN$ are strict monoidal categories, these are factorization of the same balanced functor through canonically isomorphic (iterated) relative tensor products. Hence the functors are isomorphic via a unique natural isomorphism. Uniqueness of the induced natural isomorphism ensures coherence, thus giving a monoidal structure.


If $\cM$ and $\cN$ are rigid, the so is $\cM\boxtimes\cN$. As a monoidal functor, the canonical factorization functor  $\cM\boxtimes\cN\to \cM\boxtimes_{\cB}\cN$ preserves duals. As this functor is essentially surjective, duals exit in the latter category. 
\end{proof}

Note that one can similarly prove a variation of Theorem \ref{relativeproductproperties} where $\cM$ is $\cB$-augmented and $\cN$ is $\overline{\cB}$-augmented as the inverse $\sigma^{-1}$ also satisfies $\otimes$-compatibility.
In particular, given a $\cB$-augmented monoidal category $\cM$, then $\cM\boxtimes_{\cB}\cM^{\oop}$ is a monoidal category. 

\begin{corollary}\label{braidedtensorcor}
If $\cM$ and $\cN$ are braided monoidal categories with functors  
\begin{align*}
\adj{\rF_{\cM}}{\cM}{\cB}{\rT_{\cM}}, && \adj{\rF_{\cN}}{\cN}{\cB}{\rT_{\cN}},
\end{align*}
of braided monoidal categories together with isomorphisms of monoidal functors 
\begin{align*}
\tau^{\cM}\colon \rF_{\cM}\rT_{\cM}\stackrel{\sim}{\Longrightarrow} \ide_{\cB},&&
\tau^{\cN}\colon \rF_{\cN}\rT_{\cN}\stackrel{\sim}{\Longrightarrow} \ide_{\cB},
\end{align*}
then $\cM\boxtimes_{\cB}\cN$ is a braided monoidal category.
\end{corollary}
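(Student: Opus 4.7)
The plan is to lift the braidings of $\cM$ and $\cN$ to a braiding on $\cM\boxtimes_\cB \cN$ via the universal property of the relative tensor product. By Lemma~\ref{augmentedbraided}, the hypotheses imply that both $\cM$ and $\cN$ carry a $\cB$-augmented structure with $\sigma=\Psi^{\cM}_{-,\rT_{\cM}(-)}$ respectively $\sigma=\Psi^{\cN}_{-,\rT_{\cN}(-)}$, so Theorem~\ref{relativeproductproperties} supplies a monoidal structure $\otimes^{\cM\boxtimes_{\cB}\cN}$ on $\cM\boxtimes_{\cB}\cN$. It remains to produce a braiding and check the hexagons.

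First I would construct a candidate braiding on elementary tensors. Given objects $X\otimes Y$ and $X'\otimes Y'$ of the image of $\cM\boxtimes\cN$ in $\cM\boxtimes_{\cB}\cN$, set
\begin{equation*}
\Psi_{X\otimes Y,\,X'\otimes Y'}\;:=\;\Psi^{\cM}_{X,X'}\otimes\Psi^{\cN}_{Y,Y'},
\end{equation*}
viewed as a morphism $(X\otimes^{\cM}X')\otimes(Y\otimes^{\cN}Y')\to(X'\otimes^{\cM}X)\otimes(Y'\otimes^{\cN}Y)$ in $\cM\boxtimes_{\cB}\cN$. This is natural on $\cM\boxtimes\cN$ in both arguments. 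To obtain a natural isomorphism defined on $(\cM\boxtimes_{\cB}\cN)\boxtimes(\cM\boxtimes_{\cB}\cN)$, I would invoke the universal property of $\boxtimes_{\cB}$ twice, exactly as in the construction of $\otimes^{\cM\boxtimes_{\cB}\cN}$ in the proof of Theorem~\ref{relativeproductproperties}. The main obstacle is verifying that, upon fixing $X'\otimes Y'$ (resp.\ $X\otimes Y$), the resulting natural transformation between the two $\cB$-balanced functors $\cM\boxtimes\cN\to\cM\boxtimes_{\cB}\cN$ is compatible with the $\cB$-balancings. Since in the braided case $\sigma$ is determined by $\Psi^{\cM}$ (resp.\ $\Psi^{\cN}$) by Lemma~\ref{augmentedbraided}, the required compatibility reduces to the hexagon identities for $\Psi^{\cM}$ and $\Psi^{\cN}$ applied to objects of the form $\rT_{\cM}(B)$, $\rT_{\cN}(B)$, combined with the hypothesis that $\rF_{\cM}$, $\rF_{\cN}$, $\rT_{\cM}$, $\rT_{\cN}$ are functors of braided monoidal categories. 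The uniqueness clause of the universal property then yields a unique induced natural isomorphism
\begin{equation*}
\Psi\colon\otimes^{\cM\boxtimes_{\cB}\cN}\natisomorph\otimes^{\cM\boxtimes_{\cB}\cN,\,\oop}.
\end{equation*}

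Second, I would verify the two hexagon axioms for $\Psi$. On pure tensor objects of the form $X\otimes Y$, the hexagons for $\Psi$ decompose as the external tensor product of the hexagons for $\Psi^{\cM}$ and $\Psi^{\cN}$, and hence hold. To extend this to arbitrary objects of $\cM\boxtimes_{\cB}\cN$, I would again apply the universal property: each side of a hexagon is a natural transformation between finite-colimit-preserving functors on a triple relative tensor product, and the two sides agree on the essential image of $\cM\boxtimes\cN\boxtimes\cM\boxtimes\cN$; by the factorization/uniqueness of $\boxtimes_{\cB}$ they agree globally. The invertibility of $\Psi$ is inherited from that of $\Psi^{\cM}$ and $\Psi^{\cN}$. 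The genuinely delicate step throughout is tracking the interchange between $\sigma^{\cM}$, $\sigma^{\cN}$ and the external $\Psi^{\cB}$, which is forced to be coherent precisely because $\rF_{\cM}$ and $\rF_{\cN}$ are braided, so no obstruction arises and the corollary follows.
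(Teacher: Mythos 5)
Your proposal is correct and follows essentially the same route as the paper: equip $\cM$ and $\cN$ with $\cB$-augmented structures via Lemma~\ref{augmentedbraided}, obtain the monoidal structure from Theorem~\ref{relativeproductproperties}, and descend the external braiding $\Psi^{\cM}\boxtimes\Psi^{\cN}$ along the universal functor. The paper's proof is terser (it justifies the descent by essential surjectivity of the canonical functor), whereas you spell out the key point it leaves implicit, namely that $\Psi^{\cM}\boxtimes\Psi^{\cN}$ is a morphism of $\cB$-balanced functors because $\sigma$ is given by the braidings and the compatibility reduces to naturality plus the hexagons.
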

\begin{proof}
We may equip both $\cM$ and $\cN$ with the structure of a $\cB$-augmented monoidal category using Lemma \ref{augmentedbraided}. It then follows that $\cM\boxtimes_{\cB}\cN$ is a monoidal category using Proposition \ref{relativeproductproperties}. It is directly verified that if $\cM$, $\cN$ are braided with braidings $\Psi^{\cM}$, $\Psi^{\cN}$, then $\Psi^{\cM}\boxtimes \Psi^{\cN}$ defines a braiding on $\cM\boxtimes \cN$, which under application of the functor $\boxtimes_{\cB}\boxtimes\boxtimes_{\cB}$ descents to a braiding on $\cM\boxtimes \cN$ by essential surjectivity.
\end{proof}

Note that for braided fusion categories, a similar construction as obtained in Corollary \ref{braidedtensorcor} has been carried out in \cite{Gre}. To conclude this section, we shall examine some examples of representation-theoretic nature.

\begin{example}\label{tensorBBmod}
Let $B\in \Hopf(\cB)$, and consider $\cM=\lmod{B}(\cB)$ as a $\cB$-augmented monoidal category as in Example \ref{bialgebraexpl}. It was shown in Proposition \ref{Bmodreltensor} that $\cM\boxtimes_{\cB}\cM^{\oop}$ is equivalent to $\lrmod{B}{B}(\cB)$ as a $\cB$-bimodule category. The tensor product on the latter category is given by $(V\otimes W,\triangleright_{V\otimes W},\triangleleft_{V\otimes W} )$, where
\begin{align}
\triangleright_{V\otimes W}&=(\triangleright_V\otimes \triangleright_W)(\ide_B\otimes \Psi_{B,V}\otimes \ide_W)(\Delta_B\otimes \ide_{V\otimes W}),\\
\triangleleft_{V\otimes W}&=(\triangleleft_V\otimes\triangleleft_W)(\ide_V\otimes \Psi_{W,B}\otimes \ide_B)(\ide_{V\otimes W}\otimes\Delta_B).
\end{align}
for $(V,\triangleright_V,\triangleleft_V)$, $(W,\triangleright_W,\triangleleft_W)$ objects of  $\lrmod{B}{B}(\cB)$, and accordingly for morphisms. Recall the equivalence of $\ov{\cB}$-augmented monoidal categories $\rmod{B}(\cB)\simeq \left( \lmod{B}(\cB)\right)^{\oop}=\cM^{\oop}$ from Lemma \ref{oppositeaugmented}. By Theorem \ref{relativeproductproperties}, the category $\cM\boxtimes_{\cB} \cM^{\oop}$ has the structure of a monoidal category which is equivalent, under the equivalence from Proposition \ref{Bmodreltensor}, to the monoidal structure on $\lrmod{B}{B}(\cB)$ described above.
\end{example}


\subsection{Balanced Categorical Bimodules}\label{balancedbimodsect}

We saw in Section \ref{catmodules} that a categorical $\cM$-$\cN$-bimodule over  monoidal categories $\cM$, $\cN$ in $\Cat^c$ can be defined as a monoidal functor $\cM\boxtimes \cN^{\oop}\to \Ends^c(\cV)$. Working with a $\cB$-augmented monoidal categories $\cM$ and $\cN$, we present the following definition of a suitable category of \emph{$\cB$-balanced} bimodules. Note for this that $\cM$ and $\cN$ are $\cB$-bimodules with action induced, via  $\rT_{\cM}\colon \cB\to \cM$, respectively $\rT_{\cN}\colon \cB\to \cN$, from the regular bimodule structure.

\begin{definition} Let $\cM$ and $\cN$ be $\cB$-augmented monoidal categories. 
A \emph{$\cB$-balanced $\cM$-$\cN$-bimodule} is a $\cM$-$\cN$-bimodule $\cV$ as in Proposition \ref{bimoduledata}, together with a natural isomorphism 
$$\beta\colon \triangleleft(\ide_{\cV}\boxtimes\rT_{\cN})\natisomorph \triangleright(\rT_{\cM}\boxtimes\ide_{\cV}),$$
satisfying the coherence condition that the diagrams (\ref{betacoh1}) and (\ref{betacoh2}) commute for any objects $V$ of $\cV$, $X,Y$ of $\cB$, $M$ of $\cM$, and $N$ of $\cN$:
\begin{gather}\label{betacoh1}\vcenter{\hbox{\xymatrix{
V\triangleleft \rT_{\cN}(X\otimes Y)\ar[rr]^{\beta_{V,X\otimes Y}}\ar[d]_{V\triangleleft (\mu^{\rT_{\cN}}_{X,Y})^{-1}} && \rT_{\cM}(X\otimes Y)\triangleright V\ar[d]^{\left(\mu_{X,Y}^{\rT_{\cM}}\right)^{-1}\triangleright V}\\
V\triangleleft \rT_{\cN}(X)\otimes \rT_{\cN}(Y)\ar[d]_{\xi_{V, \rT_{\cN}(X),\rT_{\cN}(Y)}} && \rT_{\cM}(X)\otimes \rT_{\cM}(Y)\triangleright V\ar[d]^{\chi_{\rT_{\cM}(X),\rT_{\cM} (Y),V}}\\
(V\triangleleft \rT_{\cN}(X))\triangleleft \rT_{\cN}(Y)\ar[d]_{\beta_{V,X}\triangleleft \rT_{\cN}(Y)}&&\rT_{\cM}(X)\triangleright(\rT_{\cM}(Y)\triangleright V)\ar[d]^{\rT_{\cM}(X)\triangleright \beta^{-1}_{V,Y}}\\
(\rT_{\cM}(X)\triangleright V)\triangleleft \rT_{\cN}(Y)\ar[rr]^{\zeta_{\rT_{\cM}(X),V,\rT_{\cN}(Y)}} && \rT_{\cM}(X)\triangleright(V\triangleleft \rT_{\cN}(Y));
}}}
\end{gather}
\begin{gather}
\label{betacoh2}
\vcenter{\hbox{
\xymatrix{
M\triangleright ((V\triangleleft \rT_{\cN}(X))\triangleleft N)\ar[rr]^{M\triangleright (\beta_{V,X}\triangleleft N)}\ar[d]_{M\triangleright \xi^{-1}_{V,\rT_{\cN}(X),N}}&&M\triangleright (( \rT_{\cM}(X)\triangleright V)\triangleleft N)\ar[d]^{M\triangleright \zeta_{\rT_{\cM}(X),V,N}}\\
M\triangleright (V\triangleleft \rT_{\cN}(X)\otimes N)\ar[d]_{M\triangleright(V\triangleleft \sigma^{-1}_{N,X})}&&M\triangleright ( \rT_{\cM}(X)\triangleright (V\triangleleft N))\ar[d]^{\chi^{-1}_{M,\rT_{\cM}(X),V\triangleleft N}}\\
M\triangleright (V\triangleleft N\otimes \rT_{\cN}(X))\ar[d]_{M\triangleright \xi_{V,N,\rT_{\cN}(X)}}&&M\otimes\rT_{\cM}(X)\triangleright (V\triangleleft N)\ar[d]^{\sigma_{M,X}\triangleright (V\triangleleft N)}\\
M\triangleright ((V\triangleleft N)\triangleleft \rT_{\cN}(X))\ar[d]_{\zeta^{-1}_{M, V\triangleleft N,\rT_{\cN}(X)}}&&\rT_{\cM}(X)\otimes M\triangleright (V\triangleleft N)\ar[d]^{\chi_{\rT_{\cM}(X),M,V\triangleleft N}}\\
(M\triangleright (V\triangleleft N))\triangleleft \rT_{\cN}(X)\ar[rr]^{\beta_{M\triangleright(V\triangleleft N),X}}&&\rT_{\cM}(X)\triangleright( M\triangleright (V\triangleleft N)).
}}}
\end{gather}
In addition, we require
\begin{align}
\beta_{V,\one}=\ide_{V}.
\end{align}

A \emph{morphism of $\cB$-balanced $\cM$-$\cN$-bimodules}  $\rG\colon (\cV,\beta^{\cV})\to (\cW,\beta^{\cW})$ is a functor $(\rG,\lambda,\rho)\colon \cV\to \cW$ of $\cM$-$\cN$-bimodules such that the diagram
\begin{align}\label{betacoh}
\vcenter{\hbox{
\xymatrix{
\rF(V\triangleleft_{\cV}\rT_{\cN} (X))\ar[rr]^{\rF(\beta^{\cV}_{V,X})}\ar[d]^{\rho_{V,\rT_{\cN} (X)}}&&\rF(\rT_{\cM} (X)\triangleright_{\cV}V)\ar[d]^{\lambda_{\rT_{\cM}(X),V}}\\
\rF(V)\triangleleft_{\cW}\rT_{\cN} (X)\ar[rr]^{\beta^{\cW}_{\rF(V),X}}&&\rT_{\cM}(X)\triangleright_{\cW}\rF(V)
}}}
\end{align}
commutes for all objects $V$ of $\cV$ and $X$ of $\cB$.

A $2$-morphism of $\cB$-balanced $\cM$-$\cN$-bimodules is just a $2$-morphism of $\cM$-$\cN$-bimodules. This way, we obtain the $2$-category $\BiMod_{\cM\text{--}\cN}^{\cB}$ of $\cB$-balanced bimodules over $\cM$ as a $2$-full subcategory of $\BiMod_{\cM\text{--}\cN}$.
Given two $\cB$-balanced $\cM$-$\cN$-bimodules $\cV$, $\cW$, we write $\cHom_{\cM\text{--}\cN}^{\cB}(\cV,\cW)$ for the category for $\cB$-balanced morphisms of $\cM$-bimodules from $\cV$ to $\cW$.
We refer to $\cB$-balanced $\cM$-$\cM$-bimodules simply as $\cB$-balanced $\cM$-bimodules.
\end{definition}

If we interpret the concept of a $\cB$-augmented monoidal category as a categorical analogue of the concept of a $C$-augmented $C$-algebra $R$, over a commutative ring $C$, then $\cB$-balanced bimodules are the categorical analogue of an $R$-bimodule such that the left and right $C$-action coincide.


A large supply of examples of $\cB$-balanced bimodules will be constructed in Proposition \ref{balancingfunctor}.

\begin{proposition}\label{balancedbimondprop}
The datum of a $\cB$-balanced $\cM$-$\cN$-bimodule for $\cB$-augmented monoidal categories $\cM$ and $\cN$ is equivalent to the datum of a monoidal functor $\triangleright \colon \cM\boxtimes \cN^{\oop}\to \Ends^c(\cV)$ together with a structural natural isomorphism 
$$\mu^{\triangleright}\colon \triangleright \boxtimes \triangleright \natisomorph \triangleright (\ide\otimes \ide)$$
such that the functor $\triangleright$ is $\cB$-balanced with balancing $\eta$, and satisfies the compatibility that the diagrams (\ref{balancingaction}) and (\ref{balancingaction2}) commute for any objects $M,M'$ of $\cM$, $N,N'$ of $\cN$, and $X$ of $\cB$:
\begin{gather}\label{balancingaction}
\vcenter{\hbox{\xymatrix{
\triangleright (M\boxtimes N)\triangleright((M'\otimes \rT(X))\otimes N')\ar[rr]^{\triangleright(M\otimes N)(\eta_{M',X,N'})}\ar[d]^{\mu^{\triangleright}_{M\otimes N,(M'\otimes \rT(X))\otimes N'}}&&\triangleright (M\otimes N)\triangleright(M'\otimes (\rT(X)\otimes N'))\ar[d]_{\mu^{\triangleright}_{M\otimes N,M'\otimes (\rT(X)\otimes N')}}\\
\triangleright ((M\otimes M'\otimes \rT(X) )\otimes (N'\otimes N))\ar[rr]^{\eta_{M\otimes M',X,N'\otimes N}}&&\triangleright((M\otimes M')\otimes (\rT(X)\otimes N'\otimes N));
}}}
\end{gather}
\begin{gather}
\label{balancingaction2}
\vcenter{\hbox{\xymatrix{
\triangleright((M\otimes \rT(X))\otimes N)\triangleright(M'\otimes N')\ar[d]^{\mu^\triangleright_{(M\otimes \rT(X))\otimes N,M'\otimes N'}}\ar[rr]^{\eta_{M,X,N}\triangleright(M'\otimes N')}&&\triangleright(M\otimes (\rT(X)\otimes N))\triangleright(M'\otimes N')\ar[d]_{\mu^\triangleright_{M\otimes (\rT(X)\otimes N),M'\otimes N'}}\\
\triangleright((M\otimes \rT(X)\otimes M')\otimes (N'\otimes N))\ar[d]^{\triangleright((M\otimes \sigma^{-1}_{M',X})\otimes (N'\otimes N))}&&\triangleright((M\otimes M')\otimes (N'\otimes \rT(X)\otimes N))\ar[d]_{\triangleright((N\otimes N')\otimes (\sigma_{N',X}\otimes N))}\\
\triangleright((M\otimes M'\otimes \rT(X))\otimes (N'\otimes N))\ar[rr]^{\eta_{M\otimes M',X,N'\otimes N}}&&\triangleright((M\otimes  M')\otimes (\rT(X)\otimes N'\otimes N)).
}}}
\end{gather}

Moreover, a morphism of $\cB$-balanced $\cM$-bimodules $(\rG,\pi)\colon (\cV,\triangleright_{\cV})\to (\cW,\triangleright_{\cW})$ is a morphism of bimodules such that the natural isomorphism $\pi$ in 
\begin{align}\label{pidiagram}
\vcenter{\hbox{
\xymatrix{
\cM\boxtimes\cN^{\oop}\ar[rr]^{\triangleright_{\cV}}\ar[d]_{\triangleright_{\cW}}&& \Ends^c(\cV)\ar@{=>}[lld]_{\pi}\ar[d]^{\rG(-)}\\
\Ends^c(\cW)\ar[rr]^{(-)\rG}&&\Homs^c(\cV,\cW)
}}}
\end{align}
is one of $\cB$-balanced functors.
\end{proposition}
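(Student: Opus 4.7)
The plan is to establish a bijective correspondence between $\cB$-balancings $\beta$ on an $\cM$-$\cN$-bimodule $\cV$ and $\cB$-balancings $\eta$ on the associated monoidal functor $\triangleright\colon \cM\boxtimes\cN^{\oop}\to \Ends^c(\cV)$ satisfying (\ref{balancingaction})--(\ref{balancingaction2}), and then to verify that the two corresponding notions of morphism match.

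Starting from a $\cB$-balanced bimodule $(\cV,\beta)$, I define the component
\[
\eta_{M,X,N,V}\colon \triangleright((M\otimes \rT_{\cM}(X))\boxtimes N)(V)\longrightarrow \triangleright(M\boxtimes(N\otimes \rT_{\cN}(X)))(V)
\]
by first using the bimodule associators $\chi,\zeta,\xi$ to rewrite the source as $M\triangleright(\rT_{\cM}(X)\triangleright V)\triangleleft N$, applying $M\triangleright \beta^{-1}_{V,X}\triangleleft N$ to reach $M\triangleright(V\triangleleft\rT_{\cN}(X))\triangleleft N$, pushing $\rT_{\cN}(X)$ past $N$ by means of $V\triangleleft \sigma_{N,X}^{-1}$ conjugated by $\zeta$ and $\xi$, and reassociating to the target. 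Naturality in $V$ is inherited from that of $\beta$ and $\sigma$. Specializing $M=N=\one$ yields $\eta_{\one,X,\one,V}=\beta^{-1}_{V,X}$, so the assignment $\beta\mapsto \eta$ is injective.

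Next I verify that the axioms match under this substitution. The coherence (\ref{balancedcoh}) for $\eta$, after cancelling identities produced by the strict monoidal assumption \ref{strict}, reduces precisely to diagram (\ref{betacoh1}), so this axiom corresponds to compatibility of $\beta$ with $\mu^{\rT_{\cM}}$ and $\mu^{\rT_{\cN}}$. The two compatibility diagrams (\ref{balancingaction}) and (\ref{balancingaction2}) with $\mu^{\triangleright}$ translate, respectively, to the statement that $\beta$ is compatible with left $\cM$-action and with right $\cN$-action. After inserting the defining formula for $\eta$ and cancelling the instances of $\beta^{-1}$ and $\sigma^{-1}$ that cancel against themselves on the two sides of the hexagons, the remaining content is exactly diagram (\ref{betacoh2}) (and its symmetric counterpart on the $\cM$ side, which follows by an analogous calculation). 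Conversely, given $\eta$ satisfying all stated axioms, I set $\beta_{V,X}:=(\eta_{\one,X,\one,V})^{-1}$, and the coherences (\ref{betacoh1}), (\ref{betacoh2}), $\beta_{V,\one}=\ide_V$ follow by specializing the $\eta$-axioms at $M=\one$ or $N=\one$ and using the unit normalizations $\sigma_{\one,V}=\ide$, $\eta_{M,\one,N}=\ide$.

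For morphisms, a bimodule morphism $(\rG,\lambda,\rho)$ corresponds to the natural transformation $\pi$ of (\ref{pidiagram}) via $\pi_{M\boxtimes N}=\lambda_{M,\,-\triangleleft N}\circ \rG(\rho_{-,N})$. The condition (\ref{betacoh}) that $\rG$ intertwines $\beta^{\cV}$ with $\beta^{\cW}$, together with the compatibilities (\ref{modulemorph1})--(\ref{modulemorph3}) of $\lambda,\rho$ with the associators, translates directly into the $\cB$-balancing condition (\ref{balancedmor}) for $\pi$ with respect to $\eta^{\cV}$ and $\eta^{\cW}$. Setting $M=N=\one$ in the latter recovers the former, giving the reverse implication.

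I expect the main obstacle to be purely diagrammatic book-keeping: organizing the nested instances of $\chi,\xi,\zeta,\sigma,\mu^{\rT}$ and $\mu^{\triangleright}$ so that the large hexagons (\ref{balancingaction})--(\ref{balancingaction2}) collapse onto (\ref{betacoh1})--(\ref{betacoh2}) without leaving residual structure. This is where the augmentation axioms (\ref{augmenteddiag1})--(\ref{augmenteddiag2}) and the bimodule coherences of Proposition \ref{bimoduledata} do the actual work, while Assumption \ref{strict} eliminates the need to carry associators in $\cM,\cN$ throughout the comparison.
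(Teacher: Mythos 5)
Your overall strategy is the same as the paper's: produce $\eta$ from $\beta$ by conjugation with the bimodule associators, recover $\beta$ as $\eta_{\one,X,\one,V}^{-1}$ in the converse direction, and encode a morphism of balanced bimodules via a natural transformation $\pi$ assembled from $\lambda$ and $\rho$. However, there are two concrete problems. First, your formula for $\eta_{M,X,N,V}$ contains a superfluous factor $V\triangleleft\sigma^{-1}_{N,X}$. The paper's balancing is the bare conjugate
$(M\triangleright \xi^{-1}_{V,\rT_{\cN}(X),N})\,(M\triangleright (\beta^{-1}_{V,X}\triangleleft N))\,(M\triangleright \zeta^{-1}_{\rT_{\cM}(X),V,N})\,\chi_{M,\rT_{\cM}(X), V\triangleleft N}$,
whose codomain is $M\triangleright(V\triangleleft(\rT_{\cN}(X)\otimes N))=\triangleright(M\otimes(\rT(X)\otimes N))(V)$ --- exactly the object appearing in diagrams (\ref{balancingaction})--(\ref{balancingaction2}). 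Your extra $\sigma^{-1}_{N,X}$ lands instead in $M\triangleright(V\triangleleft(N\otimes\rT_{\cN}(X)))$, so the diagrams as stated do not even typecheck with your $\eta$; and if one adjusts the codomains, the two paths around (\ref{balancingaction}) would still differ by a factor of the form $N'\otimes\sigma^{-1}_{N,X}$ coming from Eq.\ (\ref{augmenteddiag1}), so the square fails to commute.

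Second, your dictionary between axioms is off. Diagram (\ref{balancingaction}) contains no $\sigma$ and is not an ``$\cM$-side counterpart of (\ref{betacoh2})''; no such separate axiom exists in the definition of a $\cB$-balanced bimodule. In the paper, (\ref{balancingaction}) is verified at $M'\otimes N'=\one\otimes\one$, where it reduces to the definition of $\eta$ in terms of $\eta_{\one,X,\one}=\beta^{-1}$, and the general case follows from compatibility of $\mu^{\triangleright}$ with tensor products; it carries no extra condition on $\beta$. The entire content of (\ref{betacoh2}) --- which mixes $\sigma_{M,X}$ and $\sigma_{N,X}$ on the two sides --- is carried by (\ref{balancingaction2}) alone, checked at $M\otimes N=\one\otimes\one$, while (\ref{balancedcoh}) corresponds to (\ref{betacoh1}) as you say. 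Dropping the extra $\sigma^{-1}$ from your $\eta$ and re-sorting which diagram encodes which axiom repairs the argument and brings it in line with the paper's proof.
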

\begin{proof}
Assume given a $\cB$-balanced module $(\cV, \beta)$. Recall Proposition \ref{bimoduledata} to translate this data into a monoidal functor $\triangleright_{\cV}\colon \cM\boxtimes \cN^{\oop}\to \Ends^c(\cV)$, by setting $$\triangleright(M\otimes N)(V)=M\triangleright (V\triangleleft N),$$ for objects $M$ of $\cM$, $N$ of $\cN$, and $V$ of $\cV$. 
To provide a $\cB$-balancing for the action functor $\triangleright$,  we require natural isomorphisms $\eta_{M,X,N,V}\colon M\otimes \rT(X)\triangleright (V\triangleleft N)\isomorph M\triangleright (V\triangleleft \rT(X)\otimes N).$
We define
\begin{align*}
\eta_{M,X,V,N}:=(M\triangleright \xi^{-1}_{V,\rT_{\cN}(X),N})(M\triangleright (\beta^{-1}_{V,X}\triangleleft N))(M\triangleright \zeta^{-1}_{\rT_{\cM}(X),V,N})\chi_{M,\rT_{\cM}(X), V\triangleleft N}.
\end{align*}
It then follows that $\eta$ satisfies Condition (\ref{balancedcoh}) of a $\cB$-balancing isomorphism using Condition (\ref{betacoh1}), naturality, and coherence conditions of the bimodule structure from Eqs. (\ref{modulecoherence2})--(\ref{bimodcoherence2}). 

Now we define
\begin{align*}\mu^\triangleright_{M\otimes N,M'\otimes N',V}&\colon \triangleright(M\otimes N)\triangleright(M'\otimes N')(V)\isomorph \triangleright((M\otimes M')\otimes (N'\otimes N))(V),\\
\mu^\triangleright_{M\otimes N,M'\otimes N',V}&:=\chi^{-1}_{M,M',V\triangleleft N'\otimes N}(M\triangleright(M'\triangleright \xi^{-1}_{V,N',N}))(M\triangleright \zeta_{M',V\triangleleft N',N}).
\end{align*}

Next, we observe that $\eta_{\one,X,V,\one}=\beta_{V,X}^{-1}$, and the very definition of $\eta_{M,X,V,N}$ implies that 
$$\eta_{M,X,V,N}\mu_{M\otimes N,\rT_{\cM}(X)\otimes\one,V}=\mu^\triangleright_{M\otimes N,\one\otimes \rT_{\cN}(X),V}(M\triangleright (\eta_{\one,X,V,\one}\triangleleft N)).$$
This proves the compatibility condition of $\eta$ with the monoidal structure as in diagram (\ref{balancingaction}) in the case where $M'\otimes N'=\one\otimes \one$. The general case follows as $\mu^{\triangleright}$ is compatible with tensor products.
The second condition, Equation (\ref{balancingaction2}), can also first verified if $M\otimes N=\one\otimes \one$. In this case, it precisely reduces to Equation (\ref{betacoh2}). Again, using that $\mu^{\triangleright}$ is compatible with tensor products, the general case follows.

One further checks that given a morphism of $\cB$-balanced bimodules $\rG$, by the compatibility condition of Eq. (\ref{betacoh}) together with compatibility with the bimodule structures, $\rG$ is compatible with the balancing isomorphism $\eta$ in the sense of Eq. (\ref{balancedcoh}). This amounts to commutativity of the diagram
\begin{align}\label{balancedmodcoh}
\vcenter{\hbox{\xymatrix{
\rG(M\otimes \rT_{\cM}(X)\triangleright (V\triangleleft N))\ar[rr]^{\rG(\eta^{\cV}_{M,X,N,V})}\ar[d]_{\lambda_{M\otimes \rT_{\cM}(X),V\triangleright N}}&&\rG(M\triangleright (V\triangleleft \rT_{\cN}(X) \otimes N))\ar[d]^{\lambda_{M,V\triangleright \rT_{\cN}(X) \otimes N}}\\
M\otimes \rT_{\cM}(X)\triangleright \rG(V\triangleleft N)\ar[d]_{M\otimes \rT_{\cN}(X)\triangleright \rho_{V,N}}&&M\triangleright \rG(V\triangleleft \rT_{\cN}(X) \otimes N)\ar[d]^{M\triangleright \rho_{V, \rT_{\cN}(X)\otimes N}}\\
M\otimes \rT_{\cM}(X)\triangleright (\rG(V)\triangleleft N)\ar[rr]^{\eta^{\cW}_{M,X,N,\rG(V)}}&&M\triangleright (\rG(V)\triangleleft \rT_{\cN}(X) \otimes N).
}}}
\end{align}
A natural isomorphism $\pi$ as in the diagram of Eq. (\ref{pidiagram}) is obtained as 
$$\pi_{M,N}=(M\triangleright \rho_{(-),N})\lambda_{M,(-)\triangleleft N}.$$
It is a morphism of $\cB$-balanced functors using that $\rG$ satisfies the condition of Eq. (\ref{betacoh}).

Conversely, given $\triangleright_{\cV}$ as in the statement of the proposition, we define 
$$\beta_{V,X}:=\eta_{\one,X,\one, V}^{-1}.$$
Conditions (\ref{betacoh1})--(\ref{betacoh2}) are now a consequence of Eqs. (\ref{balancingaction}), (\ref{balancingaction2}) and (\ref{balancedcoh}). A morphism $(\rG,\pi)$ satisfying the conditions of the statement of the proposition provides $\lambda_{M,V}=\pi_{M,V,\one}$ and $\rho_{V,N}=\pi_{\one,V,N}$ which are compatible with $\eta$, so in particular, are compatible with $\beta_{V,X}=\eta_{\one,X,\one, V}^{-1}$.

It is readily verified that the two functors described give an equivalence of categories.
\end{proof}

\begin{lemma}\label{regularbalanced}
If $\cM$ is a $\cB$-augmented monoidal category, then the regular bimodule on $\cM$ is a $\cB$-balanced bimodule over $\cM$.
\end{lemma}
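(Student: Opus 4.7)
The plan is to define the $\cB$-balancing on the regular bimodule by
$\beta_{V, X} := \sigma_{V, X} \colon V \otimes \rT(X) \to \rT(X) \otimes V$,
using the natural isomorphism $\sigma$ that is part of the $\cB$-augmented structure of $\cM$. Naturality in $V$ and $X$ is immediate from naturality of $\sigma$. The point is that, since the module structural isomorphisms $\chi, \xi, \zeta$ for the regular bimodule are all identities under Assumption~\ref{strict}, verifying the required coherence conditions for $\beta$ collapses onto compatibility relations for $\sigma$ that are already part of Definition~\ref{augmentedmon}.

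First I would verify diagram~(\ref{betacoh1}). With $\xi, \zeta, \chi$ reduced to identities, commutativity of the outer square becomes the equation
\[
\sigma_{V, X\otimes Y} = \bigl(\mu^{\rT}_{X,Y} \otimes \ide_V\bigr)\circ\bigl(\ide_{\rT(X)}\otimes \sigma_{V, Y}\bigr)\circ\bigl(\sigma_{V, X}\otimes \ide_{\rT(Y)}\bigr)\circ \bigl(\ide_V\otimes (\mu^{\rT}_{X,Y})^{-1}\bigr),
\]
which is exactly the content of diagram~(\ref{augmenteddiag2}) in the definition of a $\cB$-augmented monoidal category (after relabelling the role of $X$ in~(\ref{augmenteddiag2}) as $V$ here, and $V, W$ there as $X, Y$).

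Next I would treat diagram~(\ref{betacoh2}). Again all the $\chi, \xi, \zeta$ degenerate to identities, and every object appearing in the diagram is a permutation of the tensor product $M \otimes V \otimes \rT(X) \otimes N$. Commutativity of the outer square then reduces to
\[
\sigma_{M\otimes V\otimes N,\, X} = (\sigma_{M,X}\otimes \ide_V\otimes \ide_N)\circ(\ide_M\otimes \sigma_{V, X}\otimes \ide_N)\circ(\ide_M\otimes \ide_V\otimes \sigma_{N, X}),
\]
which follows by iterating diagram~(\ref{augmenteddiag1}) twice. The unit condition $\beta_{V, \one} = \ide_V$ is the equality $\sigma_{V, \one} = \ide_V$ already stipulated in Definition~\ref{augmentedmon}.

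No serious obstacle is anticipated: once the strictness assumption is used to kill off the $\chi, \xi, \zeta$ terms, the two coherence axioms required of a $\cB$-balancing become precisely the two tensor-compatibility axioms~(\ref{augmenteddiag1}) and~(\ref{augmenteddiag2}) built into the definition of $\sigma$. The slight care needed is in matching up the orientations (use of $\beta$ versus $\beta^{-1}$, and of $\sigma$ versus $\sigma^{-1}$) in the column arrows of~(\ref{betacoh1})--(\ref{betacoh2}), but this is a purely clerical check.
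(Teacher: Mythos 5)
Your proposal is correct and matches the paper's proof, which simply states that the augmentation isomorphism $\sigma$ provides the $\cB$-balancing on the regular bimodule; you have merely written out the verification that, once the structural isomorphisms $\chi,\xi,\zeta$ of the regular bimodule degenerate to identities, diagrams (\ref{betacoh1}) and (\ref{betacoh2}) reduce to (\ref{augmenteddiag2}) and an iteration of (\ref{augmenteddiag1}), respectively. The reductions you state are accurate, so nothing is missing.
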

\begin{proof}
The natural isomorphism $\sigma$ which is part of the definition of a $\cB$-augmented monoidal category provides a $\cB$-balancing for the regular module. 
\end{proof}

\begin{theorem}\label{bimoduletheorem}The $\Bbbk$-linear bicategories $\BiMod_{\cM\text{--}\cN}^{\cB}$ and $\lmod{\cM\boxtimes_{\cB}\cN^{\oop}}$ are biequivalent.
\end{theorem}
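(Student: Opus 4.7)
The plan is to leverage Proposition \ref{balancedbimondprop} together with the universal property of $\cM\boxtimes_{\cB}\cN^{\oop}$ and its monoidal structure from Theorem \ref{relativeproductproperties}. Recall that by the discussion at the end of Section \ref{catmodules}, the $2$-category $\BiMod_{\cM\text{--}\cN}$ is biequivalent to $\lmod{\cM\boxtimes\cN^{\oop}}$, i.e. an $\cM$-$\cN$-bimodule is the same as a monoidal functor $\triangleright\colon \cM\boxtimes\cN^{\oop}\to \Ends^c(\cV)$. By Proposition \ref{balancedbimondprop}, equipping this with a $\cB$-balancing on the bimodule is the same as equipping $\triangleright$ with a $\cB$-balancing isomorphism $\eta$ satisfying the monoidal compatibility conditions (\ref{balancingaction})--(\ref{balancingaction2}).

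The first step is to apply the universal property of $\cM\boxtimes_{\cB}\cN^{\oop}$ from Definition \ref{relativetensordef} to the $\cB$-balanced $\Bbbk$-linear functor underlying $\triangleright$. This yields a factorization $\widetilde{\triangleright}\colon \cM\boxtimes_{\cB}\cN^{\oop}\to \Ends^c(\cV)$, unique up to a coherent natural isomorphism, together with a balanced isomorphism $\widetilde{\triangleright}\circ \rT_{\cM\boxtimes\cN^{\oop}}\natisomorph \triangleright$. The next step is to promote $\widetilde{\triangleright}$ to a monoidal functor. For this, I would use that the monoidal product $\otimes^{\cM\boxtimes_{\cB}\cN^{\oop}}$ of Theorem \ref{relativeproductproperties} is itself defined as a factorization via the universal property. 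Then the two composites $\widetilde{\triangleright}\circ \otimes^{\cM\boxtimes_{\cB}\cN^{\oop}}$ and $\circ\circ(\widetilde{\triangleright}\boxtimes\widetilde{\triangleright})$ can both be identified with factorizations of the same balanced functor on $(\cM\boxtimes\cN^{\oop})\boxtimes(\cM\boxtimes\cN^{\oop})$, using the monoidal coherence $\mu^\triangleright$ of the original bimodule together with Eqs. (\ref{balancingaction})--(\ref{balancingaction2}) to verify that the putative monoidal structure of $\widetilde{\triangleright}$ is itself a morphism of $\cB$-balanced functors. Uniqueness of such factorizations then supplies a canonical natural isomorphism $\mu^{\widetilde{\triangleright}}$, and coherence of the monoidal structure on $\widetilde{\triangleright}$ is forced by the corresponding uniqueness.

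For the functor in the reverse direction, I would restrict any $\cM\boxtimes_{\cB}\cN^{\oop}$-module $\cV$ along the canonical monoidal functor $\cM\boxtimes\cN^{\oop}\to \cM\boxtimes_{\cB}\cN^{\oop}$. Restriction turns $\cV$ into an $\cM\boxtimes\cN^{\oop}$-module, hence an $\cM$-$\cN$-bimodule, and the $\cB$-balancing $\eta$ on the action functor is inherited from the $\cB$-balancing present in the structure of $\cM\boxtimes_{\cB}\cN^{\oop}$ (applied via the action on $\cV$). Translating back through Proposition \ref{balancedbimondprop} produces the isomorphism $\beta\colon \triangleleft(\ide_{\cV}\boxtimes \rT_{\cN})\natisomorph \triangleright(\rT_{\cM}\boxtimes\ide_{\cV})$, and the coherence diagrams (\ref{betacoh1})--(\ref{betacoh2}) follow from the $\cB$-balancing axioms together with Eqs. (\ref{augmenteddiag1})--(\ref{augmenteddiag2}). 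The two constructions are mutually inverse up to the (unique) natural isomorphism exhibited by Schauenburg's uniqueness statement underlying Definition \ref{relativetensordef}, giving a biequivalence of objects.

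It remains to promote the biequivalence to morphisms and $2$-morphisms. A $1$-morphism of $\cB$-balanced bimodules is, via Proposition \ref{balancedbimondprop}, precisely a morphism of modules over $\cM\boxtimes\cN^{\oop}$ whose coherence natural isomorphism $\pi$ is itself a morphism of $\cB$-balanced functors (diagram (\ref{balancedmodcoh})). By the $2$-categorical version of the universal property \cite{Scha}*{Proposition 3.4}, such $\pi$ factors uniquely through a natural transformation of the induced action functors on $\cM\boxtimes_{\cB}\cN^{\oop}$, which is exactly a $1$-morphism in $\lmod{\cM\boxtimes_{\cB}\cN^{\oop}}$. On $2$-morphisms, no further balancing condition arises (as noted in the definition), matching modifications on both sides. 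I expect the main obstacle to be the bookkeeping in the monoidal step: showing that both monoidal candidates for $\mu^{\widetilde{\triangleright}}$ agree as morphisms of $\cB$-balanced functors demands a careful but routine chase through the coherence conditions (\ref{balancingaction})--(\ref{balancingaction2}), and the associator/unit axioms for $\widetilde{\triangleright}$ follow from the universal uniqueness once existence is established.
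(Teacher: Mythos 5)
Your proposal is correct and follows essentially the same route as the paper: both use Proposition \ref{balancedbimondprop} to translate a $\cB$-balanced bimodule into a monoidal functor on $\cM\boxtimes\cN^{\oop}$ with a compatible balancing, factor it through $\cM\boxtimes_{\cB}\cN^{\oop}$ via the universal property, use conditions (\ref{balancingaction})--(\ref{balancingaction2}) to show that $\mu^{\triangleright}$ is a morphism of $\cB$-balanced functors in each $\boxtimes$-factor so that it descends (with coherence forced by uniqueness of factorizations), and obtain the inverse by restriction along the canonical monoidal functor $\cM\boxtimes\cN^{\oop}\to\cM\boxtimes_{\cB}\cN^{\oop}$.
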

\begin{proof}
Proposition \ref{balancedbimondprop} provides a bifunctor $\BiMod_{\cM\text{--}\cN}^{\cB}\longrightarrow \lmod{\cM\boxtimes\cN^{\oop}}$. We want to show that the image $(\triangleright, \mu^\triangleright)\colon \cM\boxtimes \cN^{\oop}\to \Ends(\cV)$ of a $\cB$-balanced bimodule $\cV$ under this functor factors through $\cM\boxtimes_{\cB}\cN^{\oop}$. As $\triangleright$ is balanced, it follows that there exists a factorization $\triangleright_{\cB}\colon \cM\boxtimes_{\cB}\cN^{\oop}\to \Ends(\cV)$. We want to show that the natural isomorphism $\mu^\triangleright$ induces such a natural isomorphism after passing to the relative tensor product $\cM\boxtimes_{\cB}\cN^{\oop}$ defined in Theorem \ref{relativeproductproperties}. We proceed in two steps, as in the proof of Theorem \ref{relativeproductproperties}. 

First, $\mu^{\triangleright}\colon \triangleright\boxtimes \triangleright \natisomorph \triangleright \otimes$ is an isomorphism of $\cB$-balanced functors which are defined from $\cM\boxtimes \cN^{\oop}\boxtimes \cM\boxtimes \cN^{\oop}$ to $\Ends(\cV)$, and the $\cB$-balancing is defined in the last $\boxtimes$-product. Here, the balancing isomorphisms are given by
\begin{small}
\begin{gather*}
\triangleright(M\otimes N)\triangleright((M'\otimes \rT_{\cM}(X))\otimes N')(V)\xrightarrow{\triangleright(M\otimes N)(\eta_{M',X,N',V})} \triangleright(M\otimes N)\triangleright(M'\otimes (\rT_{\cM}(X)\otimes N'))(V),\\
 \triangleright((M\otimes M'\otimes \rT_{\cM}(X))\otimes(N'\otimes N)(V)\xrightarrow{\eta_{M\otimes M', X, N'\otimes N,V}} \triangleright((M\otimes M')\otimes(\rT_{\cM}(X)\otimes N'\otimes N)(V), 
\end{gather*}
\end{small}
for $\triangleright\boxtimes \triangleright$, respectively, $\triangleright~\otimes$. Condition (\ref{balancingaction}) gives that $\mu^{\triangleright}$ indeed is a morphism of $\cB$-balanced functors, satisfying the diagram (\ref{balancedmor}). Hence, by the universal property of the relative tensor product, there exists an isomorphism of the factorizations through $\cM\boxtimes \cN^{\oop}\boxtimes \cM\boxtimes_{\cB}\cN^{\oop}$. By uniqueness of the factorization, the coherences of $\mu^{\triangleright}$ making $\triangleright$ a monoidal functor also hold for the factorization. 

Second, the induced functors $\triangleright\boxtimes \triangleright, \triangleright~\otimes \colon \cM\boxtimes \cN^{\oop}\boxtimes \cM\boxtimes_{\cB} \cN^{\oop}\longrightarrow\Ends^c(\cV)$ from the first step are also a $\cB$-balanced functors in the first $\boxtimes$-tensor product. Here, we use the images of the balancing isomorphisms
\begin{small}
\begin{gather*}\triangleright((M\otimes \rT_{\cM}(X))\otimes N)\triangleright(M'\otimes N')(V)\xrightarrow{\eta_{M,X,N, \triangleright(M'\otimes N')(V)}}\triangleright(M\otimes (\rT_{\cM}(X)\otimes N))\triangleright(M'\otimes N')(V), \\
\triangleright((M\otimes \rT_{\cM}(X)\otimes M')\otimes (N'\otimes N))\xrightarrow{\eta'_{M\otimes N,X,M'\otimes N',V}}\triangleright ((M\otimes M')\otimes (N'\otimes \rT_{\cM}(X)\otimes N)),
\end{gather*}
\end{small}
after factoring through $\cM\boxtimes\cN^{\oop}\boxtimes_{\cM}\boxtimes_{\cB}\cN^{\oop}$,
where 
\begin{align*}
\eta'_{M\otimes N,X,M'\otimes N',V}:=& \triangleright((M\otimes M')\otimes (\sigma^{-1}_{N',X}\otimes N))\eta_{M\otimes M',X,N'\otimes N,V}\\&\circ \triangleright((M\otimes\sigma^{-1}_{M',X})\otimes (N'\otimes N))(V).
\end{align*}
Condition (\ref{balancingaction2}) gives that $\mu^{\triangleright}$ is a morphism of $\cB$-balanced functors with respect to these $\cB$-balancing isomorphisms. Hence, again by the universal property, we obtain a natural isomorphism on the factorizations through $(\cM\boxtimes \cN^{\oop})\boxtimes (\cM\boxtimes_{\cB} \cN^{\oop})$, which is coherent by uniqueness. Thus, there is a monoidal functor
$(\triangleright, \mu^\triangleright) \colon \cM\boxtimes_{\cB} \cN^{\oop}\longrightarrow \Ends(\cV).$

Conversely, given a monoidal functor $(\triangleright, \mu^\triangleright) \colon \cM\boxtimes_{\cB} \cN^{\oop}\longrightarrow \Ends(\cV),$ we can consider the monoidal functor 
$\triangleright \rT_{\cM,\cN^{\oop}}$, using the universal functor $\rT_{\cM,\cN^{\oop}}\colon \cM\boxtimes\cN^{\oop}\to \cM\boxtimes_{\cB}\cN^{\oop}$ from Definition \ref{reltensordef}, which is monoidal by Theorem \ref{relativeproductproperties}. Hence, this composition is again a monoidal functor. The $\cM$-$\cN$-bimodule thus obtained necessarily satisfies the conditions from Eqs. (\ref{balancingaction}--(\ref{balancingaction2}). Moreover, this assignment upgrades to a bifunctor $\lmod{\cM\boxtimes_{\cB}\cN^{\oop}}\longrightarrow \BiMod_{\cM\text{--}\cN}^{\cB}$ which forms local equivalences with the bifunctor provided in the beginning of the proof.
\end{proof}

\subsection{The Relative Monoidal Center}\label{monoidalcentersect}

This section contains the main construction of the paper, a relative version of the monoidal center.

\begin{definition}
Let $\cM$ be a $\cB$-augmented monoidal category in $\Cat^c$. The \emph{relative monoidal center} of $\cM$ over $\cB$ is defined to be the monoidal category
\[
\cZ_\cB(\cM):=\cHom_{\cM\text{--}\cM}^{\cB}(\cM,\cM),
\]
with composition as tensor product
\[
\rG\otimes \rH:=\rH\rG.
\]
\end{definition}
An object in $\cZ_{\cB}(\cM)$ is a morphism of $\cM$-bimodules $\phi\colon \cM\to \cM$ commuting, as in Equation (\ref{betacoh}), with the $\cB$-balancing isomorphisms. As the $\cB$-balancing isomorphisms on $\cM$ are given by the $\cB$-augmentation, cf. Example \ref{regularbalanced}, the requirement is equivalent to $\phi$ commuting the the $\cB$-augmentation, i.e.
\begin{align}\label{centeraugm}
\phi(\sigma_{M,V})=\lambda^{-1}_{\rT(V),M}\sigma_{\phi(M),V}\rho_{M,\rT(V)}, &&\forall M\in \cM, V\in \cB.
\end{align}

\begin{theorem}\label{centerismonoidal}
If $\cM$ be a $\cB$-augmented monoidal category in $\Cat^c$, then $\cZ_\cB(\cM)$ is a braided strict monoidal category in $\Cat^c$ together with a functor of monoidal categories $\rF\colon \cZ_\cB(\cM)\to \cM$. Moreover, if $\cM$ is rigid, then so is $\cZ_{\cB}(\cM)$.

\end{theorem}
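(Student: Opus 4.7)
The plan is to mimic the classical proof that the monoidal center is a braided monoidal category (as in \cites{Maj2,JS,EGNO}) while carrying along the additional $\cB$-balancing condition~(\ref{centeraugm}) throughout. First I would verify that the composition $\rH\rG$ of two $\cB$-balanced bimodule endofunctors is again $\cB$-balanced: the bimodule structure on the composite is given by Eqs.~(\ref{lambdacompose}) and (\ref{rhocompose}), and condition~(\ref{centeraugm}) for the composite follows by pasting the squares~(\ref{centeraugm}) for $\rG$ and $\rH$ and invoking the naturality of $\sigma$. The unit $\ide_\cM$ trivially satisfies~(\ref{centeraugm}); strict associativity and unitality are automatic from the strict composition of functors and the strict pasting of the structure isomorphisms $\lambda,\rho$. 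Since $\cZ_\cB(\cM)$ is $2$-full in $\Fun^c(\cM,\cM)$, and both the bimodule and $\cB$-balancing conditions are imposed by natural isomorphisms which are preserved under pointwise colimits, $\cZ_\cB(\cM)$ lies in $\Cat^c$. I would define $\rF\colon \cZ_\cB(\cM)\to\cM$ by $\rF(\rG):=\rG(\one)$, with coherence isomorphism
\[
\mu^\rF_{\rG,\rH}\;:=\;\lambda^\rH_{\rG(\one),\one}\colon \rH(\rG(\one))\longrightarrow \rG(\one)\otimes\rH(\one);
\]
its monoidality then reduces to the coherence diagram~(\ref{modulemorph1}) for $\lambda^\rH$.

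Second, for the braiding I would extract from each $\rG\in\cZ_\cB(\cM)$ the canonical half-braiding of $X:=\rG(\one)$, given by
\[
\gamma^\rG_N\;:=\;\lambda^\rG_{N,\one}\circ(\rho^\rG_{\one,N})^{-1}\colon X\otimes N\longrightarrow N\otimes X.
\]
The bimodule coherences~(\ref{bimodcoherence1})--(\ref{bimodcoherence2}) imply that $\gamma^\rG$ satisfies the hexagon axioms of a half-braiding in the sense of \cite{Maj2}, and a short computation from~(\ref{centeraugm}) at $M=\one$ yields $\gamma^\rG_{\rT(V)}=\sigma_{X,V}$, that is, the half-braiding restricts to the augmentation on $\rT(\cB)$. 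The braiding $c_{\rG,\rH}\colon \rH\rG\Rightarrow\rG\rH$ is then constructed objectwise by inserting $\gamma^\rG_{\rH(\one)}$ between the canonical bimodule decompositions of the two composites, exactly as in the classical case; naturality in $M\in\cM$ and both hexagon identities reduce to diagrammatic statements about $\gamma^\rG$ and $\gamma^\rH$ that follow from the bimodule coherences together with the $\sigma$-compatibility above. For rigidity, when $\cM$ is rigid, mating $\gamma^\rG$ along the evaluation and coevaluation of $X$ produces a half-braiding on $X^*$ whose $\sigma$-compatibility descends from~(\ref{centeraugm}) and the invertibility of $\sigma$; this determines a dual object $\rG^*\in\cZ_\cB(\cM)$, with evaluation and coevaluation lifted from those of $X$ in $\cM$.

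The main obstacle I anticipate is not any individual pentagon or hexagon, but rather checking that the braiding $c_{\rG,\rH}$ is itself a morphism in $\cZ_\cB(\cM)$, namely that it respects both the bimodule structures of $\rH\rG$ and $\rG\rH$ and the induced $\cB$-balancing on these composites. This is the one place where the relative augmentation genuinely interferes with the classical argument, and it is precisely the identity $\gamma^\rG_{\rT(V)}=\sigma_{X,V}$ forced by~(\ref{centeraugm}) that makes the required compatibility with $\sigma$ go through.
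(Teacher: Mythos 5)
Your proposal is correct and follows essentially the same route as the paper: the tensor product is strict composition of functors, the braiding is assembled from the structural isomorphisms $\lambda,\rho$ — equivalently from the half-braiding $\gamma^\rG_N=\lambda^\rG_{N,\one}(\rho^\rG_{\one,N})^{-1}$ on $\rG(\one)$, which is exactly the datum the paper extracts in Proposition \ref{alternativedescription} — the forgetful functor is $\rG\mapsto\rG(\one)$ with structure map essentially $\lambda^{\rH}_{\rG(\one),\one}$, the identity $\gamma^\rG_{\rT(V)}=\sigma_{\rG(\one),V}$ is obtained from (\ref{centeraugm}) exactly as you indicate, and rigidity is obtained by mating the half-braiding along $\ev$ and $\coev$ and verifying $\sigma$-compatibility, precisely as the paper does after Proposition \ref{alternativedescription}. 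One small correction: the tensor-compatibility (\ref{ctensorcomp}) of $\gamma^\rG$ does not follow from the bimodule coherences (\ref{bimodcoherence1})--(\ref{bimodcoherence2}), which are vacuous for the regular bimodule since $\chi,\xi,\zeta$ are identities in the strict setting, but from the coherences (\ref{modulemorph1})--(\ref{modulemorph3}) of $\rG$ as a bimodule \emph{morphism}; with that substitution your argument matches the paper's.
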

\begin{proof}
By definition, $\cZ_{\cB}(\cM)$ is a strict monoidal category with respect to composition of functors. Given $(\rG, \lambda^{\rG}, \rho^{\rG}), (\rH, \lambda^{\rH}, \rho^{\rH})\colon \cM\to \cM$, then $\rG\rH$ has structural isomorphisms $\lambda^{\rG\rH}_{M,V}=\lambda^{\rG}_{M,\rH(V)}\lambda^{\rH}_{M,V}$, and $\rho^{\rG\rH}_{V,M}=\lambda^{\rG}_{\rH(V),M}\lambda^{\rH}_{V,M}$. 
This monoidal category is braided, using the braiding $\Psi_{\rG,\rH,V}$   defined by the composition
\newline
\begin{minipage}{\linewidth}
\begin{small}
\begin{align*}
\rG\rH(V)=\rG\rH(\one \otimes V)\xrightarrow{\rG(\rho^{\rH}_{\one,V})}\rG(\rH(\one)\otimes V)\xrightarrow{\lambda^{\rG}_{\rH(\one),V}}\rH(\one)\otimes \rG(V)\xrightarrow{\left(\rho^{\rH}_{\one,\rG(V)}\right)^{-1}}\rH(\one\otimes \rG(V))= \rH\rG(V).\end{align*}
\end{small}
\end{minipage}
The hexagon axioms of a braided monoidal category, cf. \cite{EGNO}*{Definition 8.1.1}, follow by the definition of $\lambda^{\rG\rH}$ and $\rho^{\rG\rH}$ as above.
There is a monoidal functor $\rF\colon \cZ_{\cB}(\cM)\to \cM$ mapping $\rG$ to $\rG(\one)$ which is monoidal.
An isomorphism $\mu^{\rF}$ is given by the composition of natural isomorphisms 
\begin{align}\label{centermonoidaliso}
\rG(\one )\otimes \rH(\one)\xrightarrow{\left(\lambda^{\rH}_{\rG(\one),\one}\right)^{-1}}\rH(\rG(\one) \otimes \one)= \rH\rG(\one).
\end{align}
The naturality square as in \cite{EGNO}*{Eq. (2.23)} necessary to make $(\rF,\mu)$ a monoidal functor follows from Eq. (\ref{modulemorph2}).

We will prove that $\cZ_{\cB}(\cM)$ is rigid provided that  $\cM$ is rigid after Proposition \ref{alternativedescription}.
\end{proof}

\begin{example}$~$
\begin{enumerate}
\item If $\cB=\Vect$, and $\cM$ has countable biproducts, as in Example \ref{trivexample}, then $\cZ_{\cB}(\cM)=\cZ(\cM)$ is equivalent to the monoidal center from \cite{Maj2} (in the case of the identity functor).
\item If $\cM$ is a braided $\cB$-augmented monoidal category as in Lemma \ref{augmentedbraided}, then there exists a functor of $\cB$-augmented monoidal categories $\cM\to \cZ_\cB(\cM)$ which is a right inverse to the forgetful functor. It is defined by mapping $M$ to the functor $M\otimes (-)$ of left tensoring by $M$, which is in the center using the braiding of $\cM$.
\end{enumerate}
\end{example}

\begin{remark}
Note that the definition of the relative monoidal center in \cite{Lau} is a slightly different one, which is, in general, not equivalent. There, the goal was to use a  set of assumptions in order to recover the category $\lYD{B}(\cB)$ in the special case where $\cM=\lmod{B}(\cB)$, cf. \cite{Lau}*{Proposition 2.4.7}. Here, the language of $\cB$-balanced functors is used to give a more natural construction with additional properties, for which the Morita dual can be identified, see Section \ref{Moritasection}.
\end{remark}

We now give  an alternative description of the relative monoidal center analogous to the original definition of the monoidal center of \cites{Maj2,JS},.

\begin{definition}\label{Isombraideddef}
Define the category $\Isom_{\cB}(\cM\otimes \ide_{\cM},\ide_{\cM}\otimes \cM)$ to consist of objects which are pairs $(V,c)$ where $V$ is an object of $\cM$ and $c\colon V\otimes \ide_{\cM}\natisomorph \ide_{\cM}\otimes V$ is a natural isomorphism satisfying that
\begin{enumerate}
\item[(i)]  for two objects $M,N$ of $\cM$, the diagram
\begin{align}\label{ctensorcomp}
\vcenter{\hbox{
\xymatrix{
V\otimes M\otimes N\ar[dr]_{c_M\otimes \ide_N}\ar[rr]^{c_{M\otimes N}}&&M\otimes N\otimes V\\
&X\otimes V\otimes Y\ar[ur]_{\ide_M\otimes c_{N}}&
}}}
\end{align}
commutes (tensor product compatibility); 
\item[(ii)]  Any object $(V,c)$ satisfies that for any object $X$ of $\cB$, we have
\begin{align}\label{sigmacomp}
c_{\rT(C)}=\sigma_{V,C}, &&\text{(compatibility with augmentation)}.
\end{align}
\end{enumerate}
A morphisms $\phi\colon (V,c)\to (W,d)$ in $\Isom_{\cB}(\cM\otimes \ide_{\cM},\ide_{\cM}\otimes \cM)$ corresponds to a morphism $\phi\colon V\to W$ in $\cM$ such that the diagram
\begin{align}\label{Isomhoms}
\vcenter{\hbox{
\xymatrix{
V\otimes M\ar[r]^{c_M}\ar[d]_{\phi\,\otimes M}&M\otimes V\ar[d]^{X\otimes \phi}\\
W\otimes M\ar[r]^{d_M}&M\otimes W
}}}
\end{align}
commutes for any object $M$ of $\cM$.
\end{definition}

\begin{proposition}
The category $\Isom_{\cB}(\cM\otimes \ide_{\cM},\ide_{\cM}\otimes \cM)$ is a braided monoidal  category.
\end{proposition}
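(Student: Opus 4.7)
The plan is to mimic the classical construction of the monoidal structure and braiding on the Drinfeld center of a monoidal category (cf.\ \cite{JS,Maj2}), and then verify that the two new axioms (\ref{ctensorcomp}) and (\ref{sigmacomp}) are preserved by all the operations. Concretely, I will define the tensor product of $(V,c)$ and $(W,d)$ to be $(V\otimes W, c\boxdot d)$, where
\begin{align*}
(c\boxdot d)_M := (c_M\otimes \ide_W)(\ide_V\otimes d_M)\colon V\otimes W\otimes M\longrightarrow M\otimes V\otimes W,
\end{align*}
take the unit to be $(\one,\ide)$, take associativity and unit isomorphisms to be identities (using Assumption \ref{strict}), and define the braiding on generators by
\begin{align*}
\Psi_{(V,c),(W,d)} := c_W\colon V\otimes W\longrightarrow W\otimes V.
\end{align*}

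The routine checks are that $c\boxdot d$ is natural in $M$ and satisfies the tensor product compatibility (\ref{ctensorcomp}); both follow from (\ref{ctensorcomp}) for $c$ and $d$ together with the interchange law in $\cM$. The first genuinely new point is the compatibility with the augmentation (\ref{sigmacomp}) for $c\boxdot d$: evaluating at $\rT(X)$ and using (\ref{sigmacomp}) for $c$ and $d$ separately yields
\begin{align*}
(c\boxdot d)_{\rT(X)} = (\sigma_{V,X}\otimes \ide_W)(\ide_V\otimes \sigma_{W,X}),
\end{align*}
which coincides with $\sigma_{V\otimes W, X}$ precisely by diagram (\ref{augmenteddiag1}) of the $\cB$-augmentation. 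Functoriality of $\otimes$ on morphisms and strict associativity/unitality are then straightforward.

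The second new point is that $\Psi_{(V,c),(W,d)} = c_W$ is itself a morphism in $\Isom_{\cB}$, i.e.\ satisfies (\ref{Isomhoms}) from $(V\otimes W, c\boxdot d)$ to $(W\otimes V, d\boxdot c)$. Unwinding definitions, this reduces to
\begin{align*}
(d_M\otimes \ide_V)(\ide_W\otimes c_M)(c_W\otimes \ide_M) = (\ide_M\otimes c_W)(c_M\otimes \ide_W)(\ide_V\otimes d_M).
\end{align*}
Applying (\ref{ctensorcomp}) on each side identifies the left-hand side with $(d_M\otimes \ide_V)c_{W\otimes M}$ and the right-hand side with $c_{M\otimes W}(\ide_V\otimes d_M)$, which then agree by naturality of $c$ applied to the morphism $d_M\colon W\otimes M\to M\otimes W$. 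I expect this to be the main, though still short, calculation.

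Finally, the two hexagon axioms reduce directly to the identity
$c_{W\otimes W'} = (\ide_W\otimes c_{W'})(c_W\otimes \ide_{W'})$,
i.e.\ exactly (\ref{ctensorcomp}) applied to $c$, together with the dual identity for $d$, so they hold automatically. Together with the fact that braiding is natural in both variables (again by naturality of $c$), this establishes the braided monoidal structure. Note that condition (\ref{sigmacomp}) need only be verified on objects, not on the braiding, since the braiding is a morphism, not a member of the data of an object of $\Isom_\cB$.
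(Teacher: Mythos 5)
Your proposal is correct and follows essentially the same route as the paper: the tensor product $(c\boxdot d)_M=(c_M\otimes\ide_W)(\ide_V\otimes d_M)$ and braiding $c_W$ are the standard center structure of \cite{Maj2,JS}, the closure of condition (\ref{sigmacomp}) under tensor products is checked via diagram (\ref{augmenteddiag1}), and the braiding is verified to be a morphism using (\ref{ctensorcomp}) together with naturality of $c$ applied to $d_M$ --- exactly the points the paper isolates. The only cosmetic difference is that the paper shortcuts the remaining verifications by citing that $\Isom_{\cB}$ is a full subcategory of the ordinary center $\Isom(\cM\otimes\ide_{\cM},\ide_{\cM}\otimes\cM)$, whereas you spell them out.
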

\begin{proof} It follows directly that $\Isom_{\cB}(\cM\otimes \ide_{\cM},\ide_{\cM}\otimes \cM)$ is a full monoidal subcategory of the category $\Isom(\cM\otimes \ide_{\cM},\ide_{\cM}\otimes \cM)$, which is the monoidal center of \cite{Maj2}, as the compatibility of Eq. (\ref{sigmacomp}) with the augmentation is stable under composition and tensor products. 

We recall that the tensor product of $(V, c^V)$ and $(W, c^W)$ is defined as $(V\otimes W, c^{V\otimes W})$, where $c^{V\otimes W}$ is defined to make the diagram
\begin{align}\label{Isomtensor}
\vcenter{\hbox{
\xymatrix{
V\otimes W\otimes X\ar[dr]_{\ide\otimes c_X^W}\ar[rr]^{c^{V\otimes W}_{X}}&&X\otimes V\otimes W\\
&V\otimes X\otimes W\ar[ru]_{c_X^V\otimes \ide}&
}}}
\end{align} 
commute for any object $X$ of $\cM$. Note that Equation (\ref{sigmacomp}) for $(V\otimes W,c^{V\otimes W})$ translates to diagram in Eq. (\ref{augmenteddiag1}), which holds as $\cM$ is $\cB$-augmented monoidal.

Also recall that a braiding $\Psi_{(V,c^V), (W,c^W)}\colon (V\otimes W,c^{V\otimes W})\isomorph (W\otimes V,c^{W\otimes V})$ is defined by
$\Psi_{(V,c^V), (W,c^W)}=c^V_W.$ This is an isomorphism in $\Isom_{\cB}(\cM\otimes \ide_{\cM},\ide_{\cM}\otimes \cM)$ using $\otimes$-compatibility from Eq. (\ref{ctensorcomp}) and naturality of $c^V$ applied to $c^W_{X}$, for any object $X$ in $\cM$.
The braiding axioms also follow from Eq. (\ref{ctensorcomp}).
\end{proof}

\begin{proposition}\label{alternativedescription}
There is an equivalence of braided monoidal categories between $\cZ_{\cB}(\cM)$ and $\Isom_{\cB}(\cM\otimes \ide_{\cM},\ide_{\cM}\otimes \cM)$.
\end{proposition}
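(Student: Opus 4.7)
The plan is to construct mutually quasi-inverse functors of braided monoidal categories,
\[
F\colon \cZ_{\cB}(\cM) \rightleftarrows \Isom_{\cB}(\cM\otimes \ide_{\cM}, \ide_{\cM}\otimes \cM)\colon G,
\]
extending the classical correspondence between the monoidal center and half-braidings from \cite{Maj2,JS} to the augmented setting. The central new verification is that the relative center condition of Eq.~(\ref{centeraugm}) corresponds precisely to the augmentation compatibility of Eq.~(\ref{sigmacomp}).

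First, I would define $F(\rG, \lambda, \rho) := (V, c)$ with $V := \rG(\one)$ and
\[
c_M := \lambda_{M, \one} \circ \rho_{\one, M}^{-1}\colon \rG(\one)\otimes M \xrightarrow{\rho^{-1}_{\one,M}} \rG(M) \xrightarrow{\lambda_{M, \one}} M\otimes \rG(\one),
\]
with $F(\phi) := \phi_{\one}$ on $2$-morphisms. Naturality of $c$ is immediate. Tensor-product compatibility (Eq.~(\ref{ctensorcomp})) follows from the bimodule-morphism coherence Eq.~(\ref{modulemorph3}), which for the regular bimodule in strict form reduces to the interchange $(\lambda_{M,\one}\otimes \ide_N)\,\rho_{M,N} = (\ide_M\otimes \rho_{\one,N})\,\lambda_{M,N}$; combined with Eqs.~(\ref{modulemorph1})--(\ref{modulemorph2}) specialised at $V = \one$, this yields $c_{M\otimes N} = (\ide_M\otimes c_N)(c_M\otimes \ide_N)$. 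The augmentation compatibility Eq.~(\ref{sigmacomp}) falls out of Eq.~(\ref{centeraugm}) at $M = \one$, using $\sigma_{\one, V} = \ide$ and the normalisations $\lambda_{\one, V} = \rho_{V, \one} = \ide$, which collapses the equation to $c_{\rT(X)} = \sigma_{V, X}$. The morphism axiom Eq.~(\ref{Isomhoms}) for $\phi_\one$ is the specialisation of Eqs.~(\ref{2morphism})--(\ref{2morphism2}) at $V = \one$.

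Conversely, define $G(V, c)(M) := V \otimes M$ with $\lambda_{N, M} := c_N \otimes \ide_M$, $\rho_{M, N} := \ide$, and $G(\phi)_M := \phi \otimes \ide_M$. The bimodule axioms (\ref{modulecoherence})--(\ref{modulecoherence2}) and (\ref{bimodcoherence1})--(\ref{bimodcoherence2}), and the morphism axioms (\ref{modulemorph1})--(\ref{modulemorph3}) and (\ref{2morphism})--(\ref{2morphism2}), reduce by direct inspection to Eqs.~(\ref{ctensorcomp}) and (\ref{Isomhoms}). The crucial step is the center condition Eq.~(\ref{centeraugm}): here $\lambda_{\rT(X), M} = c_{\rT(X)}\otimes \ide_M = \sigma_{V, X}\otimes \ide_M$ by Eq.~(\ref{sigmacomp}), and Eq.~(\ref{augmenteddiag1}) gives $\sigma_{V\otimes M, X} = (\sigma_{V, X}\otimes \ide_M)(\ide_V\otimes \sigma_{M, X})$, so the RHS of Eq.~(\ref{centeraugm}) telescopes to $\ide_V\otimes \sigma_{M, X} = \rG(\sigma_{M, X})$, as required. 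The isomorphism $GF \natisomorph \ide$ is provided by $\rho^{-1}_{\one, M}\colon \rG(\one)\otimes M\to \rG(M)$, compatible with the augmented bimodule structure by the same coherences, while $FG = \ide$ holds on the nose.

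For the braided monoidal structure, the isomorphism $\mu^F_{\rG, \rH} := (\lambda^{\rH}_{\rG(\one), \one})^{-1}$ from Eq.~(\ref{centermonoidaliso}) identifies $F(\rG\otimes_\cZ \rH) = (\rH\rG(\one), c^{\rH\rG})$ with $F(\rG)\otimes F(\rH) = (V\otimes W, c^{V\otimes W})$, and uniqueness of the half-braiding pinned down by Eq.~(\ref{Isomtensor}) ensures the two structures agree. For the braiding, the formula for $\Psi_{\rG, \rH, V}$ from the proof of Theorem~\ref{centerismonoidal}, evaluated at $V = \one$ and simplified via $\lambda^{\rG}_{B, \one} = c^{\rG}_B\,\rho^{\rG}_{\one, B}$, yields precisely $c^{\rG}_B$ once composed with the monoidal isomorphisms, matching the $\Isom_\cB$-braiding. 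This equivalence also settles the rigidity claim deferred in Theorem~\ref{centerismonoidal}: when $\cM$ is rigid, a dual half-braiding on $V^*$ is constructed as in \cite{EGNO}*{Proposition 8.6.3} and automatically satisfies Eq.~(\ref{sigmacomp}) because $\sigma$ is preserved by duality in the $\cB$-augmented structure. The main technical obstacle is the bookkeeping of strict-monoidal normalisations and coherences, ensuring that the two conditions distinguishing the relative from the classical center, Eqs.~(\ref{centeraugm}) and (\ref{sigmacomp}), correspond under $F$ and $G$ without spurious twists.
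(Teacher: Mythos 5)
Your proposal is correct and follows essentially the same route as the paper: both send $\rG$ to $(\rG(\one),\,\lambda_{M,\one}\rho^{-1}_{\one,M})$, identify the relative-center condition of Eq.~(\ref{centeraugm}) with the augmentation compatibility of Eq.~(\ref{sigmacomp}) via Eq.~(\ref{augmenteddiag1}), and transport the monoidal/braided structure through the isomorphism of Eq.~(\ref{centermonoidaliso}). The only (immaterial) difference is that your quasi-inverse tensors on the left, $M\mapsto V\otimes M$ with $\rho=\ide$ and $\lambda=c\otimes\ide$, whereas the paper uses $M\mapsto M\otimes V$ with $\lambda=\ide$ and $\rho=\ide\otimes c^{-1}$, which meshes slightly more directly with the convention $\rG\otimes\rH:=\rH\rG$.
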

\begin{proof}
The equivalence can be obtained by restriction of the known equivalence of $\cZ(\cM)\simeq \Isom(\cM\otimes \ide_{\cM},\ide_{\cM}\otimes \cM)$, cf. e.g.  \cite{EGNO}*{Proposition 7.13.8}, to the subcategories $\cZ_{\cB}(\cM)$ and $\Isom_{\cB}(\cM\otimes \ide_{\cM},\ide_{\cM}\otimes \cM)$. We provide a sketch of the functors constituting the equivalence here:

Given an object $\phi\colon \cM \to \cM$ in $\cZ_{\cB}(\cM)$, consider the pair $(V_\phi,c^\phi)$, where $V_\phi=\phi(\one)$ and $c^\phi_X$, for an object $X$ of $\cM$, is given by the composition
\begin{align*}
c^\phi_M\colon  V_\phi\otimes M\xrightarrow{\rho^{-1}_{\one,M}} \phi(\one\otimes M)= \phi(M \otimes \one)\xrightarrow{\lambda_{M,\one}}M\otimes V_\phi.
\end{align*}
We have to check that $c^\phi$ thus defined satisfies the tensor compatibility condition from Eq. (\ref{ctensorcomp}). This follows from the diagrams in Eqs. (\ref{modulemorph1}), (\ref{modulemorph2}), and (\ref{modulemorph3}), which, as the isomorphisms $\chi, \xi,\zeta$ are identities for the regular bimodule (in the strict case), make the following diagram commute:
\begin{align*}
\vcenter{\hbox{
\xymatrix{
V_\phi\otimes M\otimes N\ar[rr]^{\rho^{-1}_{\one,M\otimes N}}\ar[d]_{\rho^{-1}_{\one,M}\otimes N}&&\phi(M\otimes N)\ar[rr]^{\lambda_{M\otimes N,\one}}\ar[rrd]_{\lambda_{M,N}}&& M\otimes N\otimes V_\phi\\
\phi(M)\otimes N\ar[rru]_{\rho^{-1}_{M,N}}\ar[rrd]_{\lambda_{M,\one}\otimes N}&&&&M\otimes \phi(N)\ar[u]_{M\otimes \lambda_{N,\one}}\\
&&M\otimes V_\phi\otimes N\ar[rru]_{M\otimes \rho^{-1}_{\one,N}}&&
}}}
\end{align*}
Further, Eq. (\ref{centeraugm}) implies that
\begin{align*}
\sigma_{\phi(\one),X}=\lambda_{\rT (X),\one}\rho^{-1}_{\one, \rT(X)}=c^\phi_{\rT(X)}, &&\forall X\in\cB.
\end{align*}
That is, the compatibility conditions of Eq. (\ref{sigmacomp}) hold.
This constructions extends to a $\Bbbk$-linear functor $\Upsilon$ from $\cZ_{\cB}(\cM)$ to $\Isom_{\cB}(\cM\otimes \ide_{\cM},\ide_{\cM}\otimes \cM)$ by sending a morphism $\theta\colon \phi\Rightarrow \psi$ to $\Upsilon(\theta)=\theta_{\one}$. 

It follows that $\Upsilon$ is a functor of monoidal categories, using the same structural isomorphism $\mu$ as in Eq. (\ref{centermonoidaliso}). By construction, this monoidal functor commutes with the braiding.

Conversely, given an object $(V,c)$ in $\Isom_{\cB}(\cM\otimes \ide_{\cM},\ide_{\cM}\otimes \cM)$, we construct a  morphism $\phi^{V}\colon \cM\to \cM$ of $\cB$-balanced $\cM$-bimodules by setting $\phi^V(M)=M\otimes V$. The structural isomorphisms needed for $\phi^V$ to be morphism  of $\cM$-bimodules are given by 
$\lambda=\ide$ and $\rho_{M,N}=M\otimes c_{N}^{-1}.$ It follows, by construction, that the functor $\phi^V$ is a morphism of $\cB$-balanced bimodules. In particular, it satisfies Eq. (\ref{centeraugm}): 
\begin{align*}
\sigma_{\phi^V(M),X}&=\sigma_{M\otimes V,B}\stackrel{\text{(\ref{augmenteddiag1})}}{=}(\sigma_{M,X}\otimes V)(M\otimes \sigma_{V,X})\stackrel{\text{(\ref{sigmacomp})}}{=}(\sigma_{M,X}\otimes V)(M\otimes c_{\rT(X)})\\
&=(\sigma_{M,X}\otimes V)\rho^{-1}_{M,\rT(X)}=\lambda^{-1}_{\rT(X),M}\phi^V(\sigma_{M,X})\rho^{-1}_{M,\rT(X)}.
\end{align*}
The assignment $(V,c)\mapsto \Phi(V,c):=\phi^V$ extends to a functor $\Phi$ by setting $\Phi(\alpha)_M:=M\otimes \alpha$, which gives a $2$-morphism of $\cB$-balanced bimodules. 

A direct verification shows that the functors $\Upsilon$ and $\Phi$ form an equivalence of categories as claimed.
\end{proof}

We can now prove that $\cZ_{\cB}(\cM)$ is rigid (i.e. has left duals, see e.g \cite{Maj1}*{Definition 9.3.1}), provided that $\cM$ is rigid. Equivalently, we can prove that $\Isom_{\cB}(\cM\otimes \ide_{\cM},\ide_{\cM}\otimes \cM)$ is rigid. Given an object $(V,c)$, the left dual is defined as $(V^*,c^*)$, where
\begin{align}
c^*_M=(\ev_{V}\otimes \ide_{M\otimes V^*})(\ide\otimes c_M^{-1}\otimes \ide)(\ide_{V^*\otimes M}\otimes \coev_V).
\end{align}
Here, $\coev_V\colon \Bbbk \to V\otimes V^*$, $\ev_V\colon V^*\otimes V\to \Bbbk$ are the structural maps making $V^*$ the left dual of $V$ (cf. \cite{Maj1}*{Definition 9.3.1}).
It follows that $(V^*,c^*)$ defines an object in $\Isom(\cM\otimes \ide_{\cM},\ide_{\cM}\otimes \cM)$, and we have to verify Eq. (\ref{sigmacomp}).
If $M=\rT(X)$, then $c_{\rT(X)}^{-1}=\sigma_{V,X}^{-1}$ and we have to show that $c^*_{\rT(X)}=\sigma_{V^*,X}$. This follows, using uniqueness of the inverse, from 
\begin{align*}
c_{\rT(X)}^*\sigma_{V^*,X}^{-1}&=\ide_{V^*\otimes \rT(X)}, & \sigma_{V^*,X}^{-1}c^*_{\rT(X)}&=\ide_{\rT(X)\otimes V^*},
\end{align*}
which is derived from Eq. (\ref{augmenteddiag1}) and naturality of $\sigma_{-,X}$ applied to $\ev_V$ and $\coev_V$.

\begin{example}\label{YDexample}
Let $B\in \BiAlg(\cB)$. We define the category $\lYD{B}(\cB)$ of \emph{Yetter--Drinfeld modules} over $B$ as having objects $V\in \lmod{B}(\cB)$ which, in addition, have a left $B$-comodule structure in $\cB$, such that the compatibility
\begin{equation}\label{YDcond}
\begin{split}
&(m_B\otimes a)(\ide_B\otimes \Psi_{B,B}\otimes \ide_V)(\Delta\otimes \delta)\\&=(m_B\otimes \ide_V)(\ide_B\otimes \Psi_{V,B})(\delta a\otimes \ide_V)(\ide_B\otimes \Psi_{B,V})(\Delta\otimes \ide_V).
\end{split}
\end{equation}
holds. In graphical calculus notation, that is,
\begin{equation}\label{ydpic}\vcenter{\hbox{
\begingroup%
  \makeatletter%
  \providecommand\color[2][]{%
    \errmessage{(Inkscape) Color is used for the text in Inkscape, but the package 'color.sty' is not loaded}%
    \renewcommand\color[2][]{}%
  }%
  \providecommand\transparent[1]{%
    \errmessage{(Inkscape) Transparency is used (non-zero) for the text in Inkscape, but the package 'transparent.sty' is not loaded}%
    \renewcommand\transparent[1]{}%
  }%
  \providecommand\rotatebox[2]{#2}%
  \ifx\svgwidth\undefined%
    \setlength{\unitlength}{83.4452175bp}%
    \ifx\svgscale\undefined%
      \relax%
    \else%
      \setlength{\unitlength}{\unitlength * \real{\svgscale}}%
    \fi%
  \else%
    \setlength{\unitlength}{\svgwidth}%
  \fi%
  \global\let\svgwidth\undefined%
  \global\let\svgscale\undefined%
  \makeatother%
  \begin{picture}(1,0.54781815)%
    \put(0,0.27579781){\color[rgb]{0,0,0}\makebox(0,0)[lb]{\smash{ }}}%
    \put(0,0){\includegraphics[width=\unitlength,page=1]{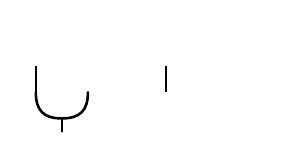}}%
    \put(0.64422368,0.25081165){\color[rgb]{0,0,0}\makebox(0,0)[lb]{\smash{$=$}}}%
    \put(0,0){\includegraphics[width=\unitlength,page=2]{ydcond.pdf}}%
    \put(0.44325035,0.26670016){\color[rgb]{0,0,0}\makebox(0,0)[lb]{\smash{$~$\\  }}}%
    \put(0,0){\includegraphics[width=\unitlength,page=3]{ydcond.pdf}}%
  \end{picture}%
\endgroup%
}}~.\end{equation}
Morphisms in  $\lYD{B}(\cB)$ are those commuting with both the left $B$-action and left $B$-coaction.
The category $\lYD{B}(\cB)$ is braided monoidal (see \cite{BD}*{Section 4} or \cite{Lau2}*{Section 2.1} for details). 
 \end{example}

For a Hopf algebra $H$ in $\cB$, a partial dual was defined and the category $\lYD{H}(\cB)$ was shown to be equivalent to YD modules over its partial dual in \cite{BLS}.

\begin{proposition}\label{YDprop}
Let $\cM$ be the $\cB$-augmented monoidal category $\cM=\lmod{B}(\cB)$ from Example \ref{bialgebraexpl}. Then there is an equivalence of braided monoidal categories
$$\cZ_{\cB}(\cM)\simeq \lYD{B}(\cB).$$
\end{proposition}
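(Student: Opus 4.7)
The plan is to exploit the alternative description of Proposition~\ref{alternativedescription}, giving $\cZ_{\cB}(\cM) \simeq \Isom_{\cB}(\cM \otimes \ide_{\cM}, \ide_{\cM} \otimes \cM)$, and then build a braided monoidal equivalence between $\Isom_{\cB}$ and $\lYD{B}(\cB)$ by constructing explicit functors in both directions. This upgrades the classical equivalence $\cZ(\lmod{B}) \simeq \lYD{B}$ to the relative setting, with the $\sigma$-compatibility (\ref{sigmacomp}) encoding exactly the fact that the half-braiding reduces to the ambient braiding $\Psi$ on trivial modules $\rT(X)$.

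First I would define $\Phi \colon \lYD{B}(\cB) \to \Isom_{\cB}(\cM \otimes \ide_{\cM}, \ide_{\cM} \otimes \cM)$ sending a YD module $(V, a_V, \delta_V)$ to $(V, c^V)$, where
\[
c^V_M := (a_M \otimes \ide_V)(\ide_B \otimes \Psi_{V,M})(\delta_V \otimes \ide_M) \colon V \otimes M \longrightarrow M \otimes V.
\]
Using the graphical calculus of Example~\ref{bialgebraexpl}, I would verify: (i) $c^V_M$ is a morphism of left $B$-modules, which is a direct diagrammatic reformulation of the YD condition (\ref{ydpic}); (ii) the tensor compatibility (\ref{ctensorcomp}) holds by coassociativity of $\delta_V$ combined with naturality and the hexagon axioms for $\Psi$; (iii) the $\sigma$-compatibility (\ref{sigmacomp}) holds because for $M = \rT(X)$ one has $a_M = \varepsilon \otimes \ide_X$, so the counit axiom for $\delta_V$ collapses $c^V_{\rT(X)}$ to $\Psi_{V,X}$, matching $\sigma_{V,X} = \rT(\Psi_{V,X})$. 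Morphisms of YD modules commute with both structural maps, hence yield morphisms in $\Isom_{\cB}$ in the sense of (\ref{Isomhoms}).

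Conversely, given $(V, c)$ in $\Isom_{\cB}$, I would extract the coaction by
\[
\delta_V := c_B (\ide_V \otimes 1_B) \colon V \longrightarrow B \otimes V,
\]
where $B$ carries its regular left $B$-module structure. The counit axiom follows from naturality of $c$ at the $B$-module map $\varepsilon \colon B \to \rT(\one) = \one$ together with the normalization $\sigma_{V,\one} = \ide$. Coassociativity rests on the crucial observation that $\Delta \colon B \to B \otimes B$ is itself a left $B$-module map, since in the tensor module structure on $B \otimes B$ one has $b \cdot (1_B \otimes 1_B) = \Delta(b)$ as a direct consequence of (\ref{bialgebraaxiom}); then naturality of $c$ at $\Delta$, combined with the tensor compatibility (\ref{ctensorcomp}) at $M = N = B$, yields $(\Delta \otimes \ide_V)\delta_V = (\ide_B \otimes \delta_V)\delta_V$. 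The YD condition (\ref{YDcond}) is equivalent to $c_B$ being $B$-linear, which is the reverse of step (i). To check that $\Phi$ is quasi-inverse, one uses naturality of $\Psi$ at $1_B$ to see that $c^V_B(\ide_V \otimes 1_B) = \delta_V$ for one composition; the other composition recovers $c_M$ from $c_B$ via naturality of $c$ at the $B$-linear action map $a_M \colon B \otimes M \to M$, presenting any $M \in \lmod{B}(\cB)$ as a quotient of a free module.

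Finally, the braided structure is preserved: on $\Isom_{\cB}$ the braiding is $\Psi_{(V,c^V),(W,c^W)} = c^V_W$, which under $\Phi$ matches the standard YD braiding of \cite{BD} on $\lYD{B}(\cB)$, given by the very formula defining $c^V_W$. The main obstacle is step (i): checking in graphical calculus that the braided YD condition (\ref{ydpic}) is precisely what is required for $c^V_M$ to intertwine the tensor $B$-module structures on $V \otimes M$ and $M \otimes V$. This amounts to a sequence of applications of the hexagon axiom for $\Psi$ together with the bialgebra axioms (\ref{bialgebraaxiom1})--(\ref{bialgebraaxiomlast}), but becomes transparent in the diagrammatic notation of Example~\ref{bialgebraexpl}.
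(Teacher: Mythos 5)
Your proposal is correct and follows essentially the same route as the paper: both pass through the alternative description $\Isom_{\cB}(\cM\otimes \ide_{\cM},\ide_{\cM}\otimes \cM)$ of Proposition \ref{alternativedescription}, extract the coaction as $\delta = c_B(\ide_V\otimes 1)$, recover the half-braiding from the coaction via naturality at the $B$-module maps $m_B\colon B\otimes B^{\triv}\to B$ and $\triangleright\colon B\otimes W^{\triv}\to W$, and identify the braiding with the standard Yetter--Drinfeld braiding. The only (cosmetic) difference is that you build the functor out of $\lYD{B}(\cB)$ first, whereas the paper constructs the functor out of $\Isom_{\cB}$ first and sketches the inverse.
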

\begin{proof}
We will construct an equivalence of braided monoidal categories 
$$\Phi\colon \Isom_{\cB}(\cM\otimes \ide_{\cM},\ide_{\cM}\otimes \cM)\isomorph \lYD{B}(\cB).$$
Given an object $(V,c)$, define $\Phi(V,c)=V$ with its $B$-module structure $\triangleright \colon B\otimes V\to V$. The morphism
$$\delta_c:=c_{B}(\ide_V\otimes 1)\colon V\to B\otimes V,$$
where $B\in \lmod{B}(\cB)$ has the regular $B$-module structure, defines a $B$-coaction on $V$. Indeed,
\begin{align*}
(\ide_B\otimes \delta_c)\delta_c=c_{B\otimes B}(\ide_V\otimes 1\otimes 1)=c_{B\otimes B}(\ide_V\otimes \Delta_B 1)=(\Delta_B\otimes \ide_V)\delta_c,
\end{align*}
where the last equality uses that $\Delta_B$ is a morphism of $B$-modules, which holds by the bialgebra axiom from Eq. (\ref{bialgebraaxiom}).
Moreover, $V$ becomes a Yetter--Drinfeld module in $\cB$. This follows from $c_B$ being a morphism of $B$-modules, combined with the fact that the product map $m_B\colon B\otimes B^{\triv}\to B$ is a morphism of left $B$-modules:
\begin{align*}
&(m_B\otimes \triangleright)(\ide_B\otimes \Psi_{B,B}\otimes \ide_V)(\Delta_B\otimes \delta_c)\\&= c_B(\triangleright\otimes m_B)(\ide_B\otimes \Psi_{B,V}\otimes \ide_B)(\Delta_B\otimes \ide_V\otimes 1)\\
&= c_B(\triangleright\otimes m_B)(\ide_B\otimes \Psi_{B,V}\otimes 1)(\Delta_B\otimes \ide_V)\\
&=(m_B\otimes \ide_V)(c_{B\otimes B^{\triv}}\otimes \ide_B)((\ide\otimes 1)\triangleright\otimes \ide_V)(\ide_B\otimes \Psi_{B,V})(\Delta_B\otimes \ide_V)\\
&=(m_B\otimes \ide_V)(\ide_B\otimes c_{B^{\triv}})(\delta_c\otimes \ide_B)(\triangleright\otimes \ide_V)(\ide_B\otimes \Psi_{B,V})(\Delta_B\otimes \ide_V)\\
&=(m_B\otimes \ide_V)(\ide_B\otimes \Psi_{B,V})(\delta_c\otimes \ide_B)(\triangleright\otimes \ide_V)(\ide_B\otimes \Psi_{B,V})(\Delta_B\otimes \ide_V).
\end{align*}
The last equality uses that $\lmod{B}(\cB)$ is a $\cB$-augmented braided monoidal category as in Example \ref{bialgebraexpl}, with augmentation $c_{B^{\triv}}=\sigma_{V,B}=\Psi_{V,B}$.

A morphism in $\Isom_{\cB}(\cM\otimes \ide_{\cM},\ide_{\cM}\otimes \cM)$, in particular, commutes with the $B$-action and $\lambda_{B}$, and is hence a morphism in $\lYD{B}(\cB)$. Thus, we obtain a functor 
$\Phi$ as claimed. For two objects $(V,c)$ and $(W,d)$ in $\Isom_{\cB}(\cM\otimes \ide_{\cM},\ide_{\cM}\otimes \cM)$, the calculation 
\begin{align*}
\delta_{c\,\otimes d}&\stackrel{(\ref{Isomtensor})}{=}(c_{B} \otimes\ide_W)(\ide_V\otimes d_B)(\ide_{V\otimes W}\otimes 1)\\
&~~=~~ (c_{B} \otimes\ide_W)(\ide_V\otimes m_B\otimes\ide_W)(\ide_V\otimes 1\otimes d_B)(\ide_{V\otimes W}\otimes 1)\\
&~~=~~ (m_B\otimes \ide_{V\otimes W})(\ide_B\otimes \Psi_{V,B}\otimes \ide_W)(\delta_c\otimes \delta_d)
\end{align*}
shows that the functor $\Phi$ is monoidal. Here, we again use that the map $m_B\colon B\otimes B^{\triv}\to B$ is a morphism of left $B$-modules. Moreover, it follows that
\begin{align*}
\Psi_{(V,c),(W,d)}=c_W&=c_W(\ide_W\otimes \,\triangleright)(\ide_W\otimes 1\otimes \ide_V)\\
&=(\triangleright \otimes \ide_V)(\ide_V\otimes {c_{W^{\triv}}})(\delta_c\otimes\ide_W)\\
&=(\triangleright \otimes \ide_V)(\ide_V\otimes \Psi_{V,W})(\delta_c\otimes\ide_W)=\Psi^{\mathbf{YD}}_{V,W},
\end{align*}
using that $\triangleright\colon B\otimes W^{\triv}\to W$ is a morphism of left $B$-modules in $\cB$ and applying naturality to it.
Hence, $\Phi$ is a functor of braided monoidal categories. 

An inverse to $\Phi$ can be constructed using the last calculation which is valid even if $W$ is any $B$-module --- not necessarily coming from an object $(W,d)$. It shows that $c_W$ can be recovered from $\delta_c$. Hence, we obtain an equivalence of categories as stated.
\end{proof}

\begin{example}[\cite{BV}*{Prop. 2.13}]Let $\cC$ be a monoidal category and $H\in \Hopf(\cZ(\cC))$. Then one can define a monoidal structure on $\lmod{H}(\cC)$ using the half-braiding of $H$. There is an equivalence of braided monoidal categories between $\lYD{H}(\cZ(\cC))$ and $\cZ(\lmod{H}(\cC))$. 
Using Proposition \ref{YDprop} this implies that $\cZ_{\cZ(\cC)}(\lmod{H}(\cZ(\cC)))$ is equivalent to $\cZ(\lmod{H}(\cC))$.
\end{example}

\begin{lemma}
Let $B$ be a bialgebra in $\cB$ and consider the $\cB$-augmented monoidal category $\cM=\rcomod{B}(\cB)$ as in Example \ref{comoduleaugmented}. Then there is an equivalence of $\Bbbk$-linear braided monoidal categories $\cZ_{\cB}(\cM)\simeq \rYD{B}(\cB)$.
\end{lemma}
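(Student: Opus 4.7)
The plan is to dualize the argument of Proposition~\ref{YDprop}, swapping the roles of $\Delta\leftrightarrow m_B$ and $1\leftrightarrow \varepsilon$ as one passes from left modules to right comodules. By Proposition~\ref{alternativedescription}, it suffices to construct an equivalence of braided monoidal categories
$$\Phi\colon \Isom_{\cB}(\cM\otimes \ide_{\cM},\ide_{\cM}\otimes \cM)\isomorph \rYD{B}(\cB).$$
Given $(V,c)$ with $V\in \rcomod{B}(\cB)$ of right coaction $\rho_V$, I would keep $\rho_V$ and define a right $B$-action
$$a_c := (\varepsilon\otimes \ide_V)\,c_B\colon V\otimes B\longrightarrow V,$$
where $B\in\rcomod{B}(\cB)$ carries the regular coaction $\Delta$.

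Associativity of $a_c$ follows from three observations dual to the ones used for Proposition~\ref{YDprop}: first, the bialgebra axiom~\eqref{bialgebraaxiom} says exactly that $m_B\colon B\otimes B\to B$ is a morphism of right $B$-comodules from the tensor coaction to the regular one, so naturality of $c$ gives $c_B(\ide_V\otimes m_B)=(m_B\otimes \ide_V)c_{B\otimes B}$; second, tensor product compatibility~\eqref{ctensorcomp} factors $c_{B\otimes B}=(\ide_B\otimes c_B)(c_B\otimes \ide_B)$; third, the bialgebra identity $\varepsilon m_B=\varepsilon\otimes \varepsilon$ combined with the interchange law in $\cB$ (to slide $\varepsilon$ past $c_B$) equates $a_c(\ide_V\otimes m_B)$ with $a_c(a_c\otimes \ide_B)$. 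Unitality is immediate because $1\colon I\to B$ is a right comodule map (as $\Delta\,1=1\otimes 1$), so by naturality $c_B(\ide_V\otimes 1)=(1\otimes \ide_V)c_I=1\otimes \ide_V$ under strict unitality, giving $a_c(\ide_V\otimes 1)=(\varepsilon\,1)\otimes \ide_V=\ide_V$. The right Yetter--Drinfeld compatibility for $(V,\rho_V,a_c)$ is then obtained by repeating the same pattern using that $\rho_V\colon V\to V\otimes B$ is itself a comodule map, and invoking the augmentation identity~\eqref{sigmacomp}, which for $\cM=\rcomod{B}(\cB)$ reads $c_{\rT(X)}=\sigma_{V,X}=\Psi_{V,X}$ by Example~\ref{comoduleaugmented}.

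For the monoidal and braided structure, the tensor product $(V\otimes W, c^{V\otimes W})$ defined by~\eqref{Isomtensor} together with the fact that $m_B\colon B\otimes B^{\triv}\to B$ is a comodule map (the trivial coaction on $B^{\triv}=\rT(B)$ coming from the augmentation) yields $a_{c^{V\otimes W}}$ equal to the tensor action in $\rYD{B}(\cB)$; the braiding $\Psi_{(V,c^V),(W,c^W)}=c^V_W$ is identified with the right YD braiding by the same computation as at the end of the proof of Proposition~\ref{YDprop}. An inverse functor sends $(V,a,\rho)\in\rYD{B}(\cB)$ to the pair $(V,c^{a,\rho})$ with
$$c^{a,\rho}_M := (\ide_M\otimes a)(\Psi_{V,M}\otimes \ide_B)(\ide_V\otimes \rho_M)\colon V\otimes M\longrightarrow M\otimes V;$$
naturality in $M\in\rcomod{B}(\cB)$ and~\eqref{ctensorcomp} follow from the comodule axioms, while~\eqref{sigmacomp} reduces to $\Psi_{V,\rT(X)}=\sigma_{V,X}$ because $\rho_{\rT(X)}=\ide\otimes 1$ and $a(\ide_V\otimes 1)=\ide_V$. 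That $\Phi$ and this assignment are mutually inverse is proved as in Proposition~\ref{YDprop}: for any $M\in\cM$, the morphism $c_M$ can be reconstructed from $a_c$ and $\rho_V$ by applying $c$ to the tensor coaction structure on $M$.

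The main obstacle, as for Proposition~\ref{YDprop}, is tracking the braiding conventions in the right YD compatibility: the braiding $\Psi$ (or $\Psi^{-1}$) enters once from the tensor coaction on $V\otimes B$, once from the augmentation identity~\eqref{sigmacomp}, and once from the YD condition itself, so one must take care that all signs of braiding align with the right-handed conventions of $\rYD{B}(\cB)$. Once this bookkeeping is fixed, each verification reduces to a string-diagram calculation analogous to that of Proposition~\ref{YDprop}.
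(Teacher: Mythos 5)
Your proposal is correct and follows essentially the same route as the paper: the paper's proof consists precisely of defining the right action $\triangleleft:=(\varepsilon\otimes\ide_V)c_B$ and "proceeding dually to the proof of Proposition \ref{YDprop}", which is exactly your $a_c$ and your dualization. Your version merely spells out the details (associativity via $\varepsilon m=\varepsilon\otimes\varepsilon$ and naturality of $c$ applied to $m_B$, the inverse functor $c^{a,\rho}$, and the braiding bookkeeping) that the paper leaves implicit.
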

\begin{proof}
We can define a right $B$-module structure on an object $(V,c)$ of $\Isom_{\cB}(\cM\otimes \ide,\ide\otimes\cM)$ by $\triangleleft:=(\varepsilon\otimes \ide_V)c_B$ and proceed dually to the proof of Proposition \ref{YDprop}.
\end{proof}

In particular, if $B$ is a Hopf algebra in $\cB$, then the relative centers of $\lmod{\leftexp{\cop}{B}}(\overline{\cB})$ and $\rcomod{B}(\cB)$ are equivalent by \cite{Lau2}*{Lemma~2.5}.

Note that $\lYD{B}(\cB)$ is in general not $\cB$-augmented. The trivial Yetter--Drinfeld module structure can only be defined on objects of $\cB$ for which the braiding squares to the identity. However, this only holds for \emph{all} objects of $\cB$ if $\cB$ is symmetric monoidal. Hence the monoidal center $\cZ_\cB(\cM)$ for $\cM$ a $\cB$-augmented monoidal category is \emph{not} $\cB$-augmented, in general.

We further note that \cite{Gre2} provides a result in which a different object is referred to as the relative center --- the center of a bimodule category $\cV$ over $\cM$. It is shown there that this center is equivalent to $\cHom_{\cM\boxtimes\cM^{\oop}}(\cM,\cV)$. This category is naturally a categorical module over $\cZ(\cM)$, but not monoidal.


\subsection{Modules over the Relative Monoidal Center}\label{Moritasection}

We observe that there are two natural ways to produce categorical modules over $\cZ_\cB(\cM)$. The following Lemma summarizes the two constructions.

\begin{lemma}\label{centermodules}Let $\cM$ be a $\cB$-augmented monoidal category in $\Cat^c$.
There are $\Bbbk$-linear $2$-functors
\begin{align*}
\cHom_{\cM\text{--}\cM}(\cM,-)\colon \BiMod_{\cM\text{--}\cM}&\longrightarrow \lmod{\cZ_\cB(\cM)},\\
\cV &\longmapsto \cHom_{\cM\text{--}\cM}(\cM,\cV),\\
\cHom_{\cM\text{--}\cM}^{\cB}(\cM,-)\colon \BiMod^\cB_{\cM\text{--}\cM}&\longrightarrow \lmod{\cZ_\cB(\cM)},\\
\cW &\longmapsto \cHom_{\cM\text{--}\cM}^{\cB}(\cM,\cW).
\end{align*}
\end{lemma}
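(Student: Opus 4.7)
The plan is to define both 2-functors by \emph{precomposition}. Given an $\cM$-bimodule $\cV$ (resp.\ a $\cB$-balanced $\cM$-bimodule $\cW$), an object $\phi$ of $\cZ_\cB(\cM)$, and a bimodule morphism $\rF \in \cHom_{\cM\text{--}\cM}(\cM,\cV)$ (resp.\ $\rF \in \cHom^{\cB}_{\cM\text{--}\cM}(\cM,\cW)$), I set
\[
\phi \triangleright \rF := \rF \circ \phi.
\]
On 2-morphisms, the action is horizontal composition (Godement product) of natural transformations.

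The first step is to verify that the action takes values in the right $\cHom$-category. Forgetting the $\cB$-balancing, $\phi$ is a bimodule endomorphism of $\cM$, so by Eqs.~(\ref{lambdacompose})--(\ref{rhocompose}), $\rF\circ\phi$ is again a bimodule morphism with structural isomorphisms $\lambda^{\rF\phi}_{M,V} = \lambda^{\rF}_{M,\phi(V)}\rF(\lambda^{\phi}_{M,V})$ and $\rho^{\rF\phi}_{V,N} = \rho^{\rF}_{\phi(V),N}\rF(\rho^{\phi}_{V,N})$; horizontal composition of 2-morphisms preserves~(\ref{2morphism}) and~(\ref{2morphism2}) by a direct diagram chase. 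In the $\cB$-balanced case, one moreover checks that $\rF\circ\phi$ satisfies~(\ref{betacoh}), obtained by pasting~(\ref{betacoh}) for $\phi$ on top of~(\ref{betacoh}) for $\rF$ and using~(\ref{lambdacompose}), (\ref{rhocompose}).

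The module axioms are now essentially automatic. Since the monoidal product on $\cZ_\cB(\cM)$ is reversed composition $\phi\otimes\psi = \psi\phi$ and composition of endofunctors is strictly associative and unital, one has on the nose
\[
(\phi\otimes\psi)\triangleright\rF \;=\; \rF\circ(\psi\phi) \;=\; (\rF\psi)\circ\phi \;=\; \phi\triangleright(\psi\triangleright\rF),
\qquad \ide_\cM\triangleright\rF = \rF.
\]
Hence the structural isomorphism $\chi$ of~(\ref{modulecoherence}) may be taken to be the identity and the coherence hexagon collapses, making $\cHom_{\cM\text{--}\cM}(\cM,\cV)$ (resp.\ $\cHom^{\cB}_{\cM\text{--}\cM}(\cM,\cW)$) into a strict left $\cZ_\cB(\cM)$-module category.

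For 2-functoriality in the bimodule argument, I send a bimodule morphism $\rG\colon\cV\to\cV'$ to $\rG_\ast := \rG\circ(-)$. The identity $\rG\circ(\rF\circ\phi)=(\rG\circ\rF)\circ\phi$ shows that $\rG_\ast$ intertwines the $\cZ_\cB(\cM)$-action strictly, so the intertwiner $\lambda^{\rG_\ast}$ in~(\ref{modulemorph1}) is the identity; a 2-morphism $\tau\colon\rG\Rightarrow\rG'$ is sent to the whiskering $\tau\ast(-)$, which satisfies~(\ref{2morphism}) by naturality. The same recipe applies verbatim in the $\cB$-balanced setting. The principal ``difficulty'' is bookkeeping: keeping track of the many structural isomorphisms $\lambda,\rho,\beta$ through compositions. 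No deeper categorical obstacle arises, because everything rests on the strict associativity of composition of endofunctors and the fact that composition of (balanced) bimodule morphisms is again a (balanced) bimodule morphism.
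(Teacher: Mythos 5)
Your proposal is correct and matches the paper's approach exactly: the paper defines both actions by pre-composition of functors (stating this immediately after the lemma without further elaboration), and your verification that pre-composition preserves the (balanced) bimodule morphism structure via Eqs.~(\ref{lambdacompose})--(\ref{rhocompose}) and~(\ref{betacoh}), together with the strictness of the resulting module structure, correctly fills in the details the paper leaves implicit.
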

For both, the action of $\cZ_{\cB}(\cM)$ is given by pre-composition of functors.
In fact,  the containment
\begin{align}
\cHom_{\cM\text{--}\cM}^{\cB}(\cM,\cW)\subseteq \cHom_{\cM\text{--}\cM}(\cM,\left.\cW\right|_{\cM\text{--}\cM})
\end{align}
is one of $\cZ_{\cB}(\cM)$-modules. Here, we use the $\Bbbk$-linear $2$-functor  
\begin{align*}
\left.(-)\right|_{\cM\text{--} \cM} \colon\BiMod^\cB_{\cM\text{--}\cM}\longrightarrow\colon \BiMod_{\cM\text{--}\cM},
\end{align*}
where $\left.\cW\right|_{\cM\text{--}\cM}$ is $\cW$ as $\cM$-bimodule category, forgetting about the $\cB$-balanced structure. In fact, we can construct a  right $2$-adjoint to the $2$-functor $\left.(-)\right|_{\cM\text{--}\cM}$. Here, $2$-adjoint refers to an adjoint as a $\Cat^c$-enriched functor, in the sense of \cite{Kel}*{Section 1.11}. For this, we require the following construction, generalizing Definition \ref{Isombraideddef}.

\begin{definition}\label{Isomgeneral}
Let $\cM$, $\cN$ be  monoidal categories together with a monoidal functor $\rG\colon \cN \to \cM$. Further let $\cV$ be an $\cM$-bimodule. Define the category $\Isom(\cV\triangleleft \rG,\rG\triangleright \cV)$ to consist of objects $(V,\phi)$, where $V$ is an object of $\cV$, and $\phi\colon \cV\triangleleft \rG\natisomorph \rG\triangleright \cV$ is a natural isomorphism satisfying the condition that the diagram 
\begin{align}
\label{ctensorcomp2}
\vcenter{\hbox{
\xymatrix{
V\triangleleft \rG(X\otimes Y)\ar[d]_{V\triangleleft (\mu^G)^{-1}_{X,Y}}\ar[r]^{\phi_{X\otimes Y}}&\rG(X\otimes Y)\triangleright V \\
V\triangleleft \rG(X)\otimes \rG(Y)\ar[d]_{\xi_{V,X,Y}}&\rG(X)\otimes \rG(Y)\triangleright V\ar[u]_{\mu^G_{X,Y}\triangleright V}\\
(V\triangleleft \rG (X))\triangleleft \rG(Y)\ar[d]_{\phi_X\triangleleft \rG(Y)}&\rG(X)\triangleright (\rG(Y)\triangleright V)\ar[u]_{\chi^{-1}_{X,Y,V}}\\
(\rG(X)\triangleright V)\triangleleft \rG(Y)\ar[r]^{\zeta_{X,V,Y}}&\rG(X)\triangleright (V\triangleleft \rG(Y))\ar[u]_{\rG(X)\triangleright \phi_{Y}}
}}}
\end{align}
commutes for any objects $X,Y$ of $\cN$. A morphisms $f\colon (V,\phi)\to (W,\psi)$ in $\Isom(\cV\triangleleft \rT,\rT\triangleright \cV)$ is a morphism in $\cV$ such that
\begin{align}
\psi_{X}(f\otimes \rG (X))=(\rG (X)\otimes f)\phi_{X}.
\end{align}
\end{definition}

The above construction of $\Isom(\cV\triangleleft \rG,\rG\triangleright \cV)$ can be extended to a $2$-functor, turning $\cM$-bimodules into $\cB$-balanced bimodules.

\begin{proposition}\label{balancingfunctor}
The assignment $\cV\mapsto \cV_{\cB}:=\Isom(\cV\triangleleft \rT,\rT\triangleright \cV)$ extends to a $2$-functor $(-)_{\cB}\colon \BiMod_{\cM\text{--}\cM}\to \BiMod_{\cM\text{--}\cM}^\cB$.
\end{proposition}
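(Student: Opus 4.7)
The plan is to construct explicitly, for each $\cM$-bimodule $\cV$, an $\cM$-bimodule structure on $\cV_\cB$ together with a $\cB$-balancing $\beta$, and then extend the assignment to $1$- and $2$-morphisms via the obvious functoriality.

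First I would define the bimodule structure on $\cV_\cB$. Given $(V,\phi) \in \cV_\cB$ and $M \in \cM$, set $M \triangleright (V,\phi) := (M \triangleright V, \phi^{M \triangleright})$ where $\phi^{M \triangleright}_X \colon (M \triangleright V) \triangleleft \rT(X) \to \rT(X) \triangleright (M \triangleright V)$ is the composite
\[
(M \triangleright V) \triangleleft \rT(X) \xrightarrow{\zeta_{M,V,\rT(X)}} M \triangleright (V \triangleleft \rT(X)) \xrightarrow{M \triangleright \phi_X} M \triangleright (\rT(X) \triangleright V) \xrightarrow{\chi^{-1}_{M,\rT(X),V}} (M \otimes \rT(X)) \triangleright V,
\]
followed by $\sigma_{M,X} \triangleright V$ and then $\chi_{\rT(X),M,V}$ to land in $\rT(X) \triangleright (M \triangleright V)$. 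Similarly, for $(V,\phi) \triangleleft M$, define $\phi^{\triangleleft M}_X$ using $\xi$ and $\sigma_{M,X}^{-1}$ in the analogous way. The bimodule coherences (\ref{modulecoherence}), (\ref{modulecoherence2}), (\ref{bimodcoherence1}), (\ref{bimodcoherence2}) for $\cV_\cB$ follow from the corresponding ones for $\cV$ combined with the $\cB$-augmentation axioms (\ref{augmenteddiag1})--(\ref{augmenteddiag2}) for $\sigma$.

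Next, I would equip $\cV_\cB$ with the $\cB$-balancing $\beta_{(V,\phi),X} := \phi_X$, viewed as a morphism $(V,\phi) \triangleleft \rT(X) \to \rT(X) \triangleright (V,\phi)$ in $\cV_\cB$. To check this is itself a morphism in $\cV_\cB$, one uses naturality of $\phi$ together with diagram (\ref{ctensorcomp2}) of Definition \ref{Isomgeneral}. The balancing coherences (\ref{betacoh1})--(\ref{betacoh2}) then reduce, after unfolding the definitions of $\phi^{M \triangleright}$ and $\phi^{\triangleleft M}$, precisely to diagram (\ref{ctensorcomp2}) applied to $X \otimes Y$ and to the axioms of $\sigma$; this is the place where the $\cB$-augmentation of $\cM$ is essential.

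For $1$-morphisms, given a morphism $(\rG,\lambda,\rho) \colon \cV \to \cW$ of $\cM$-bimodules, define $\rG_\cB(V,\phi) := (\rG(V), \phi^\rG)$ where
\[
\phi^\rG_X \;:=\; \lambda_{\rT(X),V}\, \rG(\phi_X)\, \rho_{V,\rT(X)}^{-1}.
\]
Condition (\ref{ctensorcomp2}) for $\phi^\rG$ follows from (\ref{modulemorph1}), (\ref{modulemorph2}), (\ref{modulemorph3}) for $\rG$ and (\ref{ctensorcomp2}) for $\phi$; that $\rG_\cB$ respects the $\cB$-balancing (diagram (\ref{betacoh})) is immediate from the definition of $\beta$. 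For a $2$-morphism $\tau \colon \rG \Rightarrow \rH$, set $(\tau_\cB)_{(V,\phi)} := \tau_V$, which is a morphism in $\cW_\cB$ because $\tau$ commutes with $\lambda$ and $\rho$ by (\ref{2morphism}), (\ref{2morphism2}), so also with $\phi^\rG$ and $\phi^\rH$. Functoriality in $\rG$ and $\tau$, together with compatibility with horizontal and vertical composition, follows by direct comparison with (\ref{lambdacompose}), (\ref{rhocompose}).

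The main obstacle is the bookkeeping in verifying the balancing coherences (\ref{betacoh1})--(\ref{betacoh2}) for $\cV_\cB$: one must rewrite the definitions of $\phi^{M \triangleright}$ and $\phi^{\triangleleft M}$ in terms of $\chi,\xi,\zeta,\sigma$ and manipulate the resulting diagram using (\ref{ctensorcomp2}) iterated on $X \otimes Y$ together with (\ref{augmenteddiag1}) and (\ref{augmenteddiag2}). All other steps are a routine translation between the two kinds of structural data, so no further difficulty arises.
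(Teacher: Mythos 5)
Your construction coincides with the paper's proof: the induced actions $M\triangleright(V,\phi)$ and $(V,\phi)\triangleleft M$ with $\sigma$ inserted, the balancing $\beta_{(V,\phi),X}=\phi_X$, and the formula $\phi^{\rG}_X=\lambda_{\rT(X),V}\,\rG(\phi_X)\,\rho^{-1}_{V,\rT(X)}$ on $1$-morphisms are exactly the paper's definitions, and the coherence checks are delegated to the same diagrams. One small correction: in the right-action formula the paper (consistently with diagram (\ref{betacoh2})) uses $V\triangleleft\sigma_{M,X}\colon V\triangleleft(M\otimes\rT(X))\to V\triangleleft(\rT(X)\otimes M)$, i.e.\ $\sigma_{M,X}$ itself rather than $\sigma_{M,X}^{-1}$, since in both the left and right induced actions one moves $\rT(X)$ past $M$ in the same direction.
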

\begin{proof}We construct $\cV_{\cB}:= \Isom(\cV\triangleleft \rT,\rT\triangleright \cV)$ as a $\cB$-balanced bimodule. Since $\cM$ is $\cB$-augmented, this category becomes a $\cM$-bimodule. The left action $X\triangleright (V,\phi)$ is the pair $(X\triangleright V,X\triangleright \phi)$, where $X\triangleright \phi$ is the composition
\begin{align*}
X\triangleright \phi=\chi(\sigma\triangleright V)\chi^{-1}(X\triangleright \phi)\zeta.
\end{align*}
The right action $(V,\phi)\triangleleft X$ is the pair $(V\triangleleft X,\phi\triangleleft X)$, where $\phi\triangleleft X$ is the composition
\begin{align*}
\phi\triangleleft X =\zeta(\phi\triangleleft X)\xi(V\triangleleft \sigma)\xi^{-1}.
\end{align*}
The structural isomorphisms $\chi, \xi, \zeta$ for $\cV_{\cB}$ can be taken to be the corresponding isomorphisms coming from the $\cM$-bimodule structure of $\cV$. We first have to verify that these are morphisms in $\cV_B$. This follows using $\otimes$-compatibility of $\sigma$ and $\phi$, combined with the coherence of $\cV$ as an $\cM$-bimodule. To demonstrate this idea, we prove the property that for any objects $M,N$ of $\cM$, $\chi_{M,N,V}$ is a morphism in $\cV_B$. That is, we have to verify commutativity of the diagram
\begin{align*}
\xymatrix{(M\otimes N\triangleright V)\triangleleft \rT X\ar[rr]^{(M\otimes N\triangleright \phi)_X}\ar[d]_{\chi_{M,N,V}\triangleleft \rT X} &&\rT X\triangleright (M\otimes N\triangleright V)\ar[d]^{\rT X\triangleright \chi_{M,N,V}}\\
(M\triangleright (N\triangleright V))\triangleleft \rT X \ar[rr]^{(M\triangleright (N\triangleright \phi))_X}&&\rT X\triangleright (M\triangleright (N\triangleright V)).
}
\end{align*}
This follows from the calculation
\begin{align*}
(\rT X&\triangleright \chi_{M,N,V})(M\otimes N\triangleright \phi)_X\\
=&(\rT X\triangleright \chi_{M,N,V})\chi_{\rT X,M\otimes N,V}(\sigma_{M\otimes N,X}\triangleright V)\chi^{-1}_{M\otimes N,\rT X,V}(M\otimes N\triangleright \phi_X)\zeta_{M\otimes N,V,\rT X}\\
=&\chi_{\rT X,M,N\triangleright V}\chi_{\rT X\otimes M,N,V}(\sigma_{M,X}\otimes N\triangleright V)(M\otimes \sigma_{N,X}\triangleright V)\chi^{-1}_{M\otimes N,\rT X,V}(M\otimes N\triangleright \phi_X)\zeta_{M\otimes N,V,\rT X}\\
=&\chi_{\rT X,M,N\triangleright V}(\sigma_{M,X}\triangleright (N\triangleright V))\chi^{-1}_{M,\rT X,N\triangleright V}(M\triangleright \chi_{\rT X,N,V})(M\triangleright (\sigma_{N,X}\triangleright V))\chi_{M,N\otimes \rT X,V}\\
&\chi^{-1}_{M\otimes N,\rT X,V}(M\otimes N\triangleright \phi_X)\zeta_{M\otimes N,V,\rT X}\\
=&\chi_{\rT X,M,N\triangleright V}(\sigma_{M,X}\triangleright (N\triangleright V))\chi^{-1}_{M,\rT X,N\triangleright V}(M\triangleright \chi_{\rT X,N,V})(M\triangleright (\sigma_{N,X}\triangleright V))(M\triangleright\chi^{-1}_{N,\rT X,V})\\&
(M\triangleright (N\triangleright\phi_X))(M\triangleright \zeta_{M,V,\rT X})\zeta_{M,N\triangleright V,\rT X}(\chi_{M,N,V}\triangleleft \rT X)\\
=&\chi_{\rT X,M,N\triangleright V}(\sigma_{M,X}\triangleright (N\triangleright V))\chi^{-1}_{M, \rT X, N\triangleright V}(M\triangleright (N\triangleright \phi)_X)\zeta_{M,N\triangleright V,\rT X}(\chi_{M,N,V}\triangleleft \rT X)\\
=&(M\triangleright (N\triangleright \phi))_X(\chi_{M,N,V}\triangleleft \rT X).
\end{align*}
The first equality is the definition of $(M\otimes N\triangleright \phi)$. In the second equality we use Eq. (\ref{augmenteddiag1}) and Eq. (\ref{modulecoherence}). 
The third equality applies naturality of $\chi$ to the morphism $\sigma_{M,X}$ in the second $\otimes$-component, and then uses naturality of $\chi$ applied to $\sigma_{N,X}$ (under application of Eq. (\ref{modulecoherence})). Again under application of the left module coherence and Eq. (\ref{bimodcoherence2}), the forth equality applies naturality of $\chi$ to $\phi_X$. The last two equalities follows by definition of $M\triangleright (N\triangleright \phi)$.


Next, we check that $\cV$ is a $\cB$-balanced $\cM$-bimodule. This follows using the natural isomorphism 
\begin{align*}
\beta_{(V,\phi),X}=\phi_{X}\colon V\triangleleft \rT X\isomorph \rT X\triangleright V.
\end{align*}
The isomorphism $\beta$ defined this way is, by Eq. (\ref{ctensorcomp2}), compatible with $\otimes$ as required in Eq. (\ref{betacoh1}). The compatibility condition (\ref{betacoh2}) holds by the way in which $M\triangleright (\phi\triangleleft N)$ is defined. However, we have to verify that $\beta$ is indeed a morphism in $\cV_{\cB}$. This follows from naturality of $\phi$ applied to $\rT(\Psi_{X,Y})$, using Eq. (\ref{sigmadesent}), and the $\otimes$-compatibility from Eq. (\ref{ctensorcomp2}). Indeed, 
\begin{align*}
&(\rT Y\triangleright \phi_X)(\phi\triangleleft \rT X)_Y
=(\rT Y\triangleright \phi_X)\zeta_{\rT Y,V,\rT X}(\phi_X\triangleleft \rT X)\xi_{V,\rT Y,\rT X}(V\triangleleft\sigma_{\rT X, Y})\xi^{-1}_{V,\rT X,\rT Y}\\
&=\chi_{\rT Y,\rT X,V}((\mu^{\rT}_{Y,X})^{-1}\triangleright V)\phi_{Y\otimes X}(V\triangleleft \mu_{Y,X}^{\rT})
(V\triangleleft\sigma_{\rT X, Y})\xi^{-1}_{V,\rT X,\rT Y}\\
&=\chi_{\rT Y,\rT X,V}
(\sigma_{\rT X, Y}\triangleright V)
(\left(\mu_{X,Y}^{\rT}\right)^{-1}\triangleright V)
\phi_{X\otimes Y}(V\triangleleft \mu^{\rT}_{X,Y})
\xi^{-1}_{V,\rT X,\rT Y}\\
&=\chi_{\rT Y,\rT X,V}
(\sigma_{\rT X, Y}\triangleright V)\chi^{-1}_{\rT X,\rT Y,V}(\rT X\triangleright \phi_Y)\zeta_{\rT X,V,\rT Y}(\phi_{V}\triangleleft \rT Y)\\
&=(\rT X\triangleright \phi)_Y(\phi_{X}\triangleleft \rT Y).
\end{align*}

Now assume given a morphism of $\cM$-bimodules $\rF\colon \cV\to \cW$. We can define an induced morphism $\rF_{\cB}\colon \cV_{\cB}\to \cW_{\cB}$ by declaring that an object $(V,\phi)$ be mapped to the pair $(\rF(V), \phi^{\rF(V)})$, where
$$
\phi^{\rF(V)}=\lambda_{\rT X,V}\rF(\phi_X)\rho^{-1}_{V,\rT X}.
$$
It follows that $\phi^{\rF(V)}$ satisfies the required tensor compatibility from Eq. (\ref{ctensorcomp2}) using that $\phi$ itself satisfies this compatibility, and that $\rF$ is a morphism of bimodules. Functoriality is also readily verified using that the composition of two morphisms of $\cM$-bimodules is again a morphism of  $\cM$-bimodules. Finally, $\rF_{\cB}$ is a morphism of $\cB$-balanced bimodules as by construction of $\phi^{\rF(V)}$ the diagram
\begin{align*}
\xymatrix{
\rF(V\triangleleft \rT X)\ar[rrrrr]^{\rF(\beta_{(V,\phi),X})=\rF(\phi_X)}\ar[d]^{\rho_{V,\rT X}}&&&&&\rF(\rT X\triangleright V)\ar[d]^{\lambda_{\rT X,V}}\\
\rF(V)\triangleleft \rT X\ar[rrrrr]^{\beta_{(\rF V,\phi^{\rF V}),X}=\lambda_{\rT X,V}\rF(\phi_X)\rho^{-1}_{V,\rT X}}&&&&&\rT X\triangleright \rF(V)\\
}
\end{align*}
commutes.
\end{proof}

The functor thus constructed provides a right $2$-adjoint to the functor $\left.(-)\right|_{\cM\text{--}\cM}$ of forgetting the $\cB$-augmented structure:

\begin{theorem}\label{adjunctionthm}
Given a $\cB$-balanced $\cM$-bimodule $\cW$ and a $\cM$-bimodule $\cV$, there exists is an isomorphism of categories in $\Cat^c$
\begin{equation*}
\cHom_{\cM\text{--}\cM}(\left.\cW\right|_{\cM\text{--}\cM},\cV)\cong \cHom_{\cM\text{--}\cM}^{\cB}(\cW,\cV_{\cB})
\end{equation*}
which is natural in $\cW$ and $\cV$.
\end{theorem}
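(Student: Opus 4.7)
The plan is to exhibit explicit mutually inverse functors between the two hom-categories. In one direction, there is a forgetful $\cM$-bimodule functor $\rU\colon \cV_{\cB}\to \cV|_{\cM\text{--}\cM}$ sending $(V,\phi)\mapsto V$, which with the identity natural isomorphisms $\lambda^{\rU}=\ide$, $\rho^{\rU}=\ide$ is readily a morphism in $\BiMod_{\cM\text{--}\cM}$. Composition with $\rU$ gives a functor
\[
\Theta_{\cW,\cV}\colon \cHom_{\cM\text{--}\cM}^{\cB}(\cW,\cV_{\cB})\longrightarrow \cHom_{\cM\text{--}\cM}(\cW|_{\cM\text{--}\cM},\cV),\qquad \rG\longmapsto \rU\rG.
\]

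In the other direction, given a morphism $(\rF,\lambda^{\rF},\rho^{\rF})\colon \cW|_{\cM\text{--}\cM}\to \cV$ of $\cM$-bimodules, I define $\widetilde{\rF}\colon \cW\to \cV_{\cB}$ on objects by $\widetilde{\rF}(W)=(\rF(W),\phi^{W})$, where
\[
\phi^{W}_{X}:=\lambda^{\rF}_{\rT(X),W}\;\rF(\beta^{\cW}_{W,X})\;\bigl(\rho^{\rF}_{W,\rT(X)}\bigr)^{-1}\colon \rF(W)\triangleleft \rT(X)\isomorph \rT(X)\triangleright \rF(W).
\]
To see that $(\rF(W),\phi^{W})$ is an object of $\cV_{\cB}$ one checks the tensor-compatibility diagram (\ref{ctensorcomp2}) for $\rG=\rT$: this diagram breaks into three pieces glued along naturality squares --- the first and third piece are the coherence diagrams (\ref{modulemorph1}), (\ref{modulemorph2}) for $\rF$ (together with the compatibility (\ref{modulemorph3}) between $\lambda^{\rF}$ and $\rho^{\rF}$), while the middle piece is precisely the image under $\rF$ of the $\cB$-balancing coherence (\ref{betacoh1}) for $\cW$. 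On morphisms set $\widetilde{\rF}(f)=\rF(f)$; this lies in $\cV_{\cB}$ by naturality of $\lambda^{\rF}$ and $\rho^{\rF}$ together with naturality of $\beta^{\cW}$.

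Next I equip $\widetilde{\rF}$ with the structure of a $\cB$-balanced bimodule morphism. The required natural isomorphisms $\lambda^{\widetilde{\rF}}$, $\rho^{\widetilde{\rF}}$ are taken to be $\lambda^{\rF}$, $\rho^{\rF}$ themselves --- one must verify these are morphisms in $\cV_{\cB}$ (i.e.\ commute with the half-braidings $M\triangleright\phi$ and $\phi\triangleleft M$ defined in Proposition \ref{balancingfunctor}). This is a direct diagram chase using the very definitions of $M\triangleright\phi^{W}$ and $\phi^{W}\triangleleft M$, the bimodule coherence of $\rF$, and the definition of $\phi^{W}$ above. Finally the $\cB$-balancing compatibility (\ref{betacoh}) for $\widetilde{\rF}$ unfolds literally to the defining equation of $\phi^{W}$, so it holds by construction. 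Functoriality of $\rF\mapsto \widetilde{\rF}$ on $2$-morphisms is immediate.

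I expect the main obstacle to be the verification in the previous paragraph that $\lambda^{\rF}$, $\rho^{\rF}$ are morphisms in $\cV_{\cB}$, which requires unwinding the half-braidings $M\triangleright\phi$ and $\phi\triangleleft M$ on $\cV_{\cB}$ via $\sigma$, $\chi$, $\xi$, $\zeta$ and combining them with the full package of coherence conditions for a bimodule morphism, as well as the interplay between $\lambda^{\rF}$ and $\rho^{\rF}$ encoded in (\ref{modulemorph3}). Once this is established, the two assignments $\Theta_{\cW,\cV}$ and $\rF\mapsto\widetilde{\rF}$ are mutually inverse: $\rU\widetilde{\rF}=\rF$ with matching structural isomorphisms by construction, and conversely, for any $(\rG,\lambda^{\rG},\rho^{\rG},\pi^{\rG})\in\cHom_{\cM\text{--}\cM}^{\cB}(\cW,\cV_{\cB})$ with $\rG(W)=(\rU\rG(W),\phi^{\rG(W)})$, the compatibility (\ref{betacoh}) forces
\[
\phi^{\rG(W)}_{X}=\lambda^{\rU\rG}_{\rT(X),W}\,\rU\rG(\beta^{\cW}_{W,X})\,(\rho^{\rU\rG}_{W,\rT(X)})^{-1},
\]
so $\widetilde{\rU\rG}=\rG$ as a $\cB$-balanced bimodule morphism. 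Both assignments are manifestly natural in $\cW$ and $\cV$, completing the adjunction.
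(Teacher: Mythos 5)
Your proposal is correct and follows essentially the same route as the paper's proof: the key formula $\phi^{W}_{X}=\lambda^{\rF}_{\rT(X),W}\,\rF(\beta^{\cW}_{W,X})\,(\rho^{\rF}_{W,\rT(X)})^{-1}$ is exactly the one used there, the inverse direction is the same restriction/forgetful construction, and the mutual-inverse argument via the compatibility (\ref{betacoh}) matches the paper's. The only difference is presentational (you phrase restriction as postcomposition with a forgetful functor and flag explicitly the verification that $\lambda^{\rF},\rho^{\rF}$ are morphisms in $\cV_{\cB}$, which the paper treats more tersely).
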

\begin{proof}
Assume given a morphism $\rG\colon \left.\cW\right|_{\cM\text{--}\cM}\to \cV$ of $\cM$-bimodules. The bimodule $\cW$ is $\cB$-balanced. This means, in particular, that there exists natural isomorphisms $\beta_{W,X} \colon W\triangleleft \rT(X) \to \rT(X)\triangleright W$ for any objects $W$ of $\cW$ and $X$ of $\cB$. In order to construct a morphism of bimodules $\rG_{\cB}\colon \cW\to \cV_\cB$ we map an object $W\in \cW$ to the pair $(\rG(W),\phi^{\rG(W)})$, where $\phi^{\rG(W)}=\lambda\rG(\beta_{W,X})\rho^{-1}$. Indeed, $\phi^{\rG(W)}$ satisfies the $\otimes$-compatibility of Eq. (\ref{ctensorcomp2}), which follows, under application of naturality of $\lambda, \rho$ and $\mu^{\rT}$, from Eq. (\ref{betacoh1}). Functoriality of $\rG_\cB$ is clear by construction.

Next, we check that $\rG_{\cB}$ is a morphism of $\cB$-balanced $\cM$-bimodule. 
Indeed, the compatibility of $\beta$ which makes $\cW$ a $\cB$-balanced $\cM$-bimodules with the structural isomorphisms of $\cM$ as a $\cB$-augmented monoidal category imply that $\phi^{\rG(W)}$ satisfies $\otimes$-compatibility of Eq. (\ref{ctensorcomp2}). 
Conversely, given a morphism of $\cB$-balanced bimodules $\rH\colon \cW\to \cV_{\cB}$ we can restrict to a morphism 
$\left.\rH\right|_{\cM\text{--}\cM}\colon \left.\cW\right|_{\cM\text{--}\cM}\to \cV$. 

Starting with a morphism $\rH\colon \cW\to \cV_{\cB}$, we consider $\left(\left.\rH\right|_{\cM\text{--}\cM}\right)_{\cB}$, which maps an object $W$ to the pair $(\rH(W),\lambda\rH(\beta)\rho^{-1})$. But as $\rH$ is a morphism of $\cB$-balanced bimodules, we see that $$\lambda_{\rT X,W}\rH(\beta_{W,X})\rho^{-1}_{W,\rT(X)}=\left(\phi^{\rH(W)}\right)_{X},$$ and hence, $\left(\left.\rH\right|_{\cM\text{--}\cM}\right)_{\cB}=\rH$. 
On the other hand, given a morphism $\rG\colon \left.\cW\right|_{\cM\text{--}\cM}\to \cV$, consider the morphism of $\cM$-bimodules $\left.\rG_{\cB}\right|_{\cM\text{--}\cM}$. It is clear that $\left.\rG_{\cB}\right|_{\cM\text{--}\cM}(W)=\rG(W)$. This shows the claimed isomorphisms of $\Bbbk$-linear categories.

It remains to show that the isomorphism of categories in the statement of the theorem are natural in $\cW$ and $\cV$. To this end, let $\rG\colon \left.\cW\right|_{\cM\text{--}\cM}\to \cV$, $\rF\colon \cV\to \cV'$ be morphisms of $\cM$-bimodules, and $\rH\colon \cW\to \cW'$, $\rG'\colon \cW'\to \cV_{\cB}$ be a morphism of $\cB$-balanced $\cM$-bimodule morphisms. Consider the compositions
\begin{align*}
\begin{array}{ccc}
\vcenter{\hbox{\xymatrix{
\left.\cW\right|_{\cM\text{--}\cM}\ar[rr]^{\rG}\ar[rrd]_{\rF\rG}&&\cV\ar[d]^{\rF}\\
&&\cV'
}}}&,&
\vcenter{\hbox{\xymatrix{
\cW\ar[rr]^{\rG'\rH}\ar[d]^{\rH}&&\cV_{\cB}\\
\cW'\ar[rru]_{\rG'}&&
}}}.
\end{array}
\end{align*}
We want to show that $\rF_{\cB}\rG_{\cB}=(\rF\rG)_{\cB}$ and $\left.(\rG'\rH)\right|_{\cM\text{--}\cM}=\left.\rG'\right|_{\cM\text{--}\cM}\left.\rH\right|_{\cM\text{--}\cM}$. First, 
\begin{align*}
\rF_{\cB}\rG_{\cB}(W)=\rF_{\cB}(\rG (W), \phi^{\rG (W)})=(\rF\rG(W), \phi^{\rF\rG(W)})=(\rF\rG)_{\cB},
\end{align*}
which uses a very similar argument as in the proof of functoriality of $(-)_{\cB}$ in Proposition \ref{balancingfunctor}. Further, for a morphism $f$ in $\cW$, both sides evaluate to $\rF\rG(W)$. The second equality is clear. This concludes the proof of functoriality, and hence the proof of the theorem, as we observe that the adjunction is $\Cat^c$-enriched, in the sense of \cite{Kel}*{Section 1.11}, since we have an isomorphism of categories rather than a bijection of sets. 
\end{proof}

Under the biequivalence from Theorem \ref{bimoduletheorem}, we may reformulate the $2$-adjuction as a collection of \emph{equivalences} of categories
\begin{equation*}
\cHom_{\cM\boxtimes\cM^{\oop}}(\left.\cW\right|_{\rB},\cV)\simeq \cHom_{\cM\boxtimes_{\cB}\cM^{\oop}}(\cW,\cV_{\cB})
\end{equation*}
which is natural in $\cW$ and $\cV$. Here, $\cV$ is a $\cM\boxtimes\cM^{\oop}$-module and $\cW$ is a $\cM\boxtimes_{\cB} \cM^{\oop}$-module. The module $\left.\cW\right|_{\rB}$ is the restriction of the action along the universal functor $\rB\colon \cM\boxtimes \cM^{\\op}\to \cM\boxtimes_{\cB}\cM^{\oop}$.


As a consequence, we obtain a relationship between the two natural ways to introduce categorical modules over the relative monoidal center $\cZ_{\cB}(\cM)$ presented in Lemma \ref{centermodules}. 

\begin{corollary}
In the setup of Theorem \ref{adjunctionthm}, there are isomorphisms of $\cZ_{\cB}(\cM)$-modules
\begin{equation*}
\cHom_{\cM\text{--}\cM}(\cM,\cV)\cong \cHom_{\cM\text{--}\cM}^{\cB}(\cM,\cV_{\cB}),
\end{equation*}
which are natural in $\cV$.
\end{corollary}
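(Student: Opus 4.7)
The plan is to derive the corollary directly from Theorem \ref{adjunctionthm} applied with $\cW = \cM$, viewed as a $\cB$-balanced $\cM$-bimodule via Lemma \ref{regularbalanced} (where the $\cB$-balancing is given by $\beta_{M,X}=\sigma_{M,X}$). Since $\left.\cM\right|_{\cM\text{--}\cM}$ is just $\cM$ as an $\cM$-bimodule, the theorem immediately yields the underlying isomorphism of categories
\[
\cHom_{\cM\text{--}\cM}(\cM,\cV)\cong \cHom_{\cM\text{--}\cM}^{\cB}(\cM,\cV_{\cB}).
\]
The nontrivial content of the corollary lies in verifying that this isomorphism intertwines the natural $\cZ_{\cB}(\cM)$-actions (by pre-composition) on the two sides, as described in Lemma \ref{centermodules}.

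To set this up concretely, recall from the proof of Theorem \ref{adjunctionthm} that the isomorphism sends $\rH\colon \cM\to \cV$ to the morphism $\rH_{\cB}$ given on objects by $\rH_{\cB}(M)=(\rH(M),\phi^{\rH(M)})$ with $\phi^{\rH(M)}_X = \lambda^{\rH}_{\rT(X),M}\rH(\sigma_{M,X})(\rho^{\rH}_{M,\rT(X)})^{-1}$. I would then show that for any $\rG\in \cZ_{\cB}(\cM)$, we have the identity $(\rH\rG)_{\cB} = \rH_{\cB}\rG$ as morphisms $\cM\to \cV_{\cB}$ of $\cB$-balanced bimodules. Evaluating both sides at $M\in\cM$ reduces this to showing
\[
\phi^{(\rH\rG)(M)}_X = \phi^{\rH(\rG(M))}_X
\]
for every $X\in\cB$.

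The verification expands $\phi^{(\rH\rG)(M)}_X$ using the composition laws (\ref{lambdacompose}) and (\ref{rhocompose}) for the structural isomorphisms $\lambda^{\rH\rG}$, $\rho^{\rH\rG}$, and then applies the defining property (\ref{centeraugm}) of an object $\rG$ of the relative center, namely $\rG(\sigma_{M,X}) = (\lambda^{\rG}_{\rT(X),M})^{-1}\sigma_{\rG(M),X}\rho^{\rG}_{M,\rT(X)}$. Substituting in, the $\lambda^{\rG}$ and $\rho^{\rG}$ factors cancel with their inverses introduced by the composition rules, leaving exactly $\lambda^{\rH}_{\rT(X),\rG(M)}\rH(\sigma_{\rG(M),X})(\rho^{\rH}_{\rG(M),\rT(X)})^{-1} = \phi^{\rH(\rG(M))}_X$, which is the desired identity.

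Naturality in $\cV$ follows from the functoriality of $(-)_{\cB}$ established in Proposition \ref{balancingfunctor}: given a morphism $\rF\colon \cV\to \cV'$ of $\cM$-bimodules, the induced $\rF_{\cB}\colon \cV_{\cB}\to \cV'_{\cB}$ satisfies $(\rF\rH)_{\cB}=\rF_{\cB}\rH_{\cB}$ for $\rH\in\cHom_{\cM\text{--}\cM}(\cM,\cV)$, which is the required naturality square. There is no serious obstacle here; the main step is the telescoping computation above, whose success is a direct consequence of the fact that the relative center is precisely the subcategory of bimodule endofunctors compatible with $\sigma$.
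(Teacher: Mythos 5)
Your proposal is correct and follows the route the paper intends: the corollary is stated as an immediate consequence of Theorem \ref{adjunctionthm} specialized to $\cW=\cM$ with its regular $\cB$-balancing $\beta_{M,X}=\sigma_{M,X}$ from Lemma \ref{regularbalanced}, and the paper supplies no further argument. Your explicit verification that the isomorphism intertwines the pre-composition actions --- the telescoping of $\lambda^{\rG},\rho^{\rG}$ against Eq.~(\ref{centeraugm}) to get $(\rH\rG)_{\cB}=\rH_{\cB}\rG$ --- correctly fills in the detail the paper leaves implicit.
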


\begin{example}
Let $B\in \BiAlg(\cB)$, where $\cB=\lmod{H}$, for $H$ a quasi-triangular Hopf algebra over $\Bbbk$. Then $\cM=\lmod{B}(\Vect)$ is a bimodule over $\cB$ using the induced bimodule structure. It follows that $\cM_{\cB}$ is equivalent to the category $\lmod{B}(\cB)$.
\end{example}

\begin{example}
Given functors of $\cB$-augmented monoidal categories $\rH_1,\rH_2\colon \cM\to \cN$, we can define the bimodule $\leftexp{\rH_1}{\cM}^{\rH_2}$.

A special case is to use the functor $\triv:=\rT\rF\colon \cM\to \cM$. We call the $\cZ_{\cB}(\cM)$-module $\cH_{\cB}(\cM):=\cHom_{\cM\text{--}\cM}^{\cB}(\cM, \leftexp{\reg}{\cM}^{\triv})$ the \emph{relative Hopf center} of $\cM$ over $\cB$. We will  see in Example \ref{Hopfexample} how this name is justified by a characterization using Hopf modules as in \cite{Lau}.
\end{example}

When working with \emph{finite} multitensor tensor categories \cite{EGNO}*{Sections 1.8, 4.1}, stronger statements can be derived.
It follows that the finite multitensor categories $\cZ_{\cB}(\cM)$ and $\cM\boxtimes_{\cB}\cM^{\oop}$ are \emph{categorically Morita equivalent}, in the terminology of \cite{EGNO}*{Section 7.12}, as their $2$-categories of modules are biequivalent.

\begin{corollary}\label{centermorita}
If $\cB$ and $\cM$ are finite $\Bbbk$-multitensor categories, then $\cHom_{\cM\boxtimes_{\cB}\cM^{\oop}}(\cM,-)$ is part of a biequivalence of $2$-categories
between $\lmod{\cM\boxtimes_{\cB}\cM^{\oop}}$ and $\lmod{\cZ_{\cB}(\cM)}$.
\end{corollary}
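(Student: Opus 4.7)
The plan is to reduce Corollary \ref{centermorita} to the standard categorical Morita theorem for finite multitensor categories, as developed in \cite{EGNO}*{Sections 7.12--7.13}. Under the biequivalence
\[
\lmod{\cM\boxtimes_\cB\cM^{\oop}} \simeq \BiMod_{\cM\text{--}\cM}^{\cB}
\]
of Theorem \ref{bimoduletheorem}, the regular $\cB$-balanced bimodule $\cM$ from Lemma \ref{regularbalanced} corresponds to a distinguished module object whose internal endomorphism algebra is, by the very definition of $\cZ_\cB(\cM)$, precisely $\cZ_\cB(\cM)$ as a monoidal category. The strategy is therefore to show that $\cM$ serves as an exact indecomposable progenerator in $\lmod{\cM\boxtimes_\cB\cM^{\oop}}$, so that Ostrik's theorem yields the desired biequivalence, realized by $\cHom_{\cM\boxtimes_\cB\cM^{\oop}}(\cM,-)$ with quasi-inverse $(-)\boxtimes_{\cZ_\cB(\cM)}\cM$.

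First I would verify that $\cM\boxtimes_\cB\cM^{\oop}$ is itself a finite multitensor category. Under the finiteness hypothesis, $\cM$ and $\cM^{\oop}$ are finite exact $\cB$-module categories via the $\cB$-augmentation structure, and the results of \cite{DSS} guarantee that their relative tensor product exists as a finite $\Bbbk$-linear abelian category. Theorem \ref{relativeproductproperties} equips it with a rigid monoidal structure; the exactness of this tensor product in both variables is inherited from $\cM$ and $\cM^{\oop}$ together with the exactness properties of the quotient functor $\cM\boxtimes\cM^{\oop}\to\cM\boxtimes_\cB\cM^{\oop}$. Second, I would verify that $\cM$ is an exact $\cM\boxtimes_\cB\cM^{\oop}$-module. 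This follows because $\cM$ is exact over $\cM\boxtimes\cM^{\oop}$ (as a finite multitensor category is always exact as a bimodule over its enveloping category, \cite{EGNO}*{Proposition 7.12.14}), and the action of $\cM\boxtimes_\cB\cM^{\oop}$ is obtained by restriction along the essentially surjective monoidal functor $\cM\boxtimes\cM^{\oop}\to \cM\boxtimes_\cB\cM^{\oop}$, which preserves exactness.

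With these two points established, I would invoke Ostrik's theorem in the form of \cite{EGNO}*{Theorem 7.12.16}: for a finite multitensor category $\cC$ and an exact $\cC$-module $\cN$, the functor $\cHom_\cC(\cN,-)\colon\lmod{\cC}\to\lmod{\End_\cC(\cN)}$ is a biequivalence of $2$-categories. Applying this to $\cC=\cM\boxtimes_\cB\cM^{\oop}$ and $\cN=\cM$, together with the identification $\End_{\cM\boxtimes_\cB\cM^{\oop}}(\cM)\simeq \cZ_\cB(\cM)$, produces the required biequivalence. The main obstacle will be the first step: one must carefully reconcile the rigid monoidal structure on $\cM\boxtimes_\cB\cM^{\oop}$ obtained via the pseudo-colimit construction of Appendix \ref{appendixA} with the finite abelian structure coming from \cite{DSS}, ensuring that the tensor product is biexact and that the result indeed satisfies the hypotheses of a finite multitensor category to which Ostrik's theorem applies. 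Once this compatibility is in place, the remaining steps follow routinely from the cited results.
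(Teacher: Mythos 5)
Your proposal is correct and follows essentially the same route as the paper: establish that $\cM\boxtimes_{\cB}\cM^{\oop}$ is a finite multitensor category via Theorem \ref{relativeproductproperties} and \cite{DSS}, identify its dual category with respect to the module $\cM$ as $\cZ_{\cB}(\cM)$ using Theorem \ref{bimoduletheorem} and Lemma \ref{centermodules}, and invoke \cite{EGNO}*{Theorem 7.12.16}. The only divergence is which hypothesis of that theorem you each verify explicitly --- you check exactness of the module $\cM$ (via descent of exactness along the surjective tensor functor $\cM\boxtimes\cM^{\oop}\to\cM\boxtimes_{\cB}\cM^{\oop}$) while the paper checks faithfulness; strictly both are needed, so the two arguments are complementary rather than genuinely different.
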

\begin{proof}The category $\cM\boxtimes_{\cB}\cM^{\oop}$ is a $\Bbbk$-multitensor category by Theorem \ref{relativeproductproperties} and finite by \cite{DSS}*{Theorem 3.3}. 
Further, with the construction of the functors in Lemma \ref{centermodules}, we have that $\cZ_{\cB}(\cM)$ is the image of the regular $\cB$-balanced bimodule $\cM$. This multitensor category is finite by \cite{EGNO}*{Proposition 7.11.6}. Now, the $\cM\boxtimes_{\cB}\cM^{\oop}$-module $\cM$ is faithful as no non-zero object acts by zero on it. Hence, by \cite{EGNO}*{Theorem 7.12.16}, the functor $\cHom_{\cM\boxtimes_{\cB}\cM^{\oop}}(\cM,-)$ is part of an equivalence of categories.
\end{proof}

This result is a version of \cite{EGNO}*{Proposition 7.13.8},  \cite{Ost}*{Section 2} for the relative monoidal center. 


\section{Representation-Theoretic Examples}\label{reptheorysect}

In this section, we apply the general results from the previous section to monoidal categories of representation-theoretic origin. This way, the results of this paper can, in particular, be applied to quantum groups $\Ug$ for generic $q$ (see Section \ref{qgroupssect}) and small quantum groups $u_\epsilon(\fr{g})$ (see Section \ref{smallqgroupssect}), where $\epsilon$ is a root of unity. 


\subsection{Yetter--Drinfeld Module Tensor Actions}\label{YDactionsect}

Let $\cB$ be a braided monoidal category with braiding $\Psi$. 
The category $\lmod{B}(\cB)$ (or $\lcomod{B}(\cB)$) of modules (respectively, comodules) over a bialgebra $B$ in $\cB$ is a monoidal category. Hence, we can again consider (co)algebra objects in this category.

Recall Proposition \ref{YDprop} which states that the relative center $\cZ_{\cB}(\cM)$ is equivalent as a braided monoidal category to the category of Yetter--Drinfeld modules $\lYD{B}(\cB)$. The constructions in Section \ref{catsect} lead to the following statement, for which a more direct proof was given in \cite{Lau2}*{Theorem~2.3}. We give an alternative proof here.

\begin{corollary}\label{mainthm}
Let $B$ be a bialgebra in $\cB$, $\cM=\lmod{B}(\cB)$ and $A$ an algebra in $\lcomod{B}(\cB)$. There is a left categorical action of the center $\cZ_{\cB}(\cM)\simeq \lYD{B}(\cB)$:
\begin{align*}
\triangleright\colon \lYD{B}(\cB)\boxtimes \lmod{A}(\lcomod{B}(\cB))&\longrightarrow \lmod{A}(\lcomod{B}(\cB)).
\end{align*}
An object $(V,a_V,\delta_V)$ of $\lYD{B}(\cB)$ acts on an object $(W,a_W,\delta_W)$ of $\lmod{A}(\lcomod{B}(\cB))$ by
\begin{gather}
(V,a_V,\delta_V)\triangleright (W,a_W,\delta_W):=(V\otimes W,a_{V\triangleright W}, \delta_{V\triangleright W}),\\
a_{V\triangleright W}:=(a_V\otimes a_W)(\ide_B\otimes \Psi_{A,V}\otimes \ide_W)(\delta_A\otimes \ide_{V\otimes W}),\\
\delta_{V\triangleright W}:=(m_B\otimes \ide_{V\otimes W})(\ide_B\otimes \Psi_{V,B}\otimes \ide_W)(\delta_V\otimes \delta_W).
\end{gather}
\end{corollary}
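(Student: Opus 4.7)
The plan is to exploit the equivalence $\cZ_\cB(\cM) \simeq \lYD{B}(\cB)$ of Proposition \ref{YDprop} and construct the claimed action functor $\triangleright$ explicitly, verifying the module axioms by direct computation in $\cB$ using the graphical calculus of Example \ref{bialgebraexpl}. The classical fact underlying the statement is that modules over a $B$-comodule algebra carry a tensor action by Yetter--Drinfeld modules; here, everything is internalized inside $\cB$.

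First, I would check that $V \triangleright W = (V \otimes W, a_{V \triangleright W}, \delta_{V \triangleright W})$ is a well-defined object of $\lmod{A}(\lcomod{B}(\cB))$. The $A$-module axioms for $a_{V \triangleright W}$ follow from $(A, \delta_A)$ being a comodule algebra and $(V, a_V)$ being a $B$-module, while the $B$-comodule axioms for $\delta_{V \triangleright W}$ are a direct consequence of coassociativity of $\delta_V$ and $\delta_W$ together with the bialgebra axioms of $B$. The crucial compatibility---that $\delta_{V \triangleright W}$ is a morphism of $A$-modules---is a Yetter--Drinfeld-type computation: one pushes the strand $\delta_A$ past $a_V$ using the YD condition (\ref{YDcond}) on $V$ in the graphical form (\ref{ydpic}), together with the bialgebra axiom (\ref{bialgebraaxiom}) of $B$ and the fact that $A$ is a comodule algebra. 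This strand-diagram calculation is the main technical content of the proof.

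Next I will promote the assignment $(V, W) \mapsto V \triangleright W$ to a monoidal functor $\lYD{B}(\cB) \to \Ends^c(\lmod{A}(\lcomod{B}(\cB)))$. Naturality in $V$ and $W$ is immediate from the formulas. The structure isomorphism $(V_1 \otimes V_2) \triangleright W \isomorph V_1 \triangleright (V_2 \triangleright W)$ can be taken to be the identity of the underlying object of $\cB$, and checking that it intertwines the two $A$-actions and the two $B$-coactions uses that the Yetter--Drinfeld tensor product on $V_1 \otimes V_2$ builds in $\Delta_B$ and the braiding $\Psi$ in precisely the pattern needed to match the iterated single actions. The unit axiom reduces to the identities $(\varepsilon \otimes \ide)\delta_A = \ide$ and $m_B(1 \otimes \ide) = \ide$. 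A more conceptual route would pass through the categorical machinery of Section \ref{Moritasection}, realizing the action abstractly via Lemma \ref{centermodules}, but any such path still reduces to the same Yetter--Drinfeld compatibility check, so I expect no genuine shortcut past the computation in the first step.
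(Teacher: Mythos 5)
Your proof is correct, but it is not the route the paper takes. You give the direct verification: define the action by the stated formulas, check that $V\triangleright W$ lands in $\lmod{A}(\lcomod{B}(\cB))$ (the key step being the Yetter--Drinfeld-type strand computation showing that the $A$-action on $V\otimes W$ is a map of $B$-comodules), and then check monoidality with identity coherence data. This is essentially the argument of \cite{Lau2}*{Theorem 2.3}, which the paper explicitly cites as the ``more direct proof'' before offering an alternative. The paper instead specializes Lemma \ref{centermodules}: it equips $\cV=\lmod{A}(\cB)$ with a $\cB$-balanced $\cM$-bimodule structure (trivial right action; left action of $\lmod{B}(\cB)$ twisted through the $B$-coaction on $A$), so that $\cHom_{\cM\text{--}\cM}^{\cB}(\cM,\cV)$ is automatically a $\cZ_{\cB}(\cM)$-module by pre-composition of functors, and then identifies $\cHom_{\cM\text{--}\cM}^{\cB}(\cM,\cV)\simeq \lmod{A}(\lcomod{B}(\cB))$ by an argument parallel to Proposition \ref{YDprop}, extracting the $B$-comodule structure from $c_B(\ide_V\otimes 1)$. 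Your closing remark that the conceptual route ``still reduces to the same Yetter--Drinfeld compatibility check'' is therefore a little too pessimistic: in the paper's approach the module axioms for the action come for free from the $2$-functor of Lemma \ref{centermodules}, and the computational burden is displaced onto the identification of the two categories --- a cognate but not identical calculation. What your approach buys is explicitness (the formulas for $a_{V\triangleright W}$ and $\delta_{V\triangleright W}$ are verified directly rather than reconstructed from an abstract equivalence); what the paper's approach buys is that the same mechanism uniformly produces further categorical modules over $\cZ_{\cB}(\cM)$ from other balanced bimodules, as exploited elsewhere in Section \ref{Moritasection}.
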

\begin{proof}
We note that the result is a special case of Lemma \ref{centermodules}, under the equivalence from Proposition \ref{YDprop}. For this, we consider the category $\cV=\lmod{A}(\cB)$. Note that these are \emph{not} modules in $\lcomod{B}(\cB)$. We now give $\cV$ a $\cB$-balanced $\cM$-bimodule structure. First, there is the trivial right action, just given by tensoring in $\cB$, with the $A$-action given on the $A$-module factor (the first tensor factor) only. Second, we can obtain a left action of $\cM$ on $\cV$ as follows: Given a left $A$-module $(V, \triangleright_V)$ and a left $B$-module $(W, \triangleright_W)$, we can define a left $A$-module structure on $W\otimes V$ using the $A$-action 
\begin{align}
\triangleright_{W\otimes V}&=(\triangleright_W\otimes \triangleright_V)(\ide_B\otimes \Psi_{A,W}\otimes \ide_V)(\delta_A\otimes \ide_{W\otimes V}).
\end{align}
We now claim that the left $\cZ_{\cB}(\cM)$-modules $\cHom_{\cM\text{--}\cM}^{\cB}(\cM,\cV)$ and $\lmod{A}(\lcomod{B}(\cB))$ are equivalent. Indeed, a $\cM$-bimodule functor $\phi$ is uniquely determined by the image $V=\phi(\one)$ of $\one$, together with a natural isomorphism $c\colon V\otimes \rF\natisomorph \rF\otimes V$ satisfying $\otimes$-compatibility, and compatibility with the augmentation, from Definition \ref{Isomgeneral}. Proceeding similarly to \ref{alternativedescription}, we see that there is an equivalence of $\cB$-balanced bimodules 
$$\cHom_{\cM\text{--}\cM}^{\cB}(\cM,\cV)\simeq \Isom_{\cB}(\cV\triangleleft \rF,\rF\triangleright \cV)=\cV_{\cB}.$$
We now proceed to show that $\Isom_{\cB}(\cV\triangleleft \rF,\rF\triangleright \cV)\simeq \lmod{A}(\lcomod{B}(\cB))$ with a similar argument as used in Proposition \ref{YDprop}.
The morphism $c_{B}$, where $B$ is the regular $B$-module, can be pre-composed with $\ide_V\otimes 1$, where $1$ is the unit of $B$, to give a morphism $\delta\colon V\rightarrow B\otimes V$ in $\cB$. Tensor compatibility in $\cM$ implies that $\delta$ makes $V$ a $B$-comodule. Now, using that $\phi$ is a $\cB$-balanced natural transformation, we find that $V$ is indeed an object in $\lmod{A}(\lcomod{B}(\cB))$. This follows using the fact that $\delta$ is a morphism of $A$-modules, with the induced $A$-module structure on $B\otimes V$.
Up to isomorphism, $\phi$ can be recovered from $\delta$ using that for any $B$-module $W$, the $B$-action is a morphism of $B$-modules $B\otimes W^{\triv}\to W$. Now, using $\otimes$-compatibility, and that $c_{W^{\triv}}=\rT(\Psi_{V,W})$ we see that $c_W=(\triangleright_W\otimes \ide_V)(\ide_B\otimes \Psi_{V,W})(\delta\otimes \ide_W)$.
\end{proof}

We observe that the underlying $B$-comodule structure in the above categorical action of the center is given by the monoidal structure in $\lcomod{B}(\cB)$, where the $A$-action is a generalization of the induced action (which is recovered in the case where $H=\Bbbk$ was simply a field and $\cB=\Vect$). Hence the result of Theorem \ref{mainthm} can be interpreted as a natural generalization of the induced action to comodule algebras, which requires the use of the monoidal center as the category $\lcomod{B}(\cB)$ does not act on $\lmod{A}(\lcomod{B}(\cB))$ in a similar, non-trivial way. 

\begin{example}\label{Hopfexample}$~$
\begin{enumerate}
\item[(i)]
Note that $B$ is always a comodule algebra $B^{\op{reg}}$ with respect to the regular coaction given by $\Delta$. This case gives the category $\lmod{B}(\lcomod{B}(\cB))$, which is also known as the category $\leftexpsub{B}{B}{\mathbf{H}}(\cB)$ of \emph{Hopf modules} over $B$ in $\cB$. A left Hopf module is an object $V$ in $\cB$ which is both  a left $B$-module, with action $\triangleright$, and left a $B$-comodule, with coaction $\delta$, such that the structures satisfy the compatibility condition 
\begin{align}
\delta \triangleright&= (m_B\otimes \triangleright)(\ide_B\otimes \Psi_{B,B}\otimes \ide_V)(\Delta\otimes \delta),&&\Leftrightarrow&& \vcenter{\hbox{
\begingroup%
  \makeatletter%
  \providecommand\color[2][]{%
    \errmessage{(Inkscape) Color is used for the text in Inkscape, but the package 'color.sty' is not loaded}%
    \renewcommand\color[2][]{}%
  }%
  \providecommand\transparent[1]{%
    \errmessage{(Inkscape) Transparency is used (non-zero) for the text in Inkscape, but the package 'transparent.sty' is not loaded}%
    \renewcommand\transparent[1]{}%
  }%
  \providecommand\rotatebox[2]{#2}%
  \ifx\svgwidth\undefined%
    \setlength{\unitlength}{59.04203141bp}%
    \ifx\svgscale\undefined%
      \relax%
    \else%
      \setlength{\unitlength}{\unitlength * \real{\svgscale}}%
    \fi%
  \else%
    \setlength{\unitlength}{\svgwidth}%
  \fi%
  \global\let\svgwidth\undefined%
  \global\let\svgscale\undefined%
  \makeatother%
  \begin{picture}(1,0.52112539)%
    \put(0.27628689,0.26289578){\color[rgb]{0,0,0}\makebox(0,0)[lb]{\smash{ }}}%
    \put(0,0){\includegraphics[width=\unitlength,page=1]{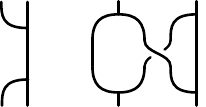}}%
    \put(0.1968943,0.22893528){\color[rgb]{0,0,0}\makebox(0,0)[lb]{\smash{$=$}}}%
    \put(0.7939266,0.19711312){\color[rgb]{0,0,0}\makebox(0,0)[lb]{\smash{$~$}}}%
  \end{picture}%
\endgroup%
}}.
\end{align}
 In \cite{Lau}, the category of Hopf modules is described by a general categorical construction, called the \emph{Hopf center}. The result of a categorical action of the monoidal (or Drinfeld center) on the Hopf center is hence a special case of Theorem \ref{mainthm}. Such a result was first proved in \cite{Lu} in the case of finite-dimensional Hopf algebras over a field $\Bbbk$.
\item[(ii)]We can also view $B$ as a comodule algebra $B^{\triv}$ with respect to the trivial coaction (via the counit $\varepsilon$) on itself. In this case, the category $\lmod{B^{\triv}}(\lmod{B}(\cB))$ consist of simultaneous left $B$-modules and comodules such that the structures commute, i.e. an object $(V,\triangleright,\delta)$ with action $\triangleright$ and coaction $\delta$ satisfying
\begin{align}
\delta \triangleright&=(\ide_B\otimes \triangleright)(\Psi_{B,B}\otimes \ide_V)(\ide_B\otimes \delta).
\end{align}
In this case, there is an action of $\lcomod{B}(\cB)$ on $\lmod{B^{\triv}}(\lmod{B}(\cB))$ and the action of the center factors through this action via the forgetful functor $\lYD{B}(\cB)\to \lcomod{B}(\cB)$.
\end{enumerate}
\end{example}

We can also provide a version of Corollary \ref{mainthm} for module algebras.

\begin{corollary}\label{mainthm2}
Let $B$ a bialgebra in $\cB$, $\cM=\rmod{B}(\cB)$ and $A$ an algebra in $\rmod{B}(\cB)$. There is a left categorical action of the center $\cZ_{\cB}(\cM)\simeq \rYD{B}(\cB)$:
\begin{align*}
\triangleright\colon \rYD{B}(\cB)\boxtimes \lmod{A}(\rmod{B}(\cB))&\longrightarrow \lmod{A}(\rmod{B}(\cB)).
\end{align*}
An object $(V,a_V,\delta_V)$ of $\rYD{B}(\cB)$ acts on an object $(W,a_W,b_W)$ of $\lmod{A}(\rmod{B}(\cB))$, where $a_W\colon A\otimes W\to W$ and $b_W\colon W\otimes B\to W$ are $A$-, respectively $B$-actions, by
\begin{gather}
(V,a_V,\delta_V)\triangleright (W,a_W,b_W):=(V\otimes W,a_{V\triangleright W},b_{V\triangleright W}),\\
a_{V\triangleright W}:=(\ide_{V}\otimes a_W)(\ide_{V}\otimes b_A\otimes \ide_W)(\Psi_{A,V}\otimes \ide_{B\otimes W})(\ide_A\otimes \delta_V\otimes \ide_W),\\
b_{V\triangleright W}:=(b_V\otimes b_W)(\ide_V\otimes \Psi_{W,B}\otimes \ide_B)(\ide_{V\otimes W}\otimes \Delta_B).
\end{gather}
\end{corollary}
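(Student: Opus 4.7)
The plan is to mirror the proof of Corollary~\ref{mainthm} in the right-module setting, interchanging left/right and coaction/action throughout. First I would set $\cM := \rmod{B}(\cB)$, which by Example~\ref{bialgebraexpl} is $\overline{\cB}$-augmented, and establish the right-handed analogue of Proposition~\ref{YDprop}, namely $\cZ_{\cB}(\cM) \simeq \rYD{B}(\cB)$. From a half-braiding $c\colon V\otimes\rF\natisomorph \rF\otimes V$ representing an object of $\Isom_\cB$, the right $B$-action is $b_V := (\varepsilon\otimes\ide_V)c_B\colon V\otimes B\to V$, exactly as in the proof for $\rcomod{B}(\cB)$ given earlier, and the right $B$-coaction is extracted dually; the Yetter--Drinfeld compatibility then follows from $c_B$ being a morphism of right $B$-modules.

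Second, I would endow $\cV := \lmod{A}(\cB)$ with a $\cB$-balanced $\cM$-bimodule structure. The right action $U\triangleleft W := U\otimes W$ carries the $A$-action on the first factor only, while the left action $W \triangleright U := W\otimes U$ uses a twisted left $A$-action built from $A$'s right $B$-module algebra structure $b_A$ together with $W$'s right $B$-action. This is the analogue of the formula
\[
\triangleright_{W\otimes V} = (\triangleright_W\otimes \triangleright_V)(\ide_B\otimes \Psi_{A,W}\otimes \ide_V)(\delta_A\otimes \ide_{W\otimes V})
\]
from Corollary~\ref{mainthm}, with $\delta_A$ replaced by a morphism built from $b_A$ and the right $B$-action of $W$; its associativity reduces to the right $B$-module algebra axioms for $A$, and the $\cB$-balancing is inherited from the $\cB$-balancing of the regular $\cM$-bimodule via Lemma~\ref{regularbalanced}.

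Third, following Proposition~\ref{alternativedescription} and the argument in the proof of Corollary~\ref{mainthm}, I would identify $\cHom^{\cB}_{\cM\text{--}\cM}(\cM,\cV) \simeq \lmod{A}(\rmod{B}(\cB))$. A $\cB$-balanced bimodule morphism $\phi\colon \cM\to\cV$ is uniquely determined by $V = \phi(\one)\in\cV$ together with a half-braiding $c\colon V\otimes\rF\natisomorph \rF\otimes V$ satisfying tensor and augmentation compatibilities; one reads off the right $B$-action $b_V = (\varepsilon\otimes\ide_V)c_B$ on $V$, whose commutativity with the left $A$-action is forced by $c_B$ being a morphism in $\cV$. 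Conversely, $c_W$ for any $W\in\cM$ is reconstructed from $b_V$ using $\otimes$-compatibility, the augmentation condition for the trivial modules $\rT(X) \in \cM$, and the $\cM$-linearity of $\triangleleft_W\colon W^{\triv}\otimes B\to W$. Lemma~\ref{centermodules} then transports the tautological $\cZ_\cB(\cM)$-action on $\cHom^{\cB}_{\cM\text{--}\cM}(\cM,\cV)$ to a $\rYD{B}(\cB)$-action on $\lmod{A}(\rmod{B}(\cB))$, and the explicit formulas for $a_{V\triangleright W}$ and $b_{V\triangleright W}$ are obtained by tracing pre-composition by the endofunctor $\Phi^V(M) = M\otimes V$ through these identifications.

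The hard part will be writing down the twisted left $A$-action on $W\otimes U$ correctly so that, after all identifications, pre-composition by $\Phi^V$ reproduces exactly the stated formula for $a_{V\triangleright W}$; the remaining verifications are structurally the same as in the proof of Corollary~\ref{mainthm}, with left/right and action/coaction exchanged.
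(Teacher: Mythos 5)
Your overall strategy --- realize $\lmod{A}(\rmod{B}(\cB))$ as $\cHom^{\cB}_{\cM\text{--}\cM}(\cM,\cV)$ for a suitable $\cB$-balanced bimodule structure on $\cV=\lmod{A}(\cB)$ and then invoke Lemma \ref{centermodules} --- is exactly the paper's, but your second step cannot be carried out as described, and it is precisely the point you flag as ``the hard part.'' With your choice $\cM=\rmod{B}(\cB)$, an object $W$ of $\cM$ carries only a right $B$-action $b_W\colon W\otimes B\to W$, and $A$ carries only a right $B$-action $b_A$; there is no morphism built from two \emph{actions} that can play the role of $\delta_A$ in the formula from Corollary \ref{mainthm}. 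To move $A$ past $W$ one must \emph{produce} a copy of $B$ (to be absorbed by $b_A$), i.e.\ one needs a coaction on the objects of $\cM$. The paper's proof therefore takes $\cM=\rcomod{B}(\cB)$, whose relative center is identified with $\rYD{B}(\cB)$ in the lemma immediately preceding the corollary: for $V\in\rcomod{B}(\cB)$ the coaction $\delta_V$ is available, and the displayed formula $a_{V\triangleright W}$ itself, read with $V$ merely a right $B$-comodule, already defines the left $\cM$-action on $\cV=\lmod{A}(\cB)$; the right action is taken to be the trivial one, and the rest goes through as in Corollary \ref{mainthm}.

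A secondary, related slip: for $\cM=\rmod{B}(\cB)$ the half-braiding $c$ of a central object yields, by precomposition with $\ide_V\otimes 1$, a \emph{left} $B$-coaction on $V$, so your identification of $\cZ_{\cB}(\cM)$ with $\rYD{B}(\cB)$ together with the recipe ``action $=(\varepsilon\otimes\ide_V)c_B$, coaction extracted dually'' is internally inconsistent --- the postcomposition recipe $(\varepsilon\otimes\ide_V)c_B$ is the one appropriate to $\cM=\rcomod{B}(\cB)$, where the \emph{action} is the datum extracted from the half-braiding and the coaction is already part of being an object of $\cM$. Replacing $\rmod{B}(\cB)$ by $\rcomod{B}(\cB)$ throughout removes both obstructions and brings your argument in line with the paper's.
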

\begin{proof}
First note that $a_{V\triangleright W}$ can be used to define a left action of $\cM=\rcomod{B}(\cB)$ on $\cV=\lmod{A}(\cB)$. Similarly to the proof of Corollary \ref{mainthm}, one can show --- by making $\cV$ a $\cB$-balanced $\cM$-bimodule by using the trivial $\cM$-action on the right --- that $\cHom_{\cM\text{--}\cM}(\cM,\cV)$ is equivalent to $\lmod{A}(\rmod{B}(\cB))$. This way, the statement is a corollary of Lemma \ref{centermodules}. For a direct proof, see \cite{Lau2}*{Theorem 2.4}.
\end{proof}

\subsection{Applications to  Braided Drinfeld Doubles}\label{drinapp}

In this section, apply the results of this paper to categories of modules over braided Drinfeld doubles, and explain connections to the construction of comodule algebras and $2$-cocycles over these double from \cite{Lau2}.
For this, let $\cB=\lmod{H}$ for a quasi-triangular $\Bbbk$-Hopf algebra $H$ throughout this section. 

To define the braided Drinfeld double $\Drin_H(C,B)$ (due to \cites{Maj1, Maj3}, where this construction is called the  \emph{double bosonization}), we let $B$ and $C$ be braided bialgebras $\cB$ with a non-degenerate Hopf algebra pairing $\ev\colon C\otimes B\to \Bbbk$. Then $\Drin_H(C,B)$ is a Hopf algebra such that there is a fully faithful monoidal functor
$$\lYD{B}(\cB)\longrightarrow \lmod{\Drin_H(C,B)}.$$
It induces an equivalence $\lYD{B}(\cB)\simeq \lmod{\Drin_H(C,B)}^{C\mathrm{-lf}}$ to the category of modules with \emph{locally finite} $C$-action, i.e. modules $V$ where $\dim(C\triangleright v)<\infty$ for any $v\in V$.
In the case where $H=\Bbbk$, and $B$ is a finite-dimensional Hopf algebra with $C=B^*$ the dual Hopf algebra, this recovers a version of the \emph{quantum double} $\Drin(B)$ of \cite{Dri}, and $\lYD{B}\simeq \lmod{\Drin(B)}$ is an equivalence of braided monoidal categories (see e.g. \cite{Kas}*{Section IX.5} for a direct proof).

In the finite-dimensional case, Theorem \ref{bimoduletheorem} and Corollary \ref{centermorita} provide a new characterization of categorical modules. For this, we use the superscript ${\mathrm{fd}}$ to restrict to finite-dimensional objects.

\begin{corollary}\label{drincor}
If $C,B$ are dually paired Hopf algebras in $\cB=\lmod{H}$, then there is a bifunctor from categorical modules over $\lrmod{B}{B}(\cB)$ to categorical modules over $\lmod{\Drin_H(C,B)}^{C\mathrm{-lf}}$.

If $B$ is a finite-dimensional Hopf algebra in $\cB$ and $B^*$ its dual, then this bifunctor is a biequivalence, i.e. the categories $\lmod{\Drin_H(B^*,B)}^{\mathrm{fd}}$ and $\lrfmod{B}{B}(\cB)$ are categorically Morita equivalent. 
\end{corollary}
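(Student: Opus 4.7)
The proof assembles three equivalences already established in the paper and then invokes the general module theory of Section \ref{Moritasection}. Set $\cM=\lmod{B}(\cB)$, which by Example \ref{bialgebraexpl} is a $\cB$-augmented monoidal category. By Example \ref{tensorBBmod} (built on Proposition \ref{Bmodreltensor}), the relative tensor product $\cM\boxtimes_{\cB}\cM^{\oop}$ is equivalent as a monoidal category to $\lrmod{B}{B}(\cB)$. By Proposition \ref{YDprop}, the relative center $\cZ_{\cB}(\cM)$ is equivalent as a braided monoidal category to $\lYD{B}(\cB)$. The final ingredient is the equivalence $\lYD{B}(\cB)\simeq \lmod{\Drin_H(C,B)}^{C\mathrm{-lf}}$ recalled at the start of Section \ref{drinapp}.

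For the first assertion, the desired bifunctor is obtained by composing the bifunctor
\[
\lmod{\cM\boxtimes_{\cB}\cM^{\oop}} \longrightarrow \lmod{\cZ_{\cB}(\cM)}
\]
(given by the biequivalence of Theorem \ref{bimoduletheorem} followed by the $2$-functor $\cHom^{\cB}_{\cM\text{--}\cM}(\cM,-)$ from Lemma \ref{centermodules}) with the equivalences of monoidal categories listed above. No finiteness hypothesis is required at this stage, since all the structural results used are formulated in $\Cat^c$.

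For the second assertion, the strategy is to apply the Morita equivalence statement of Corollary \ref{centermorita} to $\cM^{\mathrm{fd}}$ and $\cB^{\mathrm{fd}}$. This reduces to verifying that, when $B$ is a finite-dimensional Hopf algebra in $\cB$, the categories $\lmod{B}(\cB)^{\mathrm{fd}}$ and $\cB^{\mathrm{fd}}$ are finite $\Bbbk$-multitensor categories, and that under the chain of equivalences of the first assertion the finite-dimensional objects on each side correspond. Finite-dimensionality of $B$ ensures that any finite-dimensional $\Drin_H(B^*,B)$-module has locally finite $B^*$-action automatically, so the subcategory $\lmod{\Drin_H(B^*,B)}^{\mathrm{fd}}$ sits inside $\lmod{\Drin_H(B^*,B)}^{B^*\mathrm{-lf}}$ and, under $\cZ_{\cB}(\cM)\simeq \lYD{B}(\cB)$, matches the finite-dimensional objects of the relative center. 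Likewise, the monoidal equivalence $\cM\boxtimes_{\cB}\cM^{\oop}\simeq \lrmod{B}{B}(\cB)$ restricts to finite-dimensional objects to give $(\cM\boxtimes_{\cB}\cM^{\oop})^{\mathrm{fd}}\simeq \lrfmod{B}{B}(\cB)$. Corollary \ref{centermorita} then upgrades the bifunctor to a biequivalence of $2$-categories of modules, yielding the categorical Morita equivalence.

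The main obstacle I anticipate is the technical verification that the hypotheses of Corollary \ref{centermorita} hold in the present setup — in particular, checking that the relevant finite-dimensional subcategories genuinely form finite multitensor categories and that the restricted bifunctor still agrees with the functor $\cHom^{\cB}_{\cM^{\mathrm{fd}}\text{--}\cM^{\mathrm{fd}}}(\cM^{\mathrm{fd}},-)$ appearing in Corollary \ref{centermorita}. All remaining steps consist of composing previously established equivalences and invoking the naturality of the constructions developed in Sections \ref{balancedbimodsect}--\ref{Moritasection}.
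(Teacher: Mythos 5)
Your proposal is correct and follows exactly the route the paper intends: the corollary is stated without an explicit proof precisely because it is the composite of the bifunctor $\lmod{\cM\boxtimes_{\cB}\cM^{\oop}}\to\lmod{\cZ_{\cB}(\cM)}$ from Section \ref{Moritasection} with the identifications $\cM\boxtimes_{\cB}\cM^{\oop}\simeq\lrmod{B}{B}(\cB)$ (Proposition \ref{Bmodreltensor}, Example \ref{tensorBBmod}) and $\cZ_{\cB}(\cM)\simeq\lYD{B}(\cB)\simeq\lmod{\Drin_H(C,B)}^{C\mathrm{-lf}}$ (Proposition \ref{YDprop}), with Corollary \ref{centermorita} supplying the finite-dimensional upgrade. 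Your flagged concern about verifying the finite-multitensor hypotheses is legitimate (the paper is silent on it too), but it does not change the argument's structure.
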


We can further described $\lrmod{B}{B}(\cB)$ as modules over a Hopf algebra.

\begin{definition} \label{BBHdef}Let $B$ be a Hopf algebra in $\cB$. We can define a Hopf algebra structure on $B_+\otimes H\otimes B_-$, where $B_+$ and $B_-$ are two copies of the algebra $B$. The algebra structure is given by requiring that $B_+$, $B_-$, and $H$ are subalgebras, such that
\begin{gather}
bd=(R^{(2)}\triangleright d)(R^{(1)}\triangleright b),\qquad hb=(h_{(1)}\triangleright b)h_{(2)},\qquad h d=(h_{(1)}\triangleright d)h_{(2)},
\end{gather}
for $b\in B_+$, $d\in B_-$, and $h\in H$. The coproduct is given on generators by 
\begin{gather}
\Delta(h)=\Delta_H(h), \quad \Delta(b)=b_{(1)}R^{(2)}\otimes (R^{(1)}\triangleright b_{(2)}), \quad \Delta(d)=R^{-(1)}d_{(2)}\otimes (R^{-(2)}\triangleright d_{(1)}).
\end{gather}
Here $b_{(1)}\otimes b_{(2)}$ and $d_{(1)}\otimes d_{(2)}$ denote the coproduct of $B$.
The counit is given by $\varepsilon_B\otimes \varepsilon_H\otimes \varepsilon_B$. The antipode $S$ is given by
\begin{align}S(h)=S_H(h), \qquad
S(b)=S(R^{(2)}(R^{(1)}\triangleright S_B(b)), \qquad S(d)=S(R^{-(1)})(R^{(-2)}\triangleright S^{-1}_B(d)).
\end{align}
The resulting Hopf algebra is denoted by $(B\otimes \leftexp{\cop}{B}) \rtimes H$.
\end{definition}

He note that if $B$ is a self-dual Hopf algebra in $\cB$ (for example, a \emph{Nichols algebra} \cite{AS}), then $(B\otimes \leftexp{\cop}{B}) \rtimes H$ has the same coalgebra structure as $\Drin_H(C,B)$, but the algebra structure is easier.

\begin{proposition}\label{BBHprop}
Let $B$ be a Hopf algebra in $\cB$. Then there is an equivalence of monoidal categories 
$\lrmod{B}{B}(\cB)\simeq \lmod{(B\otimes \leftexp{\cop}{B}) \rtimes H}$.
\end{proposition}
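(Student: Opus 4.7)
The plan is to interpret the Hopf algebra $(B\otimes \leftexp{\cop}{B})\rtimes H$ as the Radford biproduct (bosonization) of the braided tensor product algebra $B \otimesop \leftexp{\cop}{B}$ in $\cB = \lmod{H}$, and then to apply the bosonization--module equivalence. First, I would observe that the cross-commutation relation $bd = (R^{(2)}\triangleright d)(R^{(1)}\triangleright b)$ of Definition \ref{BBHdef} is precisely the braided commutation relation that makes $B \otimesop \leftexp{\cop}{B}$ into an algebra object in $\cB$, with the braiding supplied by the R-matrix of $H$. Combined with the remaining relations $hb = (h_{(1)}\triangleright b)h_{(2)}$, $hd = (h_{(1)}\triangleright d)h_{(2)}$ and the given coproduct and antipode formulas, a direct comparison identifies the entire Hopf algebra with the bosonization of the braided bialgebra $B \otimesop \leftexp{\cop}{B}$, whose coalgebra structure uses $\Delta_B$ on $B_+$ and the braided coopposite on $\leftexp{\cop}{B}$.

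Second, I would invoke the classical bosonization--module equivalence of Majid: for any bialgebra $A$ in $\cB = \lmod{H}$, there is a monoidal equivalence $\lmod{A\rtimes H} \simeq \lmod{A}(\cB)$, where restriction along $H \hookrightarrow A\rtimes H$ supplies the underlying $\cB$-structure. Applied to $A = B \otimesop \leftexp{\cop}{B}$, this gives
\[
\lmod{(B\otimes \leftexp{\cop}{B})\rtimes H} \;\simeq\; \lmod{B \otimesop \leftexp{\cop}{B}}(\cB).
\]
It then remains to identify the right-hand side with $\lrmod{B}{B}(\cB)$ as monoidal categories. A left $B \otimesop \leftexp{\cop}{B}$-module in $\cB$ decomposes into a left $B_+$-action and a commuting (in the braided sense) left $\leftexp{\cop}{B}$-action; using the antipode and braiding of $\cB$ as in Lemma \ref{oppositeaugmented}, the latter datum is equivalent to a right $B$-action in $\cB$. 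The tensor product description of Example \ref{tensorBBmod} then matches the Hopf algebra action: $\Delta_B$ governs the tensor product of left $B$-actions while its braided coopposite governs the tensor product of right $B$-actions, reproducing, after bosonization, exactly the coproduct formulas for $b\in B_+$ and $d\in B_-$ of Definition \ref{BBHdef}.

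The main obstacle is the bookkeeping step: translating a left $\leftexp{\cop}{B}$-action into a right $B$-action via antipode and inverse braiding in a way compatible with the tensor product, and verifying that the two resulting tensor product formulas agree on the nose. Once the bosonization identification of the first step is in place, these verifications reduce to standard R-matrix manipulations and the axioms of a quasi-triangular Hopf algebra.
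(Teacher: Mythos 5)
Your proposal is correct and follows essentially the same route as the paper: the paper's proof simply writes down the action $d\,\ov{\triangleright}\,v=(R^{-(1)}\triangleright v)\triangleleft S^{-1}_B(R^{-(2)}\triangleright d)$ on an object of $\lrmod{B}{B}(\cB)$, which is exactly your conversion of the right $B$-action into a left $B_-$-action via the functor $\Phi^{-1}$ of Lemma \ref{oppositeaugmented}. Your bosonization framing makes explicit the organizing principle that the paper leaves implicit, but the mathematical content is the same.
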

\begin{proof}
Assume given an object $V$ in $\lrmod{B}{B}(\cB)$ with left actions of $B$ and $H$ denoted by $\triangleright$ and right $B$-action $\triangleleft$. Then a left  action $\ov{\triangleright}$ of $(B\otimes \leftexp{\cop}{B}) \rtimes H$ is defined by 
\begin{gather*}
h\,\ov{\triangleright}\,v=h\triangleright v,\qquad
b\,\ov{\triangleright}\,v=b\triangleright v, \qquad d\,\ov{\triangleright}\,v=(R^{-(1)}\triangleright v)\triangleleft S^{-1}_B(R^{-(2)}\triangleright d),
\end{gather*}
for $v\in V$, $b\in B_+$, $c\in B_-$, and $h\in H$. We note that the right action of $B_-$ is obtained from the left $B$ action through the functor $\Phi^{-1}$ in Lemma \ref{oppositeaugmented}.
\end{proof}

Further, the results of this paper relate to the constructions of comodule algebras over the braided Drinfeld double in \cite{Lau2}.
Let $A$ be a left $B$-comodule algebra in $\cB=\lmod{H}$. It was shown in \cite{Lau2}*{Corollary~3.8} that the braided crossed product $A\rtimes \leftexp{\cop}{C}\rtimes H$ is a $\Drin_H(C,B)$-comodule algebra. Therefore, there is a categorical action of $\lmod{\Drin_H(C,B)}$ on $\lmod{A\rtimes \leftexp{\cop}{C}\rtimes H}$, such that the fully faithful functor (obtained by using the pairing $\ev$)
$$\lmod{A}(\lcomod{B}(\cB))\longrightarrow\lmod{A\rtimes \leftexp{\cop}{C}\rtimes H}$$
is one of $\cZ_{\cB}(\cM)$-modules. The action of the former is given by Corollary \ref{mainthm}, and the action on the latter is given by restricting the $\lmod{\Drin_H(C,B})$-action to the subcategory $\lYD{B}(\cB)\simeq \cZ_{\cB}(\cM)$.

As examples for the braided crossed product, we may consider the braided Heisenberg double $\Heis_H(C,B)$ or the twisted tensor product algebra $B\otimes_{\Psi^{-1}}C\rtimes H$ (see \cite{Lau2}*{Section 3.3} for details).

Related to this, there exists two maps of spaces of left $2$-cocycles in non-abelian cohomology \cite{Lau2}*{Section 4.4}:
\begin{align*}
\op{Ind}_B\colon &H_H^2(B,\Bbbk)\longrightarrow H^2(\Drin_{H}(C,B),\Bbbk), &
\op{Ind}_C\colon &H_H^2(\leftexp{\cop}{C},\Bbbk)\longrightarrow H^2(\Drin_{H}(C,B),\Bbbk).
\end{align*}
It follows that if an algebra $A$ is a $2$-cocycle twist of the bialgebra $B$ in $\cB$ by a cocycle $\tau$, then $A\rtimes \leftexp{\cop}{C}\rtimes H$ is a $2$-cocycle twist by $\op{Ind}_B(\tau)$, cf. \cite{Lau}*{Proposition 3.8.4}. As a special case, $\Heis_H(C,B)$ is a $2$-cocycle twist of $\Drin_H(C,B)$ by the $2$-cocycle $\op{Ind}_B(\triv)$ for the trivial $2$-cocycle of $B$. This result generalizes the theorem from \cite{Lu} for finite-dimensional $\Bbbk$-Hopf algebras.

\subsection{Applications to Generic Quantum Groups}\label{qgroupssect}

The construction of $\cZ_{\cB}(\cM)$ is motivated by the representation theory of quantum groups $U_q(\mathfrak{g})$ as discussed in the introduction, see Section \ref{motivation}. We will discuss first applications to this setup. A more detailed study of representation-theoretic applications in this important example will appear elsewhere.

For this section, denote by $\mF$ the field $\Bbbk(q)$ for a generic variable $q$ over $\Bbbk$.
Following Lusztig \cite{Lus} fix a \emph{Cartan datum} $I$. That is, a finite index set $I$ together with a symmetric bilinear form $\cdot$ on the free abelian group $L:=\mZ \langle I\rangle =\mZ\langle g_i\mid i\in I\rangle$, such that $i\cdot i$ is even, and 
$a_{ij}:=2\tfrac{i\cdot j}{i\cdot i}\in \mZ_{\leq 0},$ for all $i\neq j.$
We can use this datum to define a dual $R$-matrix on the category $\lcomod{L}$ by
$
R(g_i,g_j)=q^{i\cdot j}.
$
The braided monoidal category thus obtained is denoted by $\lcomod{L}_q$. 

We now define $E:=\mF\langle e_i\mid i\in I\rangle$ to be the left $L$-comodule where $\delta(e_i)=g_i\otimes e_i$, and $F:=\mF\langle f_i\mid i\in I\rangle$ the left dual $L$-comodule, i.e. $\delta(f_i)=g_{i}^{-1}\otimes f_i$. A duality pairing can be given by
\begin{align}\langle e_i,f_j\rangle=\frac{\delta_{i,j}}{q_i-q_i^{-1}},\end{align}
where $q_i:=q^{i\cdot i/2}$.
Using the dual $R$-matrix, $E$ and $F$ become dually paired Yetter--Drinfeld modules over the lattice $L$. The $L$-actions are given by
\begin{align}
g_i\cdot e_j&= q^{i\cdot j} e_j, &g_i\cdot f_j&= q^{-i\cdot j} f_j.
\end{align}
Hence, we can consider the Nichols algebras (or Nichols--Woronowicz algebras) $U_q(\mathfrak{n}_+):=\cB(E)$, $U_q(\mathfrak{n}_-):=\cB(F)$. See e.g \cite{AS} for details on this construction. We note that $U_q(\mathfrak{n}_+)$ and $U_q(\mathfrak{n}_-)$ are dually paired braided Hopf algebras in $\lcomod{L}_q$ which are primitively generated. That is,
\begin{align}
\Delta (e_i)&=e_i\otimes 1+1\otimes e_i, &\Delta (f_i)&=f_i\otimes 1+1\otimes f_i.
\end{align}
The pairing of the braided Hopf algebra is the unique extension $\ev$ of the pairing of $E$ and $F$ to one of braided Hopf algebras (see \cite{Lus}*{Proposition 1.2.3}).
It is a theorem of \cite{Maj3} that there exists an isomorphism of Hopf algebras between $\Drin_H(U_q(\mathfrak{n}_-),U_q(\mathfrak{n}_+))$ and $U_q(\fr{g})$, where $\fr{g}$ denotes the semi-simple Lie algebra corresponding to the Cartan datum $I$, and $H=U_q(\fr{t})$ is a group algebra generated by $K_i$ for $i\in I$ (cf. also \cite{Lau2}*{Theorem 3.25}).\footnote{In \cite{Maj3} and \cite{Lau2}, $u_q(\fr{n}_-)$ is the \emph{left} dual of $u_q(\fr{n}_+)$. In the present paper, we reverse the roles so that $u_q(\fr{n}_+)$ acts locally finitely in Theorem \ref{quantumcenter}.}

Here, we use the following version of the quantum group $U_q(\mathfrak{g})$. The algebra $U_q(\mathfrak{g})$ is generated by $E_i,F_i,K_i^{\pm 1}$, for $i\in I$, subject to relations
\begin{gather}
K_iE_j=q^{i\cdot j}E_jK_i, \quad K_iF_j=q^{-i\cdot j}F_jK_i,\quad K_i^{\pm 1}K_i^{\mp 1}=1,\quad [E_i,F_j]=\delta_{i,j}\frac{K_i-K_i^{-1}}{q_i-q_i^{-1}},\\
\sum_{k=0}^{1-a_{ij}}(-1)^k\binom{1-a_{ij}}{k}_{q_i}E^{1-a_{ij}-k}_i E_jE_i^k=0,\qquad \sum_{k=0}^{1-a_{ij}}(-1)^k\binom{1-a_{ij}}{k}_{q_i}F^{1-a_{ij}-k}_i F_jF_i^k=0,
\label{qserrerel}
\end{gather}
for $i\neq j\in I$. Here, $\binom{n}{m}_q=
\frac{[n]_q!}{[m]_q![n-m]_q!}$, for $n\geq m$, where $[n]_q=\frac{q^n-q^{-n}}{q-q^{-1}}$. We use the coproduct 
\begin{gather}
\Delta(K_i)=K_i\otimes K_i, \qquad \Delta(E_i)=E_i\otimes K_i+1\otimes E_i,\qquad \Delta(F_i)=F_i\otimes 1+K^{-1}_i\otimes F_i.
\end{gather}
This version of the quantum group appears e.g. in \cite{CP1}*{Section~9.1}. It is the coopposite Hopf algebra of the quantum group of \cite{J}*{Chapter~4}.

\begin{theorem}\label{quantumcenter}
Let $\cB=\lcomod{L}_q$ and $\cM=\lmod{U_q(\mathfrak{n}_-)}(\cB)$. Then the category $\cZ_{\cB}(\cM)$ is equivalent to the full subcategory of $U_q(\fr{g})$-modules $V$ s.t.
\begin{enumerate}
\item[(i)] $V$ is a \emph{weight module}, i.e. $V=\bigoplus_{i\in L}V_i$, where $K_j\cdot v_i=q^{i\cdot j}v_i$ for any $v_i\in V_i$;
\item[(ii)] $V$ is \emph{locally finite} over $U_q(\fr{n}_+)$, i.e. $\dim(U_q(\fr{n}_+)\cdot v)<\infty$ for any $v\in V$.
\end{enumerate}
That is, $V$ is a $U_q(\mathfrak{n}_+)$-locally finite weight module over $\Ug$. We denote the full subcategory of $\lmod{\Ug}$ of such modules by $\lmod{\Ug}^{U_q(\fr{n}_+)\mathrm{-lfw}}$.
\end{theorem}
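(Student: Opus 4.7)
My plan is to combine three results already in the paper with Majid's identification of $\Ug$ as a braided Drinfeld double from \cite{Maj3} (also recorded in \cite{Lau2}*{Theorem 3.25}). The first reduces the relative center to a category of Yetter--Drinfeld modules; the second reinterprets that category as locally finite modules over a braided Drinfeld double; the third identifies that double with $\Ug$.

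I would begin by reinterpreting $\cB=\lcomod{L}_q$ as $\lmod{H}$ for the ``Cartan'' Hopf algebra $H=\mF[L]$, the group algebra of the root lattice (playing the role of $U_q(\fr{t})$). An $L$-comodule is a weight module in which the group-like $K_j\in H$ acts on the degree-$i$ component by the scalar $q^{\,i\cdot j}$, and the dual $R$-matrix $R(g_i,g_j)=q^{\,i\cdot j}$ on $\cB$ matches the (topological) quasi-triangular structure on weight modules. Under this identification, the Hopf algebras $U_q(\fr{n}_\pm)$, built as Nichols algebras on $E$ and $F$ in $\cB$, are the standard positive and negative nilpotent subalgebras, and the pairing $\langle e_i,f_j\rangle=\delta_{ij}/(q_i-q_i^{-1})$ extends uniquely to the non-degenerate braided Hopf pairing required by Corollary \ref{drincor}.

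Next, Proposition \ref{YDprop} applied to $\cM=\lmod{U_q(\fr{n}_-)}(\cB)$ yields an equivalence of braided monoidal categories
\[\cZ_{\cB}(\cM)\simeq\lYD{U_q(\fr{n}_-)}(\cB).\]
Since $U_q(\fr{n}_\pm)$ is locally finite-dimensional but not finite-dimensional, I would apply the general (not necessarily finite-dimensional) version of Corollary \ref{drincor} to pass to
\[\cZ_{\cB}(\cM)\simeq\lmod{\Drin_{H}(U_q(\fr{n}_+),U_q(\fr{n}_-))}^{U_q(\fr{n}_+)\mathrm{-lf}}.\]
Finally, invoking the Hopf algebra isomorphism $\Drin_H(U_q(\fr{n}_+),U_q(\fr{n}_-))\cong\Ug$ from \cite{Maj3}, the $H$-action inside the braided Drinfeld double becomes the action of the $K_i^{\pm 1}$'s in $\Ug$, so the built-in $H$-semisimplicity coming from $\cB$ is exactly the weight condition (i), while the $C=U_q(\fr{n}_+)$-local finiteness from Corollary \ref{drincor} is condition (ii); composing the equivalences gives the claimed description as $\lmod{\Ug}^{U_q(\fr{n}_+)\mathrm{-lfw}}$.

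The main obstacle is the first step: one must verify that $\cB=\lcomod{L}_q$ and the quasi-triangular module category $\lmod{H}$ genuinely agree as braided monoidal categories in $\Cat^c$, and that the $\cB$-augmented monoidal structure on $\cM$ (the data $\rF,\rT,\tau,\sigma$) carries over without ambiguity, so that the hypotheses of Proposition \ref{YDprop} and Corollary \ref{drincor} apply on the nose. Once this bookkeeping is settled, matching the generators and defining relations of $\Drin_H(U_q(\fr{n}_+),U_q(\fr{n}_-))$ against the Chevalley presentation of $\Ug$ is essentially the content of \cite{Maj3} and requires no further work.
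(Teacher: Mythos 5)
Your first step (Proposition \ref{YDprop} giving $\cZ_{\cB}(\cM)\simeq\lYD{U_q(\fr{n}_-)}(\cB)$) is exactly how the paper begins. The gap is in your second step. Corollary \ref{drincor} and the whole braided Drinfeld double machinery of Section \ref{drinapp} are set up under the standing hypothesis that $\cB=\lmod{H}$ for an \emph{honest} quasi-triangular Hopf algebra $H$ over the field. For generic $q$ there is no such $H$: the root lattice $L$ is infinite, so the would-be $R$-matrix $\sum_{i,j} q^{i\cdot j}\,g_i\otimes g_j$ is not an element of $\mF[L]\otimes\mF[L]$, and $\lcomod{L}_q$ is a proper braided subcategory of $\lmod{\mF[L]}$ (the weight modules), while $\lmod{\mF[L]}$ itself carries no braiding at all. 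So the identification "$\cB\simeq\lmod{H}$" that your whole second step rests on is false on the nose, and the equivalence $\lYD{B}(\cB)\simeq\lmod{\Drin_H(C,B)}^{C\mathrm{-lf}}$ cannot simply be cited; it would have to be re-proved in the coquasi-triangular/comodule (or weakly quasi-triangular dual pair) setting. This is not bookkeeping — it is precisely why the paper reserves the Drinfeld-double route for the root-of-unity case (Section \ref{smallqgroupssect}, where $K$ is finite and the $R$-matrix is an honest finite sum) and treats the generic case by hand. Note also that the weight condition (i) is not "built-in $H$-semisimplicity of $\lmod{H}$"; it is exactly the restriction to the image of $\lcomod{L}_q$, which your framing erases.

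What the paper does instead is inline the content you wanted to cite: starting from a Yetter--Drinfeld module $(V,\cdot,\delta)$ over $U_q(\fr{n}_-)$ in $\lcomod{L}_q$, it defines $K_i\cdot v=q^{i\cdot|v|}v$ from the $L$-grading, $F_i\cdot v=f_i\cdot v$ from the action, and $E_i\cdot v=q^{-i\cdot|v^{(0)}|}\ev(e_i\otimes v^{(-1)})\,v^{(0)}$ from the coaction paired against $e_i$, then checks the $\Ug$-relations directly. Local finiteness of the $U_q(\fr{n}_+)$-action is automatic because the coaction has finitely many terms, and essential surjectivity onto $\lmod{\Ug}^{U_q(\fr{n}_+)\mathrm{-lfw}}$ follows from non-degeneracy of the pairings (of $U_q(\fr{n}_\pm)$ and of $\Bbbk L$ with the group generated by the $K_i$, using that $q$ is generic). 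If you want to salvage your route, you would need to either verify the relevant results of Section \ref{drinapp} for coquasi-triangular $\cB$, or simply carry out this explicit construction — which is the paper's proof.
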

\begin{proof}
Recall that $\cZ_\cB(\cC)\simeq \lYD{U_q(\mathfrak{n}_-)}(\cB)$ by Proposition \ref{YDprop}. Given a Yetter--Drinfeld module over $U_q(\fr{n}_-)$ with action $f_i\cdot v$, and coaction $\delta(v)=v^{(-1)}\otimes v^{(0)}$ for $v\in V$, we define a $U_q(\fr{g})$ action by
\begin{gather*}
K_i\cdot v=q^{i\cdot |v|}v, \qquad F_i\cdot v=f_i\cdot v, \qquad E_i\cdot v=q^{-i\cdot |v^{(0)}| }\ev(e_i\otimes v^{(-1)})\cdot v^{(0)},
\end{gather*}
where $v\mapsto |v|\otimes v$ is the $L$-grading of $V$. The action of $F_i$ is locally finite by construction. One checks that this action preserves the quantum group relations. Further, any morphism in $\lYD{U_q(\fr{n}_-)}(\cB)$ is a morphism of the corresponding $U_q(\fr{g})$-module. As the pairing is non-degenerate, any $U_q(\fr{n}_+)$-locally finite $U_q(\fr{g})$-module, with action of the $K_i$ induced by an $L$-grading, arises this way. This uses that the dual $R$-matrix of $\Bbbk L$ induces a non-generate Hopf algebra pairing of $\Bbbk L$ with the group algebra $K$ generated by the $K_i$ (using that $q$ is generic).
\end{proof}

Note that the above conditions on $U_q(\fr{g})$ modules are two out of the three conditions used to define the category $\cO_q$ for quantum groups in \cite{AM}. Modules in $\cZ_{\cB}(\cM)$ are not necessarily finitely generated. Note that, however, unlike $\cZ_{\cB}(\cM)$, $\cO_q$ is not a monoidal category as the requirement of finite generation is not stable under taking tensor products over $\mF$.

The category $\cM\boxtimes_{\cB}\cM^{\oop}$ is, by Proposition \ref{Bmodreltensor} and Example \ref{tensorBBmod}, equivalent as a monoidal category to $\lrmod{U_q(\mathbf{n}_-)}{U_q(\mathbf{n}_-)}(\lcomod{L}_q)$. 
This category can be identified as the category $\lmod{T_q(\fr{g})}^{\op{w}}$ of \emph{weight modules} (that is, modules satisfying condition (i) from Theorem \ref{quantumcenter}) over the following Hopf algebra:

\begin{definition}\label{Tqdef}
Consider the Hopf algebra $T_q(\fr{g})$ generated by $x_i, y_i, K_i, K_i^{-1}$ for $i\in I$, subject to the relations
\begin{gather}
x_iy_j=q^{i\cdot j}y_jx_i,\qquad
K_ix_j=q^{-i\cdot j}x_jK_i,\qquad
K_iy_j=q^{-i\cdot j}y_jK_i,\qquad K_i^{\pm 1}K_i^{\mp 1}=1\label{bosonT}\\
\sum_{k=0}^{1-a_{ij}}(-1)^k\binom{1-a_{ij}}{k}_{q_i}x^{1-a_{ij}-k}_i x_jx_i^k=0,\qquad \sum_{k=0}^{1-a_{ij}}(-1)^k\binom{1-a_{ij}}{k}_{q_i}y^{1-a_{ij}-k}_i y_jy_i^k=0,\label{serrerel}
\end{gather}
with coproducts defined on generators by
\begin{align}\label{Tcoprod}
\Delta(x_i)&=x_i\otimes 1+K_i^{-1}\otimes x_i, &\Delta(y_i)&=y_i\otimes 1+K_i\otimes y_i, &\Delta(K_i)=K_i\otimes K_i.
\end{align}
\end{definition}

It was shown in \cite{Mas}*{Example~5.4} that $U_q(\fr{g})$ is related to a version of $T_q(\fr{g})$ via so-called \emph{Doi twist}. That is, a two-sided twist of the algebra structure by a $2$-cocycle. 

The constructions of this paper, see Section \ref{Moritasection}, imply that given a categorical module over $\lmod{T_q(\fr{g})}^{\op{w}}$ of weight modules over $T_q(\fr{g})$, we can construct a categorical module over the category $\lmod{\Ug}^{U_q(\fr{n}_+)\mathrm{-lfw}}$. Further, this constructions is bifunctorial.


\subsection{Applications to Small Quantum Groups}\label{smallqgroupssect}

In this section, we explain how to obtain the small quantum group $u_\epsilon(\fr{g})$ as a braided Drinfeld double, and apply the results of this paper to this example as a main application. 

Let $(I,\cdot)$ be a Cartan datum (as in the previous section) which is assumed to be of finite type. Denote the associated Lie algebra by $\fr{g}$. Assume that $\Bbbk$ is a field of characteristic zero and $\epsilon\in \Bbbk$ a primitive $l$-th root of unity, where $l\geq 3$ is an odd integer (assume that $l$ is coprime to $3$ if $\fr{g}$ contains a $G_2$-factor). Note that $\epsilon^2$ is also a primitive $l$-th root of unity. We set $\epsilon_i:=\epsilon^{i\cdot i/2}$.

Let $K=\langle k_1,\ldots, k_n\rangle$ be the abelian group generated by $k_i$ such that $k_i^l=1$. Consider the group algebra $\Bbbk K$ as a quasi-triangular Hopf algebra with universal $R$-matrix given by 
\begin{align}
R=\frac{1}{|I|^l}\sum_{\mathbf{i},\mathbf{j}\,\in\, \mZ_l\langle I\rangle} \epsilon^{-\mathbf{i}\cdot \mathbf{j}}k_{\mathbf{i}}\otimes k_{\mathbf{j}}.
\end{align}

Similarly to Section \ref{qgroupssect}, we can define a $\Bbbk K$-module $E=\Bbbk\langle e_i~\mid~i\in I \rangle$, with action given by $k_i\triangleright e_j=\epsilon^{i\cdot j}e_j$. 
The universal $R$-matrix induces a coaction on $E$ given by
\begin{align}
\delta(e_i)=R^{(2)}\otimes (R^{(1)}\triangleright e_i)=k_i\otimes e_i.
\end{align}
Consider the dual $\Bbbk K$-module $F=\Bbbk\langle f_i~\mid~i\in I \rangle$, with action given by $k_i\triangleright f_j=\epsilon^{-i\cdot j}f_j$. Following \cite{Lus}, the pairing $\langle ~,~\rangle\colon E\otimes F\to \Bbbk$, $\langle f_i,e_j \rangle=\delta_{i,j}(\epsilon_i-\epsilon_i^{-1})^{-1}$ uniquely extends to a pairing 
$
\ev\colon T(E)\otimes T(F)\longrightarrow \Bbbk.
$
of primitively generated Hopf algebras in $\lmod{K}_q$. The quotients by the left and right radical of the pairing $\ev$ yields braided Hopf algebras $\cB(E)$, $\cB(F)$, so-called \emph{Nichols algebras} \cite{AS}, such that the induced non-degenerated pairing $\ev$ on $\cB(E)\otimes \cB(F)$ is one of Hopf algebras. It is well-known, see e.g. \cite{AS3}*{Section~3}, that $\cB(E)$ can be identified with $u_\epsilon(\fr{n}_+)$, and $\cB(F)$ with $u_\epsilon(\fr{n}_-)$. In addition to the quantum Serre relations of Eq. (\ref{qserrerel}), with $e_i, f_i,\epsilon_i$ instead of $E_i,F_j,q_i$,
these algebras satisfy the nilpotency relations 
$e_i^l=0$, $f_i^l=0$, for all $i\in I$. 
The following theorem is derived similarly to \cite{Lau2}*{Theorem 3.25}. Details on the version of the small quantum group $u_\epsilon(\fr{g})$ (in the De Concini--Kac--Procesi form) used here can be found in \cite{BG}*{Chapter III.6}.\footnote{We use the coopposite Hopf algebra of the one appearing there.}

\begin{theorem}\label{smallthm}
There is an isomorphism of Hopf algebras between $\Drin_{\Bbbk K}(\cB(E),\cB(F))$ and $u_\epsilon(\fr{g})$.
\end{theorem}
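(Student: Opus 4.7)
The strategy is to identify both Hopf algebras via their presentations by generators and relations, then match their coalgebra structures. The key background is the explicit description of the braided Drinfeld double (double bosonization) $\Drin_H(C,B)$ from \cites{Maj1,Maj3}: as a coalgebra it is isomorphic to $B \otimes H \otimes C$ with a twisted tensor coproduct, while as an algebra it is generated by the three subalgebras $B$, $H$, $C$ subject to commutation relations determined by the $H$-action, the braiding, and the pairing $\ev$. The goal is to show that under the identification $E_i \mapsto e_i$, $F_i \mapsto f_i$, $K_i \mapsto k_i$, the defining relations of $u_\epsilon(\fr{g})$ match those of $\Drin_{\Bbbk K}(\cB(E),\cB(F))$.

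First, I would write down the structure of $\Drin_{\Bbbk K}(\cB(E),\cB(F))$ explicitly. The $K$-commutation relations come directly from the $\Bbbk K$-actions $k_i \triangleright e_j = \epsilon^{i\cdot j} e_j$ and $k_i \triangleright f_j = \epsilon^{-i\cdot j} f_j$, giving $k_i e_j = \epsilon^{i\cdot j} e_j k_i$ and $k_i f_j = \epsilon^{-i\cdot j} f_j k_i$, matching the relations of $u_\epsilon(\fr{g})$. The quantum Serre relations \eqref{qserrerel} (with $\epsilon_i$ in place of $q_i$) and the nilpotency relations $e_i^l = f_i^l = 0$ hold automatically in $\cB(E)$ and $\cB(F)$ by definition of Nichols algebras as the quotients of the tensor algebras by the radicals of the pairing.

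The main obstacle is the cross-relation $[e_i, f_j]$. By the general formula for the braided Drinfeld double, this commutator is computed using the pairing together with the $R$-matrix of $\Bbbk K$. The calculation should yield, for the primitive generators $e_i, f_j$,
\[
[e_i, f_j] = \delta_{ij}\,\frac{k_i - k_i^{-1}}{\epsilon_i - \epsilon_i^{-1}},
\]
using $\ev(f_j, e_i) = \delta_{ij}(\epsilon_i - \epsilon_i^{-1})^{-1}$ together with the action of $k_i$ on $e_i$ (which supplies the $k_i$ on the right-hand side). This is essentially the content of \cite{Lau2}*{Theorem~3.25} adapted to the root-of-unity setting; the computation is the same as in the generic case and does not rely on $\epsilon$ being generic. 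Combined with the above, this gives a surjective Hopf algebra morphism $u_\epsilon(\fr{g}) \to \Drin_{\Bbbk K}(\cB(E),\cB(F))$.

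Finally, to see that the morphism is an isomorphism, I would compare coalgebra structures and dimensions. The coproduct on $\Drin_{\Bbbk K}(\cB(E),\cB(F))$ is the bosonized coproduct: the primitive coproduct of $e_i \in \cB(E)$ in the braided category $\lmod{\Bbbk K}$ bosonizes to $\Delta(E_i) = E_i \otimes K_i + 1 \otimes E_i$ (and similarly, with $K_i^{-1}$, for $F_i$), matching the coproduct of $u_\epsilon(\fr{g})$. Since both algebras have the triangular decomposition $u_\epsilon(\fr{n}_+) \otimes \Bbbk K \otimes u_\epsilon(\fr{n}_-)$ as vector spaces and the morphism respects this decomposition, it is bijective. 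Thus it is an isomorphism of Hopf algebras.
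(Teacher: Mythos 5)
Your overall strategy --- presenting $\Drin_{\Bbbk K}(\cB(E),\cB(F))$ by generators and relations and matching it against $u_\epsilon(\fr{g})$, with the Serre and nilpotency relations absorbed into the definition of the Nichols algebras and bijectivity read off from the triangular decomposition --- is exactly the paper's route. However, there is a concrete error in the middle step. The double bosonization does \emph{not} have the relations you assert under the naive correspondence $E_i\mapsto e_i$, $F_i\mapsto f_i$, $K_i\mapsto k_i$. In the presentation the paper writes down (consistent with Majid's double bosonization and with Definition \ref{BBHdef}), the coproduct on the $\cB(E)$-generators is one-sided, $\Delta(e_i)=e_i\otimes 1+k_i^{-1}\otimes e_i$, not the mixed form $E_i\otimes K_i+1\otimes E_i$, and the cross relation is the braided commutator
\begin{equation*}
e_if_j-\epsilon^{i\cdot j}f_je_i=\delta_{i,j}\,\frac{1-k_i^{-2}}{\epsilon_i-\epsilon_i^{-1}},
\end{equation*}
not $[e_i,f_j]=\delta_{ij}(k_i-k_i^{-1})/(\epsilon_i-\epsilon_i^{-1})$. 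Consequently the map you propose is neither an algebra nor a coalgebra morphism, and the claimed "surjective Hopf algebra morphism" does not exist in that form.

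The repair is the one the paper makes: send $k_i\mapsto K_i$, $f_i\mapsto F_i$, but $e_i\mapsto K_i^{-1}E_i$. One checks that $\Delta(K_i^{-1}E_i)=K_i^{-1}E_i\otimes 1+K_i^{-1}\otimes K_i^{-1}E_i$, matching the one-sided coproduct, and that conjugating $[E_i,F_j]=\delta_{ij}(K_i-K_i^{-1})/(\epsilon_i-\epsilon_i^{-1})$ by $K_i^{-1}$ (using $K_i^{-1}F_j=\epsilon^{i\cdot j}F_jK_i^{-1}$) produces exactly the twisted commutator above. With this twist inserted, the rest of your argument (Serre and nilpotency relations holding in the Nichols algebras, bijectivity via the triangular decomposition $\cB(E)\otimes\Bbbk K\otimes\cB(F)$) goes through and recovers the paper's proof.
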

\begin{proof}
The braided Drinfeld double is the Hopf algebra generated by $e_i,f_i,k_i^{\pm 1}$ subject to the relations
\begin{gather*}
k_ie_j=\epsilon^{i\cdot j}e_jk_i, \quad k_if_j=\epsilon^{-i\cdot j}f_jk_i,\quad k_i^{\pm 1}k_i^{\mp 1}=1,\quad e_if_j-\epsilon^{i\cdot j}f_je_i=\delta_{i,j}\frac{1-k_i^{-2}}{\epsilon_i-\epsilon_i^{-1}},\\
\sum_{k=0}^{1-a_{ij}}(-1)^k\binom{1-a_{ij}}{k}_{q_i}e^{1-a_{ij}-k}_i e_je_i^k=0,\qquad \sum_{k=0}^{1-a_{ij}}(-1)^k\binom{1-a_{ij}}{k}_{q_i}f^{1-a_{ij}-k}_i f_jf_i^k=0,
\end{gather*}
with coproducts given by
\begin{gather*}
\Delta(k_i)=k_i\otimes k_i, \qquad \Delta(e_i)=e_i\otimes 1+k^{-1}_i\otimes e_i,\qquad \Delta(f_i)=f_i\otimes 1+k^{-1}_i\otimes f_i.
\end{gather*}
One now checks that the mapping $\phi(k_i)=K_i$, $\phi(e_i)=K_i^{-1}E_i$, $\phi(f_i)=F_i$ extends to an isomorphism of Hopf algebras   $\phi\colon \Drin_{\Bbbk K}(\cB(E),\cB(F)) \to u_\epsilon (\fr{g})$.
\end{proof}

\begin{corollary}
Denote $\cB=\lmod{K}_\epsilon$ and $\cM=\lmod{u_\epsilon(\fr{n}_-)}(\cB)$. Then there is an equivalence of monoidal categories between $\cZ_\cB(\cM)$ and $\lmod{u_\epsilon(\fr{g})}$.
\end{corollary}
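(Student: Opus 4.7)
The plan is to assemble three results already established in the paper into a short chain of equivalences. First, I would invoke Proposition \ref{YDprop}, which gives an equivalence of braided monoidal categories
\[
\cZ_{\cB}(\cM) \;\simeq\; \lYD{u_\epsilon(\fr{n}_-)}(\cB),
\]
since $u_\epsilon(\fr{n}_-)$ is a Hopf algebra object in the quasi-triangular setting $\cB = \lmod{K}_\epsilon = \lmod{\Bbbk K}$.

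Next, I would apply Corollary \ref{drincor} with $H = \Bbbk K$, $B = \cB(F) = u_\epsilon(\fr{n}_-)$, and $C = \cB(E) = u_\epsilon(\fr{n}_+)$, with the non-degenerate Hopf pairing $\ev$ recalled in Section \ref{smallqgroupssect}. This yields
\[
\lYD{u_\epsilon(\fr{n}_-)}(\cB) \;\simeq\; \lmod{\Drin_{\Bbbk K}(u_\epsilon(\fr{n}_+), u_\epsilon(\fr{n}_-))}^{u_\epsilon(\fr{n}_+)\mathrm{-lf}}.
\]
The crucial simplification here, in contrast to Theorem \ref{quantumcenter}, is that $u_\epsilon(\fr{n}_+) = \cB(E)$ is finite-dimensional (the Nichols algebra truncates by the nilpotency relations $e_i^l = 0$). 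Therefore the local finiteness condition on the $u_\epsilon(\fr{n}_+)$-action is automatically satisfied by every module, so the superscript $u_\epsilon(\fr{n}_+)\mathrm{-lf}$ can be dropped and the equivalence becomes one with the full category of modules over the braided Drinfeld double.

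Finally, I would invoke Theorem \ref{smallthm}, which provides an isomorphism of Hopf algebras $\Drin_{\Bbbk K}(u_\epsilon(\fr{n}_+), u_\epsilon(\fr{n}_-)) \cong u_\epsilon(\fr{g})$. This induces an equivalence of monoidal categories between their respective module categories, so composing with the above gives
\[
\cZ_{\cB}(\cM) \;\simeq\; \lmod{u_\epsilon(\fr{g})}.
\]
There is no real obstacle here beyond verifying that the hypotheses of each cited result are met; the only subtlety worth flagging explicitly is the automatic local finiteness coming from finite-dimensionality of $u_\epsilon(\fr{n}_+)$, which is what makes the small quantum group case strictly simpler than the generic quantum group case of Theorem \ref{quantumcenter}.
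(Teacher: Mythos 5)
Your proposal is correct and follows exactly the chain the paper intends: the corollary is stated without proof precisely because it is the composite of Proposition \ref{YDprop}, Corollary \ref{drincor} (whose final clause already records that the local finiteness condition drops when $B$ is finite-dimensional), and the Hopf algebra isomorphism of Theorem \ref{smallthm}. Your identification of $B=\cB(F)=u_\epsilon(\fr{n}_-)$, $C=\cB(E)=u_\epsilon(\fr{n}_+)$, $H=\Bbbk K$ matches the paper's conventions, so nothing further is needed.
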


We can now apply Theorem \ref{centermorita} to the $\cB$-augmented monoidal category $\cM=\lmod{u_\epsilon(\fr{n}_-)}^{\mathrm{fd}}(\cB)$.

\begin{corollary}\label{smallqcor}
The monoidal categories $\lmod{u_\epsilon(\fr{g})}^{\mathrm{fd}}$ and $\cM\boxtimes_\cB\cM^{\oop}$ are categorically Morita equivalent. The latter category is equivalent as a monoidal category to $\lrfmod{u_\epsilon(\fr{n}_-)}{u_\epsilon(\fr{n}_-)}(\cB)$.
\end{corollary}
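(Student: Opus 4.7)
The plan is to assemble this corollary from pieces already established in the paper, specifically Theorem~\ref{smallthm}, Corollary~\ref{drincor}, Corollary~\ref{centermorita}, and Example~\ref{tensorBBmod}, with only bookkeeping about finiteness in between. First I would check that the categorical setup actually satisfies the hypotheses of Corollary~\ref{centermorita}: the group algebra $\Bbbk K$ is a finite-dimensional quasi-triangular Hopf algebra, so $\cB=\lmod{K}_\epsilon^{\mathrm{fd}}$ is a finite $\Bbbk$-multitensor category; and $u_\epsilon(\fr{n}_-)=\cB(F)$ is a finite-dimensional Hopf algebra object in $\cB$, so $\cM=\lmod{u_\epsilon(\fr{n}_-)}^{\mathrm{fd}}(\cB)$ is again a finite $\Bbbk$-multitensor category and is $\cB$-augmented via the forgetful/trivial-action pair as in Example~\ref{bialgebraexpl}.

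Next I would identify the relative center. Theorem~\ref{smallthm} provides an isomorphism of Hopf algebras $\Drin_{\Bbbk K}(\cB(E),\cB(F))\cong u_\epsilon(\fr{g})$, so the finite-dimensional case of Corollary~\ref{drincor} (applied with $B=u_\epsilon(\fr{n}_-)$, $C=u_\epsilon(\fr{n}_+)$, which are dually paired via the Nichols pairing $\ev$ and finite-dimensional, so the local finiteness condition is automatic) yields an equivalence of braided monoidal categories
\[
\cZ_{\cB}(\cM)\;\simeq\;\lmod{\Drin_{\Bbbk K}(\cB(E),\cB(F))}^{\mathrm{fd}}\;\simeq\;\lmod{u_\epsilon(\fr{g})}^{\mathrm{fd}}.
\]
Then Corollary~\ref{centermorita}, whose finiteness hypotheses are verified in the first step, gives at once that $\cM\boxtimes_{\cB}\cM^{\oop}$ and $\cZ_\cB(\cM)$ are categorically Morita equivalent; combining with the previous display gives the categorical Morita equivalence between $\lmod{u_\epsilon(\fr{g})}^{\mathrm{fd}}$ and $\cM\boxtimes_\cB\cM^{\oop}$.

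Finally, for the second assertion I would invoke Example~\ref{tensorBBmod} (built on Proposition~\ref{Bmodreltensor} and Lemma~\ref{oppositeaugmented}), which identifies $\cM\boxtimes_{\cB}\cM^{\oop}$ with $\lrmod{B}{B}(\cB)$ as monoidal categories whenever $\cM=\lmod{B}(\cB)$ for a Hopf algebra $B$ in $\cB$. Specialising to $B=u_\epsilon(\fr{n}_-)$ and restricting to the finite-dimensional subcategories (which is compatible with the monoidal structure, since the tensor product in $\cB$ preserves finite dimension) produces the desired equivalence
\[
\cM\boxtimes_{\cB}\cM^{\oop}\;\simeq\;\lrfmod{u_\epsilon(\fr{n}_-)}{u_\epsilon(\fr{n}_-)}(\cB).
\]

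There is no genuine obstacle; the only thing to be careful about is that every construction used (the relative tensor product, the center, Corollary~\ref{centermorita}, Example~\ref{tensorBBmod}) behaves well when one restricts to finite-dimensional objects, which is immediate here because $\Bbbk K$ and $u_\epsilon(\fr{n}_-)$ are both finite-dimensional, so all relevant categories $\lmod{(-)}^{\mathrm{fd}}$ are finite multitensor categories and the equivalences from the general theory restrict. The mildly non-trivial point is checking that the identification of $\cM\boxtimes_\cB\cM^{\oop}$ from Example~\ref{tensorBBmod} survives the passage to finite-dimensional objects, which follows because the equivalence there is implemented by a tensor-product functor preserving the underlying object in $\cB$.
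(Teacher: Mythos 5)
Your proposal is correct and takes essentially the same route the paper intends: the corollary is stated without an explicit proof precisely because it is the assembly you describe, namely the identification $\cZ_{\cB}(\cM)\simeq\lmod{u_\epsilon(\fr{g})}$ from Theorem \ref{smallthm} together with Corollary \ref{drincor}, the Morita equivalence from Corollary \ref{centermorita}, and the identification of $\cM\boxtimes_{\cB}\cM^{\oop}$ from Proposition \ref{Bmodreltensor} and Example \ref{tensorBBmod}. Your explicit bookkeeping of the finiteness hypotheses (that $\Bbbk K$ and $u_\epsilon(\fr{n}_-)$ are finite-dimensional, so all categories involved are finite multitensor and the general equivalences restrict to finite-dimensional objects) is the only content beyond what the paper leaves implicit, and it is handled correctly.
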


\begin{definition}\label{tepdef}
Denote by $t_\epsilon(\fr{g})$ the algebra generated by $x_i,k_i,y_i$ for $i\in I$, subject to the relations obtained from Eqs. (\ref{bosonT})--(\ref{serrerel}) by replacing $q_i$ with $\epsilon_i$, and the additional relations
\begin{align}
x_i^l&=0, &y_j^l&=0, &\forall i\in I,
\end{align}
and Hopf algebra structure given using the same coproduct formulas as in Eq. (\ref{Tcoprod}).
\end{definition}

\begin{corollary}
The monoidal categories $\cM\boxtimes_\cB\cM^{\oop}$ and $\lmod{t_\epsilon(\fr{g})}$ are equivalent.
\end{corollary}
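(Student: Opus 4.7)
The plan is to chain together the previous corollary with Proposition \ref{BBHprop} and then identify the resulting Hopf algebra explicitly with $t_\epsilon(\fr{g})$. By Corollary \ref{smallqcor}, $\cM\boxtimes_{\cB}\cM^{\oop}$ is equivalent as a monoidal category to $\lrfmod{u_\epsilon(\fr{n}_-)}{u_\epsilon(\fr{n}_-)}(\cB)$. Applying Proposition \ref{BBHprop} with $B=u_\epsilon(\fr{n}_-)$ and $H=\Bbbk K$ yields an equivalence
\[
\lrmod{u_\epsilon(\fr{n}_-)}{u_\epsilon(\fr{n}_-)}(\cB)\simeq \lmod{(u_\epsilon(\fr{n}_-)\otimes\leftexp{\cop}{u_\epsilon(\fr{n}_-)})\rtimes \Bbbk K}
\]
of monoidal categories. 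The remaining work is therefore to construct an isomorphism of Hopf algebras
\[
\Phi\colon (u_\epsilon(\fr{n}_-)\otimes\leftexp{\cop}{u_\epsilon(\fr{n}_-)})\rtimes \Bbbk K\;\stackrel{\sim}{\longrightarrow}\;t_\epsilon(\fr{g}),
\]
and to restrict to finite-dimensional modules, whence the final equivalence.

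I would define $\Phi$ on the subalgebras of generators as follows. Labeling the two copies of $u_\epsilon(\fr{n}_-)$ inside the bosonization as $B_+=\langle y_1,\dots,y_n\rangle$ and $B_-=\langle x_1,\dots,x_n\rangle$ (using the notation of Definition \ref{BBHdef}), set $\Phi(k_i)=k_i$ and map the primitive generators $y_j\in B_+$ to the generators $x_j$ of $t_\epsilon(\fr{g})$, and $x_j\in B_-$ to $y_j$ of $t_\epsilon(\fr{g})$. Since both copies of $u_\epsilon(\fr{n}_-)$ satisfy the quantum Serre relations \eqref{serrerel} and the nilpotency relations $y_i^l=x_i^l=0$, the intra-copy relations match the corresponding relations in $t_\epsilon(\fr{g})$ directly. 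The $\Bbbk K$-cross-relations $k_iy_j=\epsilon^{-i\cdot j}y_jk_i$ and $k_ix_j=\epsilon^{-i\cdot j}x_jk_i$ in the bosonization come from the action $k_\mathbf{a}\triangleright f_j=\epsilon^{-\mathbf{a}\cdot j}f_j$ on $u_\epsilon(\fr{n}_-)$, and match \eqref{bosonT} in Definition \ref{tepdef} under $\Phi$.

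The main calculational step will be to verify the $B_+$-$B_-$ cross-relation and the two coproduct formulas using the quasi-triangular structure $R=\tfrac{1}{|I|^l}\sum\epsilon^{-\mathbf{a}\cdot\mathbf{b}}k_\mathbf{a}\otimes k_\mathbf{b}$. Rewriting $R=\sum_{\mathbf{b}}P_\mathbf{b}\otimes k_\mathbf{b}$ with $P_\mathbf{b}$ the weight-$\mathbf{b}$ projector, the defining relation $y_jx_i=(R^{(2)}\triangleright x_i)(R^{(1)}\triangleright y_j)$ collapses to $y_jx_i=\epsilon^{i\cdot j}x_iy_j$, since the projector $P_\mathbf{b}$ forces $\mathbf{b}=-j$ (the weight of $y_j$) and then $k_{-j}\triangleright x_i=\epsilon^{i\cdot j}x_i$; under $\Phi$ this is exactly the relation $x_jy_i=\epsilon^{i\cdot j}y_ix_j$ of $t_\epsilon(\fr{g})$. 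For the coproducts, using $R^{(1)}\triangleright 1_B=\varepsilon_H(R^{(1)})1_B$ and $(\varepsilon_H\otimes\ide)(R)=1$, the formula $\Delta(y_j)=y_{j(1)}R^{(2)}\otimes (R^{(1)}\triangleright y_{j(2)})$ for primitive $y_j$ reduces to $y_j\otimes 1+k_j^{-1}\otimes y_j$, matching $\Delta(x_j)$ in $t_\epsilon(\fr{g})$; the analogous calculation with $R^{-1}=\sum_\mathbf{b}P_\mathbf{b}\otimes k_\mathbf{b}^{-1}$ gives $\Delta(x_j)=x_j\otimes 1+k_j\otimes x_j$, matching $\Delta(y_j)$ in $t_\epsilon(\fr{g})$. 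Since $\Phi$ is an algebra map between Hopf algebras of the same graded dimension (both have PBW bases $\{y^\alpha k^\beta x^\gamma\}$ indexed identically) and is clearly surjective onto generators, it is a Hopf algebra isomorphism.

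The remaining step is routine: the equivalence of Proposition \ref{BBHprop} identifies $\lrfmod{u_\epsilon(\fr{n}_-)}{u_\epsilon(\fr{n}_-)}(\cB)$ with finite-dimensional $(B\otimes \leftexp{\cop}{B})\rtimes \Bbbk K$-modules, which via $\Phi$ becomes $\lmod{t_\epsilon(\fr{g})}^{\mathrm{fd}}$. The main obstacle, as is typical for such bosonization arguments, lies in the bookkeeping of \emph{weight conventions}: one must consistently track the sign conventions for the $K$-action on $u_\epsilon(\fr{n}_-)$, the cop-structure on $B_-$, and the direction in which the $R$-matrix contracts, to avoid spurious inverse-$K$ factors in the final identification.
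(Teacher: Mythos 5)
The paper states this corollary without proof, and your argument --- chaining Corollary \ref{smallqcor} (equivalently Proposition \ref{Bmodreltensor}/Example \ref{tensorBBmod}) with Proposition \ref{BBHprop} and then exhibiting a Hopf algebra isomorphism $(u_\epsilon(\fr{n}_-)\otimes\leftexp{\cop}{u_\epsilon(\fr{n}_-)})\rtimes\Bbbk K\cong t_\epsilon(\fr{g})$ --- is exactly the intended route, and your $R$-matrix computations of the cross-relations and of the two coproduct formulas check out. The only point worth flagging is that your dimension count (needed for bijectivity of $\Phi$) presupposes the relation $k_i^l=1$ in $t_\epsilon(\fr{g})$, which Definition \ref{tepdef} leaves implicit.
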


\appendix
\section{Existence of Relative Tensor Products}\label{appendixA}

\begin{proof}[Proof of Theorem \ref{reltensorexistence}]
We prove the theorem in two steps. First, we show the existence of a $\Bbbk$-linear category $\cV\otimes_{\cC}\cW$, which is then completed under finite colimits to give $\cV\boxtimes_{\cC}\cW$.

In the first step, we work in the $2$-category of small $\Bbbk$-linear categories $\Cat$. That is, we do not require the existence of finite colimits (or biproducts) at this stage. The $2$-category $\Cat$ is cartesian closed, with tensor product $\otimes$ and cocomplete (see \cite{Wol3}) as it is the category of $\Vect$-categories. We also require the existence of conical pseudo-colimits in $\Cat$ (see \cite{Kel3}*{Section 5}). Indeed, it was shown in \cite{Str} that such conical pseudo-colimits reduce to indexed colimits, and hence exist.

We fix some notation. Denote by $\bfD$ the $2$-category described by
\begin{gather*}
\bfD:=\Big(\xymatrix{1\ar@/^/@<1.5ex>[rrr]^{l_1}\ar[rrr]^{m}\ar@/_/@<-1.5ex>[rrr]_{r_1}&&&2\ar@<1ex>[rrr]^{l_2}\ar@<-1ex>[rrr]_{r_2}&&&3}\Big),\\
l_2 r_1=r_2l_1, \qquad \chi\colon r_2 m\natisomorph r_2r_1, \qquad \xi\colon l_2 m\natisomorph l_2l_1.
\end{gather*}
Given a right $\cC$-module $\cV$ and a left $\cC$-module $\cW$, we can define a strict $2$-functor $\Gamma\colon \bfD\to \Cat$ by the diagram
\begin{gather*}
\Gamma:=\Big(\xymatrix{\cV\otimes \cC\otimes \cC\otimes \cW\ar@/^/@<1.5ex>[rrr]^{l_1=\triangleleft \otimes \ide_{\cC}\otimes\ide_{\cW}}\ar[rrr]^{m=\ide_{\cV} \otimes\, \otimes^{\cC}\,\otimes \ide_{\cW}}\ar@/_/@<-1.5ex>[rrr]_{r_1=\ide_{\cW} \otimes \ide_{\cC}\otimes \triangleright}&&&\cV\otimes \cC\otimes \cW\ar@<1ex>[rrr]^{l_2=\triangleleft \otimes \ide_{\cW}}\ar@<-1ex>[rrr]_{r_2=\ide_{\cW} \otimes \triangleright}&&&\cV\otimes \cW}\Big),\\
\Gamma(\chi)=\chi\otimes \ide_{\cW}, \qquad \Gamma(\xi)=\ide_{\cV}\otimes \xi.
\end{gather*}
where by slight abuse of notation, $r_1$, $l_1$, $r_2$, $l_2$ and $m$ also denote their images under $\Gamma$.

In short, the relative tensor product $\cV\otimes_\cC \cW$ of a right $\cC$-module $\cV$ and a left $\cC$-module $\cW$ can be defined as the (conical) pseudo-colimit (see \cite{Kel3}*{Eq. (5.7)}) over $\Gamma$ in $\Cat$ 
which exists by the above considerations. 

We shall now spell out the data and universal properties involved in the definition of $\cV\otimes_\cC \cW$ in detail. Let $\cD$ be a $\Bbbk$-linear category and assume the existence of a diagram
\begin{align}\label{cone}
\vcenter{\hbox{
\xymatrix{\cV\otimes \cC\otimes \cC\otimes \cW\ar@/_6pc/[rrrrrrrd]_{\rF_1}\ar@/^/@<1.5ex>[rrr]^{l_1=\triangleleft \otimes \ide_{\cC}\otimes\ide_{\cW}}\ar[rrr]^{m=\ide_{\cV} \otimes\, \otimes^{\cC}\,\otimes \ide_{\cW}}\ar@/_/@<-1.5ex>[rrr]_{r_1=\ide_{\cW} \otimes \ide_{\cC}\otimes \triangleright}&&&\ar@<2ex>@{<=}[dl]_{\rho_{l_1}}\ar@<6ex>@{<=}[dl]_{\rho_{m}}\ar@<8ex>@{<=}[dl]^{\rho_{r_1}}\cV\otimes \cC\otimes \cW\ar@/_3pc/[rrrrd]_{\rF_2}\ar@<1ex>[rrr]^{l_2=\triangleleft \otimes \ide_{\cW}}\ar@<-1ex>[rrr]_{r_2=\ide_{\cW} \otimes \triangleright}&&&\cV\otimes \cW\ar[dr]^{\rF_3}\ar@<2ex>@{<=}[dl]^{\rho_{r_2}}\ar@{<=}[dl]_{\rho_{l_2}}&\\
&&&&&&&\cD,}}
}
\end{align}
i.e. there are natural isomorphisms
\begin{align*}
\rho_{l_1}\colon \rF_1\natisomorph \rF_2 l_1, &&\rho_{r_1}\colon \rF_1\natisomorph \rF_2 r_1, &&\rho_{m}\colon \rF_1\natisomorph \rF_2 m, &&\rho_{l_2}\colon \rF_2\natisomorph \rF_3 l_2, &&\rho_{r_2}\colon \rF_2\natisomorph \rF_3 r_2,
\end{align*}
satisfying 
\begin{align}\label{coherence1}
(\rho_{r_2}\circ_0\ide_{l_1})\rho_{l_1}&=(\rho_{l_2}\circ_0\ide_{r_1})\rho_{r_1}, \\ \rF_3(\chi\otimes \ide_{\cW})(\rho_{r_2}\circ_0\ide_{m})\rho_{m}&=(\rho_{r_2}\circ_0\ide_{r_1})\rho_{r_1},\label{coherence2}\\
\rF_3(\ide_{\cV}\otimes\xi )(\rho_{l_2}\circ_0\ide_{m})\rho_{m}&=(\rho_{l_2}\circ_0\ide_{l_1})\rho_{l_1}.\label{coherence3}
\end{align}
Note that the data of such a diagram is the same as a pseudo $2$-natural transformation $\Delta \cI \Longrightarrow \Fun(\Gamma(-),\cD)$, where $\Delta \cI$ is the constant $2$-functor from the (opposite of) the $2$-category $\bfD$ to $\Cat$, see \cite{Kel3}*{Section~5}. (In the case of the simple diagram $\bfD$, (\ref{coherence1})--(\ref{coherence3}) are all coherences for the morphisms of the form $\rho$.) 

Together with modifications, the pseudo $2$-natural transformations $\cI \Longrightarrow \Fun(\Gamma(-),\cD)$ form a $\Bbbk$-enriched category, which we denote by $\Cone(\Gamma,\cD)$. 
That is, $\Cone(\Gamma,\cD)$ has diagrams of the form (\ref{cone}) as objects. Given two such diagram, where in the second one the functors and natural isomorphisms are equipped with a dash, e.g. $\rF_1'$, $\rho_{r_1}'$, a morphisms in $\Cone(\Gamma,\cD)$ consists of natural transformations $\mu_i\colon \rF_i\Longrightarrow \rF_i'$, such that the following diagrams commute (for $i=1,2$):
\begin{align}\label{conemorphism}
\vcenter{\hbox{
\xymatrix{\rF_i\ar@{=>}[r]^{\rho_{l_i}}\ar@{=>}[d]_{\mu_i} &\rF_{i+1} {l_i}\ar@{=>}[d]^{\mu_{i+1}\circ_0 l_i}\\
\rF_i'\ar@{=>}[r]^{\rho_{l_i}'} &\rF_{i+1}' l_i,
}}} &&\vcenter{\hbox{
\xymatrix{\rF_i\ar@{=>}[r]^{\rho_{r_i}}\ar@{=>}[d]_{\mu_i} &\rF_{i+1} r_i\ar@{=>}[d]^{\mu_{i+1}\circ_0 r_i}\\
\rF_i'\ar@{=>}[r]^{\rho_{r_i}'} &\rF_{i+1}' r_i,
}}}
&&\vcenter{\hbox{
\xymatrix{\rF_1\ar@{=>}[r]^{\rho_{m}}\ar@{=>}[d]_{\mu_1} &\rF_{2} m\ar@{=>}[d]^{\mu_{2}\circ_0 m}\\
\rF_1'\ar@{=>}[r]^{\rho_{m}'} &\rF_{2}' m.
}}}
\end{align}

The relative tensor product $\cV\otimes_\cC \cW$ comes together with the data of a \emph{counit} in the language of \cite{Kel3}, i.e. a pseudo $2$-natural transformation $\sigma\colon \Delta\cI\Longrightarrow \Fun(G(-), \cV\otimes_\cC \cW)$. That is, there exists a diagram as in (\ref{cone}) for $\cD=\cV\otimes_\cC \cW$, where we denote the appearing functors by $\Psi_1$, $\Psi_2$, and $\Psi_3$; and the natural isomorphisms by $\sigma_x$, for $x=r_1,l_1, r_2, l_2,m$. Given a functor $\rF\colon \cV\otimes_\cC \cW\to \cD$, the counit provides a diagram of the form (\ref{cone}). The universal property of $\cV\otimes_\cC \cW$ now asserts that this assignment gives an \emph{isomorphism} of categories
$$\Cone(\Gamma,\cD)\cong \Fun(\cV\otimes_\cC \cW,\cD).$$

Next, we claim that there is an \emph{equivalence} of $\Bbbk$-enriched categories 
$$\Cone(\Gamma,\cD)\simeq \Fun^{\bal{\cC}}(\cV\otimes \cW, \cD).$$
An object of $\Cone(\Gamma,\cD)$ already comes with a functor $\rF:=\rF_3\colon \cV\otimes \cW\to  \cD$, and a $\cC$-balancing isomorphism can be defined using $\eta:=\rho_{r_2}\rho_{l_2}^{-1}$. We need to verify the coherence diagram (\ref{balancedcoh}). This follows from commutativity of the diagram
\begin{align*}\xymatrix{
\rF_3(((V\triangleleft C)\triangleleft D)\otimes W)\ar[d]^{\rho_{l_2}^{-1}\circ_0 \ide_{l_1}}&&\rF_3((V\triangleleft (C\otimes^{\cC} D))\otimes W)\ar[ll]_{\rF_3(\xi\otimes \ide)}\ar[d]^{\rho_{l_2}^{-1}\circ_0 \ide_m}\\
\rF_2((V\triangleleft C)\otimes D\otimes W)\ar[d]^{\rho_{r_2}\circ_0 \ide_{l_1}}\ar[dr]^{\rho_{l_1}^{-1}}&&\rF_2(V\otimes (C\otimes^{\cC} D)\otimes W)\ar[dl]^{\rho_{m}^{-1}}\ar@{=}[dd]\\
\rF_3((V\triangleleft C)\otimes (D\triangleright W))\ar[d]^{\rho_{l_2}^{-1}\circ_0 \ide_{r_1}}&\rF_1(V\otimes C\otimes D\otimes W)\ar[dl]^{\rho_{r_1}}\ar[dr]^{\rho_{m}}&\\
\rF_2(V\otimes C\otimes (D\triangleright W))\ar[d]^{\rho_{r_2}\circ_0 \ide_{r_1}}&&\rF_2(V\otimes (C\otimes^{\cC} D)\otimes W)\ar[d]^{\rho_{r_2}\circ_0 \ide_{m}}\\
\rF_3(V\otimes (C\triangleright (D\triangleright W)))&&\ar[ll]^{\rF_3(\ide\otimes \chi)}\rF_3(V\otimes ((C\otimes^{\cC} D)\triangleright W)),
}
\end{align*}
for objects $V$ in $\cV$, $W$ in $\cW$ and $C,D$ in $\cC$. Commutativity of the smaller, enclosed, diagrams follows from Eqs. (\ref{coherence1})--(\ref{coherence3}). This implies commutativity of the outer diagram, which implies Eq. (\ref{balancedcoh}). A morphism $\mu$ in $\Cone(\Gamma,\cD)$ clearly gives a natural transformation $\mu_1\colon \rF\to \rF'$, which commutes with the balancing as required in Eq. (\ref{balancedmor}) using Eq. (\ref{conemorphism}). Hence, we have constructed a functor
\[
\Upsilon \colon \Cone(\Gamma,\cD)\longrightarrow  \Fun^{\bal{\cC}}(\cV\otimes \cW, \cD),
\]
which is $\Bbbk$-linear by construction.

Conversely, we want to construct a $\Bbbk$-linear functor
\[
\Omega \colon \Fun^{\bal{\cC}}(\cV\otimes \cW, \cD)\longrightarrow  \Cone(\Gamma,\cD).
\]
Assume given a $\cC$-balanced functor $\rF$ with balancing isomorphism $\eta$, we define
\begin{align*}
\rF_3&=\rF, & \rF_2&=\rF(l_2), & \rF_3&=\rF((l_2\circ_0 \ide_{l_1})l_1),\\
\rho_{l_2}&=\ide, & \rho_{r_2}&=\eta, & \rho_m&=\rF(\xi^{-1}\otimes\ide), \\
\rho_{l_1}&=\ide, & \rho_{r_1}&=\eta\circ_0 \ide_{l_1}.
\end{align*}
Equation (\ref{coherence1}) follows as it is an equality by definition, and Equation (\ref{coherence2}) follows from Equation (\ref{balancedcoh}) for the balancing isomorphism $\eta$, while Equation (\ref{coherence3}) is just the invertibility condition of $\rF(\xi\otimes \ide)$. Thus, we obtain an object in $\Cone(\Gamma,\cD)$. Next, giving a morphism $\phi\colon (\rF,\eta)\Rightarrow (\rF', \eta')$ of $\cC$-balanced functors, we define
\begin{align*}
\mu_1&=\phi, & \mu_2&=\phi\circ_0 \ide_{l_2}, & \mu_3&=\phi\circ_0 \ide_{(l_2\circ_0\ide)l_1}.
\end{align*}
It is clear that the diagrams in (\ref{conemorphism}) commute. For $r_1, l_1, r_2, l_2 $ this follows from compatibility of $\eta$ and the identities with $\phi$, while for $m$ this follows from the compatibility of $\phi$ with the $\cC$-module structure of $\cV$. This assignment is clearly $\Bbbk$-linear and functorial, providing a functor $\Omega$ as required.

Next, we show that $\Upsilon$, $\Omega$ give an equivalence of categories. It is clear that $\Upsilon\Omega\cong \ide$. We apply $\Omega\Upsilon$ to an object $(\rF,\rho)$ of $\Cone(\Gamma,\cD)$ and denote the image by $(\ov{\rF},\ov{\rho})$. Then we define an isomorphism $\mu\colon (\rF,\rho)\to (\ov{\rF},\ov{\rho})$ by
\begin{align*}
\mu_1&=(\rho_{l_2}\circ_0\ide_{l_1})\rho_{l_1}, & \mu_2&=\rho_{l_2}, & \mu_3&=\ide.
\end{align*}
It is then readily verified that all diagrams in Eq. (\ref{conemorphism}) commute, using that
\begin{align*}
\ov{\rF_3}&=\rF_3, & \ov{\rF_2}&=\rF_3(l_2), & \ov{\rF_3}&=\rF_3((l_2\circ_0 \ide_{l_1})l_1),&\ov{\rho_{l_1}}&=\ide,\\
\ov{\rho_{l_2}}&=\ide, & \ov{\rho_{r_2}}&=\rho_{r_2}\rho_{l_2}^{-1}, & \ov{\rho_m}&=\rF_3(\xi^{-1}\otimes\ide),  &\ov{\rho_{r_1}}&=\rho_{r_2}\rho_{l_2}^{-1}\circ_0 \ide_{l_1}.
\end{align*}
Note that the square involving $\rho_m$ commutes by Eq. (\ref{coherence3}). As all $\mu_i$ are invertible, the claim follows.

To this point, we have shown that $\cV\otimes_{\cC}\cW$ exists in $\Cat$. By \cite{Kel}*{Theorem 5.35} we can consider the closure $\ov{\cV\otimes_{\cC}\cW}$ of $\cV\otimes_{\cC}\cW$ under finite colimits (which form a small class of indexing types). Then there is an equivalence of $\Bbbk$-linear categories
\begin{align*}
\Fun(\cV\otimes_{\cC}\cW,\cD)\simeq \Fun^c(\ov{\cV\otimes_{\cC}\cW},\cD)
\end{align*}
which is induced by composition with the inclusion functor $\cV\otimes_{\cC}\cW\hookrightarrow \ov{\cV\otimes_{\cC}\cW}$. Hence, we denote $\cV\boxtimes_{\cC}\cW=\ov{\cV\otimes_{\cC}\cW}$ which is a category in $\Cat^c$ and claim that it satisfies the universal property defining the relative tensor product of $\cV$ and $\cW$ over $\cC$ as in Definition \ref{relativetensordef}.

Indeed, since $\cV\boxtimes \cW$ has finite coproducts, there exists a functor $\rT$
\begin{align*}
\xymatrix{\cV\otimes \cW\ar[d]^{\rT'}\ar@{^{(}->}[r]^{\iota}& \cV\boxtimes \cW\ar[d]^{T}\\
\cV\otimes_{\cC} \cW\ar@{^{(}->}[r]^{\iota_{\cC}}& \cV\boxtimes_{\cC} \cW
}
\end{align*}
making the diagram commute. Now, given a $\cC$-balanced functor $\rF\colon \cV\boxtimes \cW\to \cD$, we can restrict to a $\cC$-balanced functor $\rF'=\rF\iota\colon \cV\otimes \cW\to \cD$ which, under equivalence, corresponds to a functor $\rG'\colon \cV\otimes_{\cC} \cW\to \cD$ such that $\rF'=\rG'\rT'$. Such a functor extends to a functor $\rG\colon \cV\boxtimes_{\cC} \cW\to \cD$ such that $\rG'=\rG\iota_{\cC}$, and hence $\rF'=\rG\iota_{\cC}\rT'=\rG\rT\iota$, and thus $\rF\cong \rG\rT$. This way, we obtain an equivalence 
\begin{align*}
\Fun^c(\cV\boxtimes_{\cC}\cW,\cD)\simeq \Fun^{\bal{\cC}}(\cV\boxtimes\cW,\cD)
\end{align*}
induced by the functor $\rT$. Since $\cV\boxtimes_{\cC} \cW$ is defined using two universal properties which satisfy naturality, it is natural in both components $\cV$ and $\cW$.

Note that given an equivalence, after choosing an inverse equivalence with the required natural isomorphisms, these can be altered to obtain an adjoint equivalence \cite{Mac}*{Section IV.4}.
\end{proof}


%
%
%
%


\bibliography{biblio2}
\bibliographystyle{amsrefs}

\end{document}